\newcolumntype{F}{>{$}c<{\hspace{-0.9ex}$}}
\newcolumntype{:}{>{$}m{0.8ex}<{$}}
\newcolumntype{R}{>{$}r<{$}}
\newcolumntype{C}{>{$}c<{$}}
\newcolumntype{L}{>{$}l<{$}}
\newcolumntype{N}{@{}>{$}l<{$}}
\newlength\horspace
\newcommand{\h}[1][1.0]{\hspace*{#1\horspace}}
\newlength\verspace
\renewcommand{\v}[1][1.0]{\vspace*{#1\verspace}\xspace}
\tikzset{iso/.style={draw=none,every to/.append style={edge node={node [sloped, allow upside down, auto=false]{$\cong$}}}}}
\tikzset{adjunction/.style={draw=none,every to/.append style={edge node={node [sloped, allow upside down, auto=false]{$\dashv$}}}}}
\tikzset{simeq/.style={draw=none,every to/.append style={edge node={node [sloped, allow upside down, auto=false]{$\simeq$}}}}}
\tikzset{simeqS/.style={draw=none,every to/.append style={edge node={node [sloped, allow upside down, auto=false]{$\raisebox{0.8em}{$\simeq$}$}}}}}
\tikzset{aiso/.style={simeqS,preaction={draw,->}}}
\tikzset{proarrowS/.style={draw=none,every to/.append style={edge node={node [sloped, allow upside down, auto=false]{\raisebox{1.4pt}{\small$\shortmid$}}}}}}
\tikzset{proarrow/.style={proarrowS,preaction={draw,->}}}
\tikzset{dotdot/.style={dash pattern=on 0.25ex off 0.2ex, dash phase=0ex}}
\tikzset{RightA/.style={double distance=3.5pt,>={Implies},->},%
	triple/.style={-,preaction={draw,RightA}},%
	quadruple/.style={preaction={draw,RightA,shorten >=0pt},shorten >=1pt,-,double,double distance=0.2pt}}
\tikzset{Right/.style={double distance=1.7pt,>={Implies},->}}
\tikzset{simeqSRight/.style={draw=none,every to/.append style={edge node={node [sloped, allow upside down, auto=false]{$\raisebox{-1em}{\rotatebox{180}{$\simeq$}}$}}}}}
\tikzset{twoiso/.style={simeqSRight,preaction={draw,Right}}}
\theoremstyle{plain} 
\newtheorem{theorem}{Theorem}[section]
\newtheorem{lemma}[theorem]{Lemma}
\newtheorem{proposition}[theorem]{Proposition}
\newtheorem{corollary}[theorem]{Corollary}
\theoremstyle{definition}
\newtheorem{definition}[theorem]{Definition}
\newtheorem{remark}[theorem]{Remark}
\newtheorem{example}[theorem]{Example}
\newtheorem{construction}[theorem]{Construction}
\def\nameit#1{\textrm{#1}~}
\def\thex{\nameit{Theorem}}
\def\prox{\nameit{Proposition}}
\def\corx{\nameit{Corollary}}
\def\lemx{\nameit{Lemma}}
\def\defx{\nameit{Definition}}
\def\remx{\nameit{Remark}}
\def\conx{\nameit{Construction}}
\def\dfn#1{{\itshape #1}}
\def\predfn#1{{\itshape #1}}
\newcommand{\refs}[1]{\textup{(}\ref{#1}\textup{)}}
\NewDocumentEnvironment{cd}{s O{6} O{6} b}{%
	\IfBooleanF{#1}{\begin{equation*}}\begin{tikzcd}[row sep=#2ex,column sep=#3ex,ampersand replacement=\&]
			#4
		\end{tikzcd}\IfBooleanF{#1}{\end{equation*}}\ignorespacesafterend}{}
\newenvironment{fun}{\[\begin{tabular}{F:RCL}}{\end{tabular}\]\ignorespacesafterend}
\newenvironment{eqD*}{\begin{equation*}}{\end{equation*}\ignorespacesafterend}
\newcommand{\C}{\mathbb{C}}
\newcommand{\D}{\mathbb{D}}
\newcommand{\T}{\mathcal{T}}
\newcommand{\F}{\mathcal{F}}
\newcommand{\0}{\mathbf{0}}
\renewcommand{\1}{\mathbf{1}}
\newcommand{\I}{\mathcal{I}}
\newcommand{\J}{\mathcal{J}}
\newcommand{\PTors}{\mathbf{PTor}}
\newcommand{\CartPTors}{\mathbf{PTor}_{\operatorname{R}}}
\newcommand{\SCartPTors}{\mathbf{PTor}_{\operatorname{SR}}}
\newcommand{\CartTors}{\mathbf{Tor}_{\operatorname{R}}}
\newcommand{\Pointcat}{\mathbf{PointCat}}
\newcommand{\Biqpointcat}{\mathbf{BiQPointCat}}
\NewDocumentCommand{\Alg}{t+ t' m}{
	\ensuremath{\IfBooleanT{#1}{\mathbf{Ps}\mbox{-}}{#3}\mbox{-}\mathbf{\IfBooleanT{#2}{Co}Alg}}
}
\def\:{\colon}
\def\c{\circ}
\newcommand{\iso}{\cong}
\def\phi{\varphi}
\newcommand{\unit}{\operatorname{unit}}
\newcommand{\counit}{\operatorname{counit}}
\newcommand{\fami}[2]{\left\{{#1}\right\}_{#2}}
\newcommand{\cont}{\subseteq}
\newcommand{\id}[1]{\operatorname{id}_{#1}}
\newcommand{\Id}[1]{\operatorname{Id}_{#1}}
\newcommand{\x}[1][]{\times_{#1}}
\newcommand{\too}{\longrightarrow}
\newcommand{\mto}{\mapsto}
\newcommand{\aar}[2][]{\xrightarrow[#1]{#2}}
\newcommand{\aR}[2][]{%
	\ext@arrow 0055{\Rightarrowfill@}{#1}{#2}}
\def\xLongrightarrowfill@{\arrowfill@\Relbar\Relbar\Longrightarrow}
\newcommand{\am}[2][]{%
	\ext@arrow 0395\xmapstofill@{#1}{#2}}
\def\xlongmapstofill@{\arrowfill@\relbar\relbar\longmapsto}
\def\xlongrightarrowfill@{\arrowfill@\relbar\relbar\longrightarrow}
\newcommand{\aarr}[2][]{%
	\ext@arrow 0099\xlongrightarrowfill@{#1}{#2}}
\newcommand{\eqq}{\DOTSB\protect\Relbar\protect\joinrel\Relbar}
\def\xeqqfill@{\arrowfill@\Relbar\Relbar\eqq}
\newcommand{\aeqq}[2][]{%
	\ext@arrow 0099\xeqqfill@{#1}{#2}}
\newcommand{\aiso}[1]{\overset{#1}{\iso}}
\newcommand{\PB}[1]{\arrow[#1,phantom,"\scalebox{1.6}{\color{black}$\lrcorner$}",very near start]}
\newcommand{\scaleu}[2][1.2]{{\scalebox{#1}{$#2$}}}
\NewDocumentCommand{\fib}{O{n} O{2.3} mmm}{%
	\begin{cd}*[#2][5]
		{#3}\ifx#1n{\arrow[d,"{\,\scaleu{#4}}"]}\else{\ifx#1i{\arrow[d,hookrightarrow,"{\,\scaleu{#4}}"]}\else{\ifx#1e{\arrow[d,equal,"{\,\scaleu{#4}}"]}\else{\ifx#1R{\arrow[d,Rightarrow,"{\,\scaleu{#4}}"]}\fi}\fi}\fi}\fi\\
		{#5}\ifx#1o{\arrow[u,"{\,\scaleu{#4}}"']}\fi
	\end{cd}\xspace
}
\newcommand{\Ar}[4][]{\arrow[#2,"{#3}"{#1},""{name=#4, anchor=center}]}
\newcommand{\Ars}[4][]{\arrow[#2,"{#3}"'{#1},""{name=#4, anchor=center}]}
\newcommand{\Arb}[6][]{\arrow[#2,"{#3}"{#1},from=#4,to=#5,shorten <= #6 em, shorten >= #6 em]}
\NewDocumentCommand{\tcv}{s t' O{5} O{38} mmmmm}{
	\FPmul\Mulresulttwo{#3}{#3}%
	\FPmul\Mulresult{0.0026}{\Mulresulttwo}%
	\IfBooleanTF{#1}{\begin{cd}*}{\begin{cd}}[#3][#3]
			{#5}\IfBooleanTF{#2}{\Ars{d,leftarrow,bend right=#4}{#7}{A}\Ar{d,leftarrow,bend left=#4}{#8}{B}}{\Ars{d,bend right=#4}{#7}{A}\Ar{d,bend left=#4}{#8}{B}}\\{#6}
			\Arb{Rightarrow}{#9}{A}{B}{\Mulresult}
		\end{cd}}
\NewDocumentCommand{\sq}{s O{n} O{6} O{6} O{} O{2.7} O{2.2} O{0.5} O{n}}{%
	\def\foosq##1##2##3##4##5##6##7##8{%
		\IfBooleanTF{#1}{\begin{cd}*}{\begin{cd}}[#3][#4]
				{##1}\ifx#2p{\PB{rd}}\fi\arrow[r,"{##5}"]\ifx#9l{\arrow[d,equal,"{##6}"']}\else{\arrow[d,"{##6}"']}\fi\&{##2}\ifx#9r{\arrow[d,equal,"{##7}"]}\else{\arrow[d,"{##7}"]}\fi\ifx#2l{\arrow[ld,Rightarrow,shorten <=#6ex,shorten >=#7ex,"{#5}"{pos=#8}]}\fi \ifx#2i{\arrow[ld,twoiso,shorten <=#6ex,shorten >=#7ex,"{#5}"{pos=#8}]}\fi\\
				{##3}\ifx#9d{\arrow[r,equal,"{##8}"']}\else{\arrow[r,"{##8}"']}\fi\ifx#2o{\arrow[ur,Rightarrow,shorten <=#6ex,shorten >=#7ex,"{#5}"{pos=#8}]}\fi\&{##4}
		\end{cd}}%
		\foosq}
\begin{document}

\title{Rectangular torsion theories}

\author{Elena Caviglia}
\address{(Elena Caviglia) [1] Department of Mathematical Sciences, Stellenbosch University, South Africa. [2]  National Institute for Theoretical and Computational Sciences (NITheCS), Stellenbosch, South Africa.}
\email{elena.caviglia@outlook.com}

\author{Zurab Janelidze}
\address{(Zurab Janelidze) [1] Department of Mathematical Sciences, Stellenbosch University, South Africa. [2]  National Institute for Theoretical and Computational Sciences (NITheCS), Stellenbosch, South Africa.}
\email{zurab@sun.ac.za}

\author{Luca Mesiti}
\address{(Luca Mesiti) Department of Mathematics, University of KwaZulu-Natal, South Africa.}
\email{luca.mesiti@outlook.com}

\subjclass{18E40
}

\begin{abstract} 
In this paper we introduce and study \emph{rectangular torsion theories}, i.e.\ those torsion theories $(\C,\T,\F)$ with $\C$ a pointed category, 
where the canonical functor $\C\to \mathcal{T}\times\mathcal{F}$ is an equivalence of categories. In particular, we show that these are precisely the internal rectangular bands in the $2$-category of pointed categories.

\end{abstract}

\maketitle

\setcounter{tocdepth}{1}
\tableofcontents

\section{Introduction}

A \predfn{pretorsion theory} in the sense of \cite{FFG21} is a triple $(\C,\T,\F)$, where $\C$ is a category and $(\T,\F)$ is a pair of full replete subcategories of $\mathcal{C}$, such that for the ideal of morphisms that factor through objects in the intersection $\mathcal{Z}=\mathcal{T}\cap\mathcal{F}$, the following conditions hold:
\begin{itemize}
    \item[(T1)] Every morphism in $\C$ from an object in $\mathcal{T}$ to an object in $\mathcal{F}$ is a \emph{null morphism}, i.e., factors through an object in $\mathcal{Z}$.

    \item[(T2)] Every object $X$ of $\mathcal{C}$ sits in the middle of a short-exact sequence from an object in $\T$ to an object in $\F$, i.e., for every $X$ there is a sequence of morphisms
    $$T\aar{\ell^X} X\aar{r^X} F$$
    such that $T\in \T$, $F\in\F$, $\ell^X$ is a kernel of $r$ and $r^X$ is a cokernel of $\ell$. 
\end{itemize}
Here kernels and cokernels are relative to the ideal $\mathcal{N}$ of morphisms that factor through an object in $\mathcal{Z}$. These are by now well-established notions (see \cite{FFG21} and the references there), so we do not recall them here. 

The notion of a pretorsion theory is a vast generalisation of the classical notion of torsion theory in an abelian category introduced in \cite{Dic66}. There are various intermediate generalisations in the literature --- see \cite{FFG21,GJ20} and the references there. In \cite{FFG21}, it is shown that the notion of a pretorsion theory is highly versatile in the sense that, on one hand, many properties can be deduced from the simple definition, and on the other hand, there is a large variety of examples. Of particular interest to us is the special case of a pretorsion theory where $\mathcal{Z}$ is the class of zero objects in a pointed category --- we call such pretorsion theory a \emph{torsion theory} in this paper (following \cite{JT07}, where the pointed category is additionally assumed to have all kernels and cokernels).

In this paper we explore a special class of (pre)torsion theories that are analogous, in a precise sense to be explained below, to \emph{rectangular bands} in semigroup theory \cite{CliPre1964}, i.e., idempotent semigroups satisfying $xyz=xz$. It is not difficult to show that rectangular bands are algebras for a monad on $\mathbf{Set}$ given by $X\mapsto X\times X$. In fact, the proof of this is Yoneda invariant, and so internal rectangular bands in any category with squares (i.e., where $X\times X$ exists for each object $X$), are the same as algebras over a similar monad. This suggests to define internal rectangular bands in a $2$-category (having squares of objects) as pseudo-algebras over the corresponding $2$-monad.

We show (Theorem~\ref{teormonadrestricted}) that there is a $2$-equivalence between the $2$-category of internal rectangular bands in the (large) $2$-category of pointed categories and the $2$-category of torsion theories $(\C,\T,\F)$ where the canonical functor $\C\to\T\times\F$ is an equivalence of categories. We call such torsion theories \emph{rectangular torsion theories}. The proof leads to yet another description of rectangular torsion theories (Theorem~\ref{theorcharactcartpointed}): they are precisely those torsion theories that are equivalent to products, in the $2$-category of torsion theories, of torsion theories of the form $(\T,\T,\0)$ and of the form $(\F,\0,\F)$, where in each case $\0$ is the full subcategory of the base category consisting of all zero objects. This is analogous to the well known fact about rectangular bands that they are precisely the products of left-zero bands (those given by first projection, i.e., $xy=x$) and right-zero bands ($xy=y$). A closely related fact is that the category of non-empty rectangular bands is equivalent to the rectangular square of the category of non-empty sets. An analogous result can be established in our case as well: the $2$-category of rectangular torsion theories is equivalent to the rectangular square of the $2$-category of pointed categories. 

Thus, up to an equivalence of categories, a rectangular torsion theory is given by a canonical torsion theory on the rectangular product $\mathbb{C}\times \mathbb{D}$ of two pointed categories. This torsion theory is as natural as it is easy to guess what it should be: $\mathcal{T}=\mathbb{C}\times\mathbf{0}_\mathbb{D}$ and $\mathcal{F}=\mathbf{0}_{\mathbb{C}}\times\mathbb{D}$, where $\mathbf{0}_\mathbb{X}$ denotes the full subcategory of $\mathbb{X}$ consisting of zero objects (one could also swap the roles of $\mathcal{T}$ and $\mathcal{F}$). For this torsion theory, we have the following.
\begin{itemize}
\item The axiom (T1) says that every morphism $(C,0)\to (0,D)$ is a null morphism.

\item The short-exact sequences for (T2) can be given by
$$(C,0)\aar{(\id{C},!)}(C,D)\aar{(!,\id{D})} (0,D).$$

\item Rectangularity is expressed by the obvious equivalence $$\mathbb{C}\times\mathbb{D}\approx (\mathbb{C}\times \mathbf{0}_{\mathbb{D}})\times (\mathbf{0}_{\mathbb{C}}\times\mathbb{D}).$$
\end{itemize}
We therefore see that similarly to rectangular bands, rectangular torsion theories are very easy to construct! Actually, as it turns out, it is not the construction, but the detection of a rectangular torsion theory that proves to be really interesting, as we demonstrate in the case of the following one particular type of torsion theories.

Consider a pointed category $\mathbb{X}$ having finite limits and a class $\mathcal{E}$ of epimorphisms in $\mathbb{X}$ that contains all isomorphisms and all morphisms to a zero object (both of these types of morphisms are clearly epimorphisms). Let us view $\mathcal{E}$ as a full subcategory of the arrow category of $\mathbb{C}$. Let $\mathcal{T}$ denote its (full) subcategory consisting of isomorphisms and let $\mathcal{F}$ denote its (full) subcategory consisting of morphisms to a zero object. We establish the following.
\begin{itemize}
\item $(\mathcal{E},\mathcal{T},\mathcal{F})$ is a torsion theory if and only if every element of $\mathcal{E}$ is a normal epimorphism in $\mathbb{X}$. It is a rectangular torsion theory if and only if every element of $\mathcal{E}$ is a normal product projection in $\mathbb{X}$. See Theorem~\ref{TheD}.

\item Consequently, when $\mathcal{E}$ is the class of product projection in $\mathbb{X}$, the triple $(\mathcal{E},\mathcal{T},\mathcal{F})$ is a torsion theory if and only if it is a rectangular torsion theory and if and only if the isomorphism theorem $X\times Y/X\approx Y$ holds in $\mathbb{X}$ (i.e., $\mathbb{X}$ has normal projections in the sense of \cite{Janelidze03}). See Theorem~\ref{TheE}.

\item When $\mathcal{E}$ is the class of regular epimorphisms in a regular category $\mathbb{X}$, the triple $(\mathcal{E},\mathcal{T},\mathcal{F})$ is a torsion theory if and only if $\mathbb{X}$ is a normal category in the sense of \cite{Janelidze10} (see Theorem~\ref{TheF}). When $\mathbb{X}$ is normal, the triple $(\mathcal{E},\mathcal{T},\mathcal{F})$ is a rectangular torsion theory if and only if every regular epimorphism is a product projection (see Theorem~\ref{TheG}).
\end{itemize}
The property of a regular category to have all of its regular epimorphisms product projections seems to be a new property in categorical algebra. Examples of such normal categories are categories of vector spaces over a fixed field, as well as the dual of the category of pointed objects in any topos.

While the final goal of our paper is the study of rectangular torsion theories in the pointed case, since this is where the interesting examples seem to be found, we establish different parts of our theory in different weaker contexts. Some of our results deal with exploration of rectangularity for pretorsion theories in the context of general categories (see Section~\ref{SecA}). Before specializing to pointed categories, the $2$-monadicity result is first established (Theorem~\ref{teormonad}) for categories having an initial object $0$ and a terminal object $1$, with the unique morphism $0\to 1$ being both an epimorphism and a monomorphism (we call such categories \emph{bi-quasi-pointed}, as they are essentially dualized versions of quasi-pointed categories in the sense of D.~Bourn \cite{Bourn01}).

\subsection*{Notations}

We will always assume pretorsion theories to be equipped with a choice of a short exact sequence $T^X\aar{\ell^X} X\aar{r^X} F^X$ for every object $X$ of $\C$, denoted in this way. This also induces choices of morphisms $h^T$ and $h^F$ making the following diagram commute
\begin{cd}[4][6]
    T^X\ar[d,"h^T"']\ar[r,"\ell^X"] \& X \ar[d,"h"]\ar[r,"r^X"] \& F^X\ar[d,"h^F"]\\
    T^Y\ar[r,"\ell^Y"'] \& Y \ar[r,"r^Y"'] \& F^Y
\end{cd}
for every $h\:X\to Y$, by uniqueness of $h^T$ and $h^F$.

Notice that if $n\:X\to Y$ is a null morphism, then $\id{X}$ is a kernel of $n$ and $\id{Y}$ is a cokernel of $n$. So given $T\in \T$, we have that $T^T\iso T$, since $r^T\:T\to F^T$ is a null morphism by (T1).

We will thus always choose $T^T=T$ and $\ell^T=\id{}$ for every $T\in \T$. Analogously, we will always choose $F^F=F$ and $r^F=\id{}$ for every $F\in \F$. In particular, for every $Z\in \T\cap\F$, we will choose $\ell^Z=\id{Z}$ and $r^Z=\id{Z}$.

\section{Rectangular pretorsion theories}\label{SecA}

In this section, we introduce \predfn{rectangular pretorsion theories}. Unlike in the pointed case, we do not have a monadic description of these pretorsion theories. We nevertheless are able to characterize them using products of pretorsion theories. This characterization will serve as a stepping stone for the monadic description of rectangular torsion theories obtained in the next section. 

Let $\C$ be a category having a terminal object, and let $\1$ denote the subcategory of terminal objects in $\C$. Let $\D$ be a category having an initial object, and let $\0$ denote the subcategory on initial objects in $\D$. We explore what it takes for $(\C\x\D,\mathbb{C}\times \mathbf{0},\mathbf{1}\times\mathbb{D})$ to be a pretorsion theory. Note that:
\begin{itemize}
    \item $\mathbb{C}\times \mathbf{0}$ and $\mathbf{1}\times\mathbb{D}$ are each closed under isomorphisms in $\mathbb{C}\times\mathbb{D}$.

    \item The intersection $\mathcal{Z}=(\mathbb{C}\times \mathbf{0})\cap(\mathbf{1}\times\mathbb{D})=\mathbf{1}\times\mathbf{0}$ consists of all $(1,0)$, where $1$ is a terminal object in $\mathbb{C}$ and $0$ is an initial object in $\mathbb{D}$.

    \item Therefore, a morphism $(X,0)\to (1,Y)$ with $(X,0)\in\mathbb{C}\times\mathbf{0}$ and $(1,Y)\in\mathbf{1}\times\mathbb{D}$ is always null (i.e., it factorizes through an object in $\mathcal{Z}$, namely, through $(1,0)$).
\end{itemize}
So, for $(\mathbb{C}\times \mathbf{0},\mathbf{1}\times\mathbb{D})$ to be a pretorsion theory, it is necessary and sufficient that every object $(C,D)$ is part of a short exact sequence
\begin{equation}\label{EquA}\xymatrix@=30pt{(X,0)\ar[r]^-{(m_1,m_2)} & (C,D)\ar[r]^-{(e_1,e_2)} & (1,Y)}\end{equation}
where $0\in\mathbf{0}$ and $1\in\mathbf{1}$.
We note that (\ref{EquA}) is a short exact sequence if and only if the following conditions hold:
\begin{itemize}
\item[(E1)] $(m_1,m_2)$ is a monomorphism. 

\item[(E2)] $(e_1,e_2)$ is an epimorphism. 

\item[(E3)] The composite $(e_1,e_2)\circ (m_1,m_2)$ is null; and indeed it is, as it factors through $(0,1)\in\mathcal{Z}$.

\item[(E4)] Whenever a composite $(e_1,e_2)\circ(u_1,u_2)$ is null, we have $(u_1,u_2)=(m_1,m_2)\circ (u'_1,u'_2)$ for some $u'_1,u'_2$.

\item[(E5)] Whenever a composite $(v_1,v_2)\circ(m_1,m_2)$ is null, we have $(v_1,v_2)=(v'_1,v'_2)\circ (e_1,e_2)$ for some $v'_1,v'_2$.
\end{itemize}

\begin{lemma}\label{LemA}
The diagram (\ref{EquA}) in $\mathbb{C}\times\mathbb{D}$, where $0\in\mathbf{0}$ and $1\in\mathbf{1}$, is a short exact sequence if and only if $m_1$ is an isomorphism, $m_2$ is a monomorphism, $e_1$ is an epimorphism and $e_2$ is an isomorphism.     
\end{lemma}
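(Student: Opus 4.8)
The plan is to prove both implications by carefully unwinding conditions (E1)--(E5), using the fact that in a product category $\mathbb{C}\times\mathbb{D}$ a morphism $(f_1,f_2)$ is a monomorphism (resp.\ epimorphism) if and only if each $f_i$ is one, and a composite is null if and only if it factors through $(1,0)$, i.e.\ if and only if the first component factors through a terminal object and the second component factors through an initial object. Since every object of $\mathbf{1}$ is terminal and every object of $\mathbf{0}$ is initial, a morphism in $\mathbb{C}$ ``factors through a terminal object'' is automatic for its codomain only in trivial cases --- what actually matters is that $g\colon A\to B$ in $\mathbb{C}$ is null (in the relevant ideal) precisely when it factors through the terminal object, i.e.\ $g = !_B \circ ?$; equivalently, when $B$ is terminal, $g$ is automatically null, and dually in $\mathbb{D}$ a morphism is null as soon as its domain is initial.

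For the ``if'' direction, assume $m_1$ is an isomorphism, $m_2$ a monomorphism, $e_1$ an epimorphism, $e_2$ an isomorphism. Then (E1) and (E2) are immediate, and (E3) is already noted in the excerpt. For (E4): given $(u_1,u_2)\colon(W_1,W_2)\to(C,D)$ with $(e_1,e_2)\circ(u_1,u_2)$ null, the second-component composite $e_2\circ u_2$ is null in $\mathbb{D}$, so (since $e_2$ is an isomorphism) $u_2$ is null, i.e.\ $u_2$ factors through an initial object; because $m_2$'s domain is that same type of object and $0\in\mathbf{0}$ is initial, $u_2$ factors as $m_2\circ u_2'$ for a (unique) $u_2'$; meanwhile set $u_1' := m_1^{-1}\circ u_1$, so $(u_1,u_2)=(m_1,m_2)\circ(u_1',u_2')$. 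Condition (E5) is dual: given $(v_1,v_2)\circ(m_1,m_2)$ null, the first component $v_1\circ m_1$ is null in $\mathbb{C}$, hence $v_1$ is null since $m_1$ is an isomorphism, hence $v_1$ factors through a terminal object; as $e_1$ has terminal codomain $1\in\mathbf{1}$, this factorization goes through $e_1$, giving $v_1 = v_1'\circ e_1$, and set $v_2' := v_2\circ e_2^{-1}$.

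For the ``only if'' direction, assume (\ref{EquA}) is a short exact sequence. From (E1), $m_1$ is a monomorphism and $m_2$ is a monomorphism; from (E2), $e_1$ and $e_2$ are epimorphisms. It remains to upgrade $m_1$ and $e_2$ to isomorphisms. To see $e_2$ is an isomorphism: apply (E4) to a suitably chosen test morphism. Specifically, consider $(u_1,u_2) = (0,\id{D})\colon (1',D)\to(C,D)$ where $1'$ is terminal --- wait, one needs the domain in $\mathbb{C}$ to map to $C$; better, take any object and use that the composite with $(e_1,e_2)$ will be null precisely when forced. The clean argument: since $(m_1,m_2)\colon(X,0)\to(C,D)$ with $0$ initial, the composite $(e_1,e_2)\circ(m_1,m_2)$ being a cokernel-type universal null map forces, via (E5) applied to $(e_1, e_2)$ paired against itself and via (E4), that $e_2$ must be invertible and $m_1$ must be invertible. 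Concretely, I expect the main obstacle is exactly this step: showing $m_1$ is epic (hence iso) and $e_2$ is monic (hence iso). For $m_1$ epic: take $v_1,v_1'\colon C\to A$ in $\mathbb{C}$ with $v_1 m_1 = v_1' m_1$; then $(v_1 - $ trick doesn't work additively, so instead use (E5): the morphism $(v_1,0)\colon(C,D)\to(A,0')$ has $(v_1,0)\circ(m_1,m_2)$ null (second component has initial codomain), so $(v_1,0) = (v_1',v_2')\circ(e_1,e_2)$, giving $v_1 = v_1'\circ e_1$; apply this with $v_1 = \id{C}$ to get a section-type factorization of $\id{C}$ through $e_1$, and combine with (E4) applied to $(0,\id{D})$-type maps to see $e_2$ is split; then a symmetric argument using that $X$ must be terminal (forced by (E4) with the universal null composite) shows $m_1$ is both monic and epic. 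I would organize this as: (a) show $X\in\mathbf{1}$ and $Y\in\mathbf{0}$ by testing $\id{}$ against (E4)/(E5); (b) deduce $m_1$ iso since $m_1\colon X\to C$ with $X$ terminal and $m_1$ monic forces $C$ terminal only if... --- no: rather, once $X$ is terminal, $(e_1,e_2)\circ(m_1,m_2)$ null plus (E4) shows every $(u_1,u_2)$ with null second component factors through $(m_1,m_2)$, and applying this to $(\id{C},0)$ shows $m_1$ is split epic, hence iso; dually $e_2$ is split monic, hence iso. This (a)--(b) reduction to the universal properties is the crux; the rest is formal.
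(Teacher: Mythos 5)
Your ``if'' direction is correct and is essentially the paper's argument: reduce to the componentwise description of null morphisms (first component factors through a terminal object, second through an initial one), use that $e_1$ has terminal codomain and $m_2$ has initial domain to produce the required factorisations for (E4) and (E5), and handle the invertible components by composing with inverses. The paper streamlines this by assuming WLOG that $m_1$ and $e_2$ are identities, but the content is the same.

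The ``only if'' direction, however, as you finally organise it, rests on a false intermediate claim. Step (a) asserts that $X\in\mathbf{1}$ and $Y\in\mathbf{0}$, and step (b) is explicitly conditioned on it (``once $X$ is terminal\ldots''). This cannot be right: the lemma's conclusion is that $m_1\colon X\to C$ is an isomorphism, so $X\cong C$, and $C$ is an arbitrary object of $\mathbb{C}$ (in Example~\ref{ExaA}, $X=C$ is a general object of $\mathbf{Set}^{\mathsf{op}}$). Fortunately, your step (b) never actually uses the terminality of $X$: the composite $(e_1,e_2)\circ(\id{C},\,!)$, with $!$ the unique map from an initial object of $\mathbb{D}$ to $D$, is null because $e_1$ has terminal \emph{codomain} and $!$ has initial \emph{domain} --- no hypothesis on $X$ enters. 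Applying (E4) to this morphism yields $m_1 u_1'=\id{C}$, so $m_1$ is a split epimorphism, hence (being a monomorphism by (E1)) an isomorphism; the dual argument via (E5) applied to $(\,!,\id{D})$ (note: (E5), not (E4) as you write mid-paragraph) splits $e_2$. This is precisely the paper's proof. So the fix is simply to delete step (a) and the clause ``once $X$ is terminal''; as written, though, the proof asserts a statement that is false and presents it as part of the crux, and the exploratory false starts should be cut from a final write-up.
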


\begin{proof} First, we make some general observations:
\begin{itemize}
    \item[(G)] $(f_1,f_2)$ is a monomorphism/epimorphism if and only if so are each $f_i$.
\end{itemize}
Now, suppose (\ref{EquA}) is a short exact sequence. (G) together with (E1-2) leave us to show that $m_1$ is a split epimorphism and symmetrically, $e_2$ is a split monomorphism. Thanks to commutativity of the right-hand side square below and (E4), we get a commutative triangle on the left:
$$
\xymatrix@=30pt{(X,0)\ar[r]^-{(m_1,m_2)} & (C,D)\ar[r]^-{(e_1,e_2)} & (1,Y)\\ & (C,0)\ar[ul]^-{(u'_1,\id{0})}\ar[u]_-{(\id{C},m_2)}\ar[r]_-{(e_1,\id{0})} & (1,0)\ar[u]_-{(\id{1},e_2m_2)}} $$
Then $m_1u'_1=\id{C}$, proving that $m_1$ is a split epimorphism. That $e_2$ is a split monomorphism can be proved similarly (with a dual-symmetric argument).

For the converse, suppose all the assumptions stated for $m_i,e_i$ in the lemma hold. It is not difficult to see that since $m_1$ and $e_2$ are isomorphisms, without loss of generality we can assume that they are actually identity morphisms. So the sequence (\ref{EquA}) becomes the top line of the following diagram:
$$
\xymatrix@=30pt{(C,0)\ar[r]^-{(\id{C},m_2)} & (C,D)\ar[r]^-{(e_1,\id{D})} & (1,D)\\ & (U_1,U_2)\ar[u]_-{(u_1,u_2)}\ar[r]_-{(e_1u_1,d)}\ar@{-->}[ul]^-{(u_1,d)} & (1,0)\ar[u]_-{(\id{1},m_2)}} $$
(E1-2) hold by (G) and we already know (E3) holds. To prove (E4), suppose $(e_1,\id{D})\circ (u_1,u_2)$ factors through an object in $\mathcal{Z}$. Without loss of generality, we can assume that this object is $(1,0)$. This gives the rest of the above diagram of solid arrows. Now, $(u_1,u_2)$ indeed factors through $(\id{C},m_2)$ as required in (E3): look at the dashed arrow in the diagram above. The proof of (E5) is similar.
\end{proof}

As a consequence, we obtain:

\begin{theorem}\label{TheA}
$(\C\x \D,\mathbb{C}\times \mathbf{0},\mathbf{1}\times\mathbb{D})$ is a pretorsion theory if and only if every morphism $C\to 1$ in $\mathbb{C}$ with $1\in \1$ is an epimorphism and every morphism $0\to D$ in $\mathbb{D}$ with $0\in \0$ is a monomorphism. That is, if and only if $\D$ is quasi-pointed and $\C$ satisfies the dual condition.
\end{theorem}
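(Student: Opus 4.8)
The plan is to read off the statement from Lemma~\ref{LemA} together with the reduction already carried out above, namely that $(\C\x\D,\mathbb{C}\times\mathbf 0,\mathbf 1\times\mathbb{D})$ is a pretorsion theory if and only if every object $(C,D)$ of $\C\x\D$ sits in a short exact sequence of the form \refs{EquA}. So the entire content becomes: determine, for a fixed $(C,D)$, when such a sequence exists, and then quantify over all objects.

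First I would fix $(C,D)$ and apply Lemma~\ref{LemA}: a diagram \refs{EquA} based at $(C,D)$ is short exact precisely when $m_1$ is an isomorphism, $m_2$ a monomorphism, $e_1$ an epimorphism, and $e_2$ an isomorphism. Since $m_1\:X\to C$ and $e_2\:D\to Y$ are forced to be isomorphisms, such a sequence exists if and only if one exists with $X=C$, $Y=D$, $m_1=\id C$, $e_2=\id D$ (compose with the isomorphisms); and then, as $0$ is initial in $\D$ and $1$ is terminal in $\C$, the remaining components $m_2\:0\to D$ and $e_1\:C\to 1$ are the unique such morphisms, with no further freedom. Hence $(C,D)$ admits a short exact sequence of the form \refs{EquA} if and only if the unique morphism $C\to 1$ is an epimorphism and the unique morphism $0\to D$ is a monomorphism. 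Quantifying over all objects, the triple is a pretorsion theory if and only if every morphism $C\to 1$ with $1\in\mathbf 1$ is an epimorphism and every morphism $0\to D$ with $0\in\mathbf 0$ is a monomorphism. (One should note here that, since $\mathbf 0$ and $\mathbf 1$ consist of \emph{all} initial, resp.\ terminal, objects, these conditions do not depend on the chosen $0$ and $1$: any two are isomorphic, and precomposing/postcomposing with the isomorphism preserves being a monomorphism/epimorphism.)

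It then remains to recognise these two conditions as quasi-pointedness of $\D$ and its dual for $\C$. For $\D$: if the unique morphism $0\to 1$ is a monomorphism, then for any object $A$ any two morphisms $A\to 0$ become equal after composing with $0\to 1$ (both composites being the unique morphism $A\to 1$), so there is at most one morphism $A\to 0$; consequently $0\to D$ is a monomorphism for every $D$. The converse is immediate upon taking $D$ terminal. Thus ``every $0\to D$ is a monomorphism'' is equivalent to ``$0\to 1$ is a monomorphism'', i.e.\ to $\D$ being quasi-pointed, and dually for $\C$. Given Lemma~\ref{LemA}, the argument is essentially bookkeeping; the only genuine (and minor) point is this last rephrasing of ``monomorphism into every object'' as ``monomorphism into the terminal object'' (and its dual), so that is where I would be slightly careful.
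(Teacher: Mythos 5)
Your proposal is correct and follows essentially the same route as the paper: reduce to the existence of a short exact sequence of the form \refs{EquA} for each object, apply Lemma~\ref{LemA}, and normalise $m_1$ and $e_2$ to identities so that $m_2$ and $e_1$ become the unique morphisms out of the initial and into the terminal object. Your extra verification that these conditions amount to quasi-pointedness of $\D$ and its dual for $\C$ is a welcome detail that the paper defers to the remark immediately following the theorem.
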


\begin{proof} Suppose $(\mathbb{C}\times \mathbf{0},\mathbf{1}\times\mathbb{D})$ is a pretorsion theory. Then any object $(C,D)$ is part of a short exact sequence, which by Lemma~\ref{LemA} can be written out as
$$\xymatrix@=30pt{(C,0)\ar[r]^-{(\id{C},m_2)} & (C,D)\ar[r]^-{(e_1,\id{D})} & (1,D)}$$
where $m_2\colon C\to D$ is a monomorphism and $e_1\colon C\to 1$ is an epimorphism. To prove the converse, we can apply Lemma~\ref{LemA} again and use the above sequence for each $(C,D)$. Thanks to the discussion at the start of this section, this would conclude the proof.
\end{proof}

So in such pretorsion theory, the ``torsion part'' of an object $(C,D)$ is given by $(C,0)$, whereas the ``torsion-free part'' is given by $(1,D)$, for some fixed choice of $0\in \0$ and $1\in \1$. 

\begin{remark}
The property that every morphism going out from an initial object $0$ is a monomorphism, under the presence of a terminal object $1$, is equivalent to the property that the unique morphism $0\to 1$ is a monomorphism. This is a useful generalisation of pointedness in categorical algebra, first emphasized in \cite{Bourn01} (see also \cite{GJ17}).    
\end{remark}

\begin{example}\label{ExaA}
Let $\mathbb{C}=\mathbf{Set}^\mathsf{op}$ and $\mathbb{D}=\mathbf{Set}$. We can think of objects $(X,Y)$ in $\mathbf{Set}^\mathsf{op}\times \mathbf{Set}$ as sets having two types of elements: \emph{positive} elements (elements of $Y$) and \emph{negative} elements (elements of $X$). To emphasize this intuition, we can write $Y-X$ for $(X,Y)$. Theorem~\ref{TheA} is applicable here, since $\varnothing\to S$ is an injective function, for any set $S$. The torsion objects in the corresponding pretorsion theory are \emph{negative sets}, i.e., sets having only negative elements, while torsion-free objects are \emph{positive sets}, i.e., sets having only positive elements. The short exact sequence that every object $Y-X$ fits into identifies the ``subset'' $\varnothing-X$ of $Y-X$ consisting of negative elements, and the subset $Y-\varnothing$ of $X-Y$ consisting of positive elements:
$$\xymatrix{\varnothing-X\ar[r] & Y-X\ar[r] & Y-\varnothing}$$
\end{example}

As shown in \cite{FFG21}, the torsion and the torsion-free part of an object are, in general, unique (up to canonical isomorphisms). In our case, the reverse is also true: two objects having isomorphic torsion and torsion-free pair of objects will necessarily be isomorphic. Indeed, a much stronger property holds, captured by the following definition.

\begin{definition}
A \dfn{rectangular pretorsion theory} is a pretorsion theory $(\C,\T,\F)$ such that the canonical functor 
\begin{fun}
	\Gamma & \: & \C & \too & \T\x \F \\[1ex]
    && \fib{X}{h}{Y} & \to & \fib{(T^X,F^X)}{(h^T,h^F)}{(T^Y,F^Y)},
\end{fun}
assigning to each $X\in \C$ the pair of its torsion and torsion-free parts,
is an equivalence of categories. See \cite{FFG21} for the constructions of the canonical functors $T\:\C\to \T$ and $F\:\C\to\F$ that induce $\Gamma$.
\end{definition}

\begin{proposition}
    Let $\D$ be a quasi-pointed category and $\C$ be a category satisfying the dual condition. The pretorsion theory $(\C\x \D,\mathbb{C}\times \mathbf{0},\mathbf{1}\times\mathbb{D})$ (of \thex\ref{TheA}) is rectangular.
\end{proposition}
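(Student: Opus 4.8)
The plan is to show that the canonical functor $\Gamma\colon \C\x\D \to (\C\x\0)\x(\1\x\D)$ is an equivalence by exhibiting an explicit pseudo-inverse and checking it is suitably natural, rather than by verifying fully-faithfulness and essential surjectivity from scratch. By the discussion preceding \thex\ref{TheA} and by \lemx\ref{LemA}, the chosen short exact sequence for an object $(C,D)$ can be taken to be
$$\xymatrix@=30pt{(C,0)\ar[r]^-{(\id{C},m_2)} & (C,D)\ar[r]^-{(e_1,\id{D})} & (1,D)}$$
for a fixed choice of initial object $0\in\0$ and terminal object $1\in\1$, where $m_2\colon 0\to D$ is the unique such morphism and $e_1\colon C\to 1$ the unique such morphism. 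Hence on objects the torsion part is $T^{(C,D)}=(C,0)$ and the torsion-free part is $F^{(C,D)}=(1,D)$, so that $\Gamma(C,D)=\big((C,0),(1,D)\big)$, and on a morphism $(h_1,h_2)\colon(C,D)\to(C',D')$ one computes from the naturality square in the Notations paragraph that $\Gamma(h_1,h_2)=\big((h_1,\id{0}),(\id{1},h_2)\big)$.

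Next I would write down the candidate pseudo-inverse $\Delta\colon (\C\x\0)\x(\1\x\D)\to\C\x\D$. On objects it sends a pair $\big((C,0),(1,D)\big)$ to $(C,D)$, and on a morphism $\big((h_1,h_0),(h_1',h_2)\big)$ — where necessarily $h_0$ and $h_1'$ are the unique morphisms between initial, resp.\ terminal, objects — it returns $(h_1,h_2)$. Functoriality of $\Delta$ is immediate. Then $\Gamma\Delta$ sends $\big((C,0),(1,D)\big)$ to $\big((C,0'),(1',D)\big)$ where $0',1'$ are the \emph{fixed} chosen objects; since $0\to 0'$ and $1\to 1'$ are (unique) isomorphisms in $\0$ and $\1$ respectively, these assemble into a natural isomorphism $\Gamma\Delta\cong\Id{}$. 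Conversely $\Delta\Gamma$ is literally the identity on $\C\x\D$ on objects (it sends $(C,D)$ to $(C,D)$) and on morphisms, so $\Delta\Gamma=\Id{\C\x\D}$ on the nose. This exhibits $\Gamma$ as an equivalence.

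The only points requiring a little care, and the place I expect the main (though still minor) obstacle, are the bookkeeping around the choices of torsion/torsion-free parts and the identifications of hom-sets in $\0$ and $\1$: one must invoke the convention fixed in the Notations paragraph (that $T^T=T$, $\ell^T=\id{}$ for $T\in\T$ and dually for $\F$) to know that $T^{(C,0)}=(C,0)$ and $F^{(1,D)}=(1,D)$, so that $\Gamma$ restricted to $\T$ and to $\F$ behaves as expected and $\Delta$ really does land in the right places; and one must note that any two initial objects of $\D$ (resp.\ terminal objects of $\C$) are connected by a unique isomorphism, which is what makes the components $0\to 0'$, $1\to 1'$ both canonical and natural. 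Once these are spelled out, the verification that $(\Gamma,\Delta)$ form an adjoint equivalence — indeed with $\Delta\Gamma=\Id{}$ strictly — is routine, and the proposition follows.
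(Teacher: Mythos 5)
Your proof is correct and takes essentially the same route as the paper: both compute that the canonical functor is $(C,D)\mapsto\big((C,0),(1,D)\big)$ and then observe it is an equivalence because $\0$ and $\1$ are equivalent to the terminal category (the paper packages this as a product of two equivalences $T'\x F'$, you as an explicit pseudo-inverse with $\Delta\Gamma=\Id{}$ on the nose). The care you take with the fixed choices of $0$ and $1$ and the unique isomorphisms between initial (resp.\ terminal) objects is exactly the right bookkeeping.
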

\begin{proof}
    The canonical functor $\Gamma$ associated to $(\C\x \D,\mathbb{C}\times \mathbf{0},\mathbf{1}\times\mathbb{D})$ is
    \begin{fun}
	\Gamma & \: & \C\x\D & \too & \mathbb{C}\times \mathbf{0}\x \mathbf{1}\times\mathbb{D} \\[1ex]
    && (X,Y) & \to & ((X,0),(1,Y))
\end{fun}
This is clearly an equivalence of categories, as $\Gamma=T'\x F'$ with  $T'\:X\mapsto (X,0)$ and $F'\:Y\mapsto (1,Y)$, and both $T'$ and $F'$ are equivalences.
\end{proof}

Our construction of the pretorsion theory $(\C\x \D,\mathbb{C}\times \mathbf{0},\mathbf{1}\times\mathbb{D})$ of \thex\ref{TheA} can be generalized significantly, as witnessed by the following results.

\begin{lemma}\label{LemmaZ}
For any category $\mathbb{C}$, the triple $(\C,\C,\J)$ is a pretorsion theory if and only if $\J$ is a full replete epireflective subcategory of $\C$. If $\J$ is equivalent to the terminal category, then $(\C,\C,\J)$ is rectangular.
\end{lemma}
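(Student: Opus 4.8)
The plan is to prove the two assertions in turn, and throughout to exploit that $\mathcal{Z}=\C\cap\J=\J$ (since $\J$ is full and replete in $\C$), so that a morphism is null precisely when it factors through an object of $\J$; in particular, every morphism with codomain in $\J$ is automatically null. Since $\T=\C$ contains every object, the conventions of the Notations section fix $T^X=X$ and $\ell^X=\id{X}$ for all $X$, so a short exact sequence through $X$ collapses to a single morphism $r^X\colon X\to F^X$ with $F^X\in\J$ which must be a cokernel of $\id{X}$ relative to the ideal of null morphisms — the requirement that $\id{X}$ be a kernel of $r^X$ holding automatically because $r^X$ is null. (Alternatively, one checks that $\ell^X$ is always forced to be an isomorphism, being monic as a kernel and split epic, as $\id{X}$ factors through the kernel $\ell^X$ of the null morphism $r^X$.)

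For the equivalence I would argue both directions through this reduction. If $\J$ is full, replete and epireflective, with reflector $L$ and unit $\eta$ (each $\eta_X$ an epimorphism), I would take the short exact sequence $X\xrightarrow{\id{X}}X\xrightarrow{\eta_X}LX$: existence of the cokernel factorization of a null map $v$ follows by applying the universal property of the reflection to a factorization of $v$ through an object of $\J$, and uniqueness of the factorization follows from $\eta_X$ being epic; axiom (T1) and the fullness and repleteness of $\C$ and $\J$ are then immediate, so $(\C,\C,\J)$ is a pretorsion theory. Conversely, from a pretorsion theory $(\C,\C,\J)$ I would read off, from the cokernel property of $r^X$, first that $X\mapsto F^X$ with unit $r^X$ is a reflection of $\C$ onto $\J$ — any $g\colon X\to J$ with $J\in\J$ being null, hence factoring uniquely through $r^X$ — and second that $r^X$ is an epimorphism, as any cokernel relative to an ideal is, by the usual chase (if $a r^X=b r^X$, the common composite is null, so the uniqueness clause forces $a=b$). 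Since $\F=\J$ is full and replete by definition of a pretorsion theory, $\J$ is thereby a full replete epireflective subcategory.

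For the final claim, suppose moreover that $\J$ is equivalent to the terminal category $\1$, so that by the first part $(\C,\C,\J)$ is in particular a pretorsion theory. Under the present conventions the canonical torsion functor $T\colon\C\to\T$ is the identity of $\C$, so the canonical functor $\Gamma\colon\C\to\T\times\F=\C\times\J$ satisfies $\mathrm{pr}_1\circ\Gamma=\Id{\C}$ for the first projection $\mathrm{pr}_1$. Since $\J\simeq\1$, this projection $\mathrm{pr}_1\colon\C\times\J\to\C$ is an equivalence of categories — it is essentially surjective because $\J$ is nonempty, and fully faithful because every hom-set of $\J$ is a singleton. As $\mathrm{pr}_1$ is an equivalence with $\mathrm{pr}_1\circ\Gamma=\Id{\C}$, the functor $\Gamma$ is isomorphic to a quasi-inverse of $\mathrm{pr}_1$ and hence is itself an equivalence; therefore $(\C,\C,\J)$ is rectangular.

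The one step I expect to need genuine care — and would write out in full — is the ``only if'' direction of the equivalence: reading the short exact sequence correctly so that $r^X$ is recognized at once as a reflection unit and, being a cokernel relative to the ideal, as an epimorphism. The remaining verifications are routine unwinding of the definitions, and I anticipate no serious obstacle.
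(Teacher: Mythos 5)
Your proof is correct and follows essentially the same route as the paper: the same reduction to showing that $r^X$ is a cokernel of $\id{X}$, the same use of the reflection unit $\eta_X$ for the ``if'' direction, and the same identification of $\pi_1$ as a (pseudo-)inverse of $\Gamma$ for rectangularity. The only difference is that for the ``only if'' direction the paper simply cites \cite[Corollary 3.4]{FFG21}, whereas you supply the (correct) direct argument that $r^X$ is both a reflection unit and an epimorphism.
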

\begin{proof}
    It is known that if we have a pretorsion theory $(\C,\C,\J)$ then $\J$ is a full replete epireflective subcategory of $\C$, see \cite[Corollary 3.4]{FFG21}. Taking the torsion free part of objects of $\C$ gives the epireflection.

    We prove that if $J\:\J\subseteq \C$ is a full replete epireflective subcategory of $\C$ then $(\C,\C,\J)$ is a pretorsion theory. Call $L\:\C\to \J$ the epireflection. Of course both $\C$ and $\J$ are full replete subcategories of $\C$. The ideal of null morphisms is given by $\C\cap \J=\J$, so every morphism landing into an object of $\J$ is null. It remains to show that every object $X\in \C$ sits in the middle of an appropriate short exact sequence. But given $X\in \C$, the epireflection produces an epimorphism $\eta_X\:X\to J(L(X))$, with $L(X)\in \J$. So consider the sequence
    $$X\aeqq{\id{X}} X\aar{\eta_X}J(L(X))$$
    Since $\eta_X$ is null, $\id{X}$ is a kernel of $\eta_X$. We show that $\eta_X$ is the cokernel of $\id{X}$. Given any null morphism $m\:X\to M$, we have that $m$ factors as
    $X\aar{m^1} J(A)\aar{m^2}M$ with $A\in \J$. As $\eta_X$ gives a universal arrow, there exists a unique $v\:L(X)\to A$ such that
    \begin{cd}[4][6]
        X \ar[r,"{\eta_X}"]\ar[rd,bend right,"{m^1}"'] \& J(L(X))\ar[d,"J(v)"] \\
        \& J(A)
    \end{cd}
    Whence $m^2\c J(v)$ is such that $m^2\c J(v)\c \eta_X=m$. Since $\eta_X$ is epi, $m^2\c J(v)$ is the unique morphism with this property. We thus conclude that the sequence above is short exact.

    Now, assume that $\J$ is equivalent to the terminal category, and consider the canonical functor
\begin{fun}
	\Gamma & \: & \C & \too & \C\x \J \\[1ex]
    && \fib{X}{h}{Y} & \to & \fib{(X,L(X))}{(h,L(h))}{(Y,L(Y))}
\end{fun}
Notice that indeed $h^T=h$ and $h^F=L(h)$, by uniqueness of $h^T$ and $h^F$ and naturality of $\eta$. We show that the projection functor $\pi_1\:\C\x \J\to \C$ is a pseudo-inverse of $\Gamma$. Of course $\pi_1\c \Gamma=\Id{}$. Consider now $(X,A)\in \C\x\J$. Since $\J$ is equivalent to the terminal category, there exists an isomorphism $A\iso L(X)$, whence $(X,A)\iso (X,L(X))$. Moreover, such isomorphisms form a natural transformation, again because $\J$ is equivalent to the terminal category. And we conclude that $\Gamma\c \pi_1\iso \Id{}$.
\end{proof}

Dually and symmetrically, we have:

\begin{lemma}\label{LemmaW}
For any category $\mathbb{D}$, the triple $(\D,\I,\D)$ is a pretorsion theory if and only if $\I$ is a full replete monocoreflective subcategory of $\D$. If $\I$ is equivalent to the terminal category, then $(\D,\I,\D)$ is rectangular.
\end{lemma}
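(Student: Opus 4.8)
The plan is to obtain \lemx\ref{LemmaW} as the formal dual of \lemx\ref{LemmaZ}, rather than re-running the argument from scratch. Concretely, one passes to opposite categories: applying \lemx\ref{LemmaZ} to $\C=\D\op$ says that $(\D\op,\D\op,\J)$ is a pretorsion theory if and only if $\J$ is a full replete epireflective subcategory of $\D\op$, and that it is rectangular when $\J$ is equivalent to the terminal category. Now one uses two dualization facts that should be recorded (or recalled from \cite{FFG21}): first, $(\A,\T,\F)$ is a pretorsion theory in $\A$ precisely when $(\A\op,\F\op,\T\op)$ is a pretorsion theory in $\A\op$ (kernels and cokernels relative to the same ideal swap, and (T1) is self-dual); second, the canonical functor $\Gamma$ for $(\A\op,\F\op,\T\op)$ is, up to the swap equivalence $\T\op\x\F\op\simeq\F\op\x\T\op$, the opposite of the canonical functor for $(\A,\T,\F)$, so one of them is an equivalence iff the other is. Setting $\A=\D$, $\I=\J\op$, these two facts turn the statement of \lemx\ref{LemmaZ} for $\D\op$ verbatim into the statement of \lemx\ref{LemmaW} for $\D$, once one also notes that $\J$ is a full replete epireflective subcategory of $\D\op$ exactly when $\I=\J\op$ is a full replete monocoreflective subcategory of $\D$ (an epireflection in $\D\op$ is a monocoreflection in $\D$, its reflector being the opposite of the coreflector, and the unit epimorphism $X\to J(L(X))$ becomes the counit monomorphism $J(L(X))\to X$ in $\D$), and that $\I$ is equivalent to the terminal category iff $\J$ is.

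First I would state explicitly, perhaps as a one-line remark preceding the lemma, the self-duality of the notion of pretorsion theory and of rectangularity under $(\A,\T,\F)\mapsto(\A\op,\F\op,\T\op)$; this is the only conceptual content and it is essentially immediate from the definitions, since a kernel relative to the ideal $\mathcal{N}$ in $\A\op$ is a cokernel relative to $\mathcal{N}$ in $\A$ and vice versa, (T1) is visibly invariant under reversing arrows and swapping $\T$ with $\F$, and the intersection $\mathcal{Z}=\T\cap\F$ is unchanged. Then the body of the proof of \lemx\ref{LemmaW} is just: ``By the above duality, $(\D,\I,\D)$ is a (rectangular) pretorsion theory iff $(\D\op,\D\op,\I\op)$ is a (rectangular) pretorsion theory; by \lemx\ref{LemmaZ} this holds iff $\I\op$ is a full replete epireflective subcategory of $\D\op$, equivalently iff $\I$ is a full replete monocoreflective subcategory of $\D$, and the rectangularity clause follows since $\I\op$ is equivalent to the terminal category iff $\I$ is.''

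Alternatively, if one prefers a self-contained argument, one can simply mirror the proof of \lemx\ref{LemmaZ} with all arrows reversed: the monocoreflector $R\:\D\to\I$ supplies a monomorphism $\eps_Y\:I(R(Y))\to Y$, the short exact sequence is $I(R(Y))\aar{\eps_Y}Y\aeqq{\id{Y}}Y$, one checks $\eps_Y$ is a kernel of $\id{Y}$ (using that null morphisms here are those factoring through $\I$, and the universal property of $\eps_Y$), and $\id{Y}$ is trivially a cokernel; and for rectangularity when $\I\simeq\1$ one shows $\pi_2\:\I\x\D\to\D$ is a pseudo-inverse of $\Gamma\:Y\mapsto(R(Y),Y)$, with the natural isomorphism $\Gamma\c\pi_2\iso\Id{}$ coming from $R(Y)\iso$ any object of $\I$. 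I expect no real obstacle: the only thing requiring a moment of care is making sure the ideal of null morphisms and the relative kernels/cokernels genuinely dualize correctly — i.e.\ that the phrase ``dually and symmetrically'' is load-bearing in exactly the way claimed — but this is standard and is already implicit in the framework of \cite{FFG21}. I would therefore keep the proof to two or three lines invoking \lemx\ref{LemmaZ} and the duality, which is almost certainly what the authors do.
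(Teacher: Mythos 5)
Your proposal is correct and matches the paper exactly: the authors introduce \lemx\ref{LemmaW} with the phrase ``Dually and symmetrically, we have:'' and give no further proof, relying on precisely the duality $(\mathcal{A},\mathcal{T},\mathcal{F})\mapsto(\mathcal{A}\op,\mathcal{F}\op,\mathcal{T}\op)$ that you spell out. Your more explicit verification that the ideal of null morphisms, relative kernels/cokernels, and the canonical functor $\Gamma$ all dualize as claimed is a careful elaboration of what the paper leaves implicit, not a different argument.
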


\begin{corollary}\label{LemD}
Let $\mathbb{C}$ be a category with terminal object and let $\1$ be the subcategory of terminal objects in $\C$. The triple $(\C,\mathbb{C},\mathbf{1})$ is a pretorsion theory if and only if every morphism in $\C$ to the terminal object is an epimorphism. Such pretorsion theories are rectangular.
\end{corollary}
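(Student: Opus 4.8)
The plan is to deduce this as a direct specialization of \lemx\ref{LemmaZ}, taking $\J=\mathbf{1}$. First I would record the two easy points about $\mathbf{1}$, the full subcategory of $\C$ on the terminal objects: it is full by definition, and it is replete because being a terminal object is invariant under isomorphism. By \lemx\ref{LemmaZ}, it therefore suffices to prove that $\mathbf{1}$ is epireflective in $\C$ if and only if every morphism in $\C$ to a terminal object is an epimorphism, and then to invoke the second part of that lemma for rectangularity.

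For the equivalence, the key observation is that $\mathbf{1}$ is \emph{automatically} a reflective subcategory of $\C$: since $\C$ has a terminal object, fix one, say $1$; then the functor $\C\to\mathbf{1}$ that is constant at $1$ is left adjoint to the inclusion $\mathbf{1}\hookrightarrow\C$, with unit at $X$ the unique morphism $X\to 1$ (the relevant hom-bijection is trivial, both hom-sets being singletons). Consequently $\mathbf{1}$ is epireflective exactly when all of these unit components are epimorphisms. Finally, for any terminal object $T$ of $\C$, the unique morphism $X\to T$ factors through the canonical isomorphism $1\iso T$ as $X\to 1\iso T$, so it is an epimorphism precisely when $X\to 1$ is; hence ``$\mathbf{1}$ is epireflective'' is equivalent to ``every morphism in $\C$ to the terminal object is an epimorphism''.

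It then remains to see that such pretorsion theories are rectangular. For this I would note that $\mathbf{1}$ is equivalent to the terminal category: it is nonempty (as $\C$ has a terminal object), its hom-sets are all singletons, and any two of its objects are uniquely isomorphic, so the unique functor from $\mathbf{1}$ to the terminal category is an equivalence. The second assertion of \lemx\ref{LemmaZ} then yields that $(\C,\C,\mathbf{1})$ is rectangular.

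There is no real obstacle here, the result being a clean specialization of \lemx\ref{LemmaZ}. The only step meriting a moment's care is identifying the epireflectivity condition with the stated condition on morphisms to the terminal object --- that is, recognizing that the inclusion $\mathbf{1}\hookrightarrow\C$ is always reflective, with unit components the canonical maps $X\to 1$, so that the content of being a pretorsion theory collapses onto the requirement that these maps be epimorphisms.
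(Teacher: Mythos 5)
Your proposal is correct and follows essentially the same route as the paper: both reduce the statement to \lemx\ref{LemmaZ} by identifying epireflectivity of $\1$ with the condition that the unique maps $X\to 1$ are epimorphisms, and both obtain rectangularity from the second assertion of that lemma since $\1$ is equivalent to the terminal category. Your write-up is slightly more explicit about the automatic reflectivity of $\1$ and about passing between different terminal objects, but the substance is identical.
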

\begin{proof}
    $\1$ is of course a full replete subcategory of $\C$, by uniqueness of the terminal object up to iso. If every morphism in $\C$ to the terminal object is an epimorphism, then the unique epimorphisms $X\to 1$, for a fixed chosen $1\in \1$, give an epireflection of $\C$ into $\1$. Indeed, terminal objects are unique up to a unique isomorphism. By \lemx\ref{LemmaZ}, $(\C,\C,\1)$ is a rectangular pretorsion theory.
    
    If $(\C,\C,\1)$ is a pretorsion theory, then by \lemx\ref{LemmaZ} $\1$ is epireflective in $\C$. So for every $X\in \C$ there exists an epimorphism $\eta_X\:X\to 1$ with $1\in \1$. Whence every morphism in $\C$ to the terminal object is an epimorphism.
\end{proof}

\begin{corollary}\label{LemC}
Let $\mathbb{D}$ be a category with initial object and let $\0$ be the subcategory of initial objects in $\D$. The triple $(\D,\0,\mathbb{D})$ is a pretorsion theory if and only if every morphism in $\D$ from the initial object is a monomorphism. Such pretorsion theories are rectangular.
\end{corollary}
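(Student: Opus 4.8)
This is the exact dual of Corollary~\ref{LemD}, and I would prove it by the dual argument, using Lemma~\ref{LemmaW} in place of Lemma~\ref{LemmaZ}. First I would record that $\0$ is a full replete subcategory of $\D$: repleteness holds since initial objects are closed under isomorphism, and fullness holds since between any two initial objects there is exactly one morphism, necessarily an isomorphism.

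For the ``if'' direction, fix an initial object $0\in\0$. I would check that the inclusion $\0\hookrightarrow\D$ admits a right adjoint sending each object $D$ to $0$, with counit the unique morphism $0\to D$: for any initial object $0'$ and any morphism $0'\to D$, the unique morphism $0'\to 0$ gives the required factorization, and it is automatically the only one since all arrows out of $0'$ coincide. Hence $\0$ is a coreflective subcategory of $\D$, and under the hypothesis that every morphism out of an initial object is a monomorphism, the counit components $0\to D$ are monomorphisms, i.e.\ $\0$ is monocoreflective. Lemma~\ref{LemmaW} then yields that $(\D,\0,\D)$ is a pretorsion theory; and since $\0$ is equivalent to the terminal category (initial objects being unique up to unique isomorphism), the same lemma shows that this pretorsion theory is rectangular.

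For the ``only if'' direction, if $(\D,\0,\D)$ is a pretorsion theory then Lemma~\ref{LemmaW} tells us $\0$ is monocoreflective in $\D$, so in particular for every object $D$ there is a monomorphism $0\to D$ with $0\in\0$; as every morphism in $\D$ out of an initial object is the unique such morphism, each of these is a monomorphism. I do not anticipate a genuine obstacle: the only point needing a little care is verifying that monocoreflectivity of $\0$ unpacks precisely to the stated condition on morphisms out of the initial object, which is exactly the identification of the right adjoint carried out above.
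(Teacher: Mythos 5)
Your proof is correct and matches the paper's intended argument exactly: the paper gives no separate proof for this corollary, deriving it by duality from Corollary~\ref{LemD}, whose proof runs through Lemma~\ref{LemmaZ} precisely as your dual argument runs through Lemma~\ref{LemmaW}. Your identification of the monocoreflection (right adjoint sending $D$ to a fixed initial object $0$, with counit the unique morphism $0\to D$) is the dual of the epireflection used there, and both directions check out.
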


\begin{remark}
\corx\ref{LemD} (and analogously also \corx\ref{LemC}) can also be derived from Theorem~\ref{TheA}. Let $\D$ be the terminal category $\{\ast\}$ and apply Theorem~\ref{TheA} to $\mathbb{C}$ and $\{\ast\}$. We obtain that every morphism to the terminal object in $\mathbb{C}$ is an epimorphism if and only if $(\mathbb{C}\times \{\ast\},\mathbf{1}\times\{\ast\})$ is a pretorsion theory in $\mathbb{C}\times\{\ast\}$. The statement of \corx\ref{LemD} then follows thanks to the canonical isomorphism $\mathbb{C}\iso\mathbb{C}\times\{\ast\}$.  
\end{remark}

Thanks to Corollaries~\ref{LemD} and \ref{LemC}, we can clarify Theorem~\ref{TheA} using products of pretorsion theories. Indeed, the following result proves (in particular) that we can form products of pretorsion theories.

\begin{theorem}\label{TheB}
Consider a non-empty family $\mathbb{C}=(\mathbb{C}_i)_{i\in I}$ of non-empty categories and a family $(\mathcal{T}_i,\mathcal{F}_i)_{i\in I}$ of pairs of categories. The triple
\begin{equation}\label{EquB}(\Pi\mathbb{C},\Pi\mathcal{T},\Pi\mathcal{F})=\left(\prod_{i\in I}\mathbb{C}_i,\prod_{i\in I}\mathcal{T}_i,\prod_{i\in I}\mathcal{F}_i\right)\end{equation}
is a pretorsion theory if and only if each $(\C_i,\mathcal{T}_i,\mathcal{F}_i)$ is a pretorsion theory. Moreover, $(\Pi\mathbb{C},\Pi\mathcal{T},\Pi\mathcal{F})$ is rectangular if and only if each $(\C_i,\mathcal{T}_i,\mathcal{F}_i)$ is rectangular.
\end{theorem}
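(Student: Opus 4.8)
The plan is to reduce each clause in the definition of a pretorsion theory, and then the rectangularity clause, to the corresponding clause on the factors, argued component by component. Write $\mathcal{Z}_i=\mathcal{T}_i\cap\mathcal{F}_i$, so that $\Pi\mathcal{Z}:=(\Pi\mathcal{T})\cap(\Pi\mathcal{F})=\prod_i\mathcal{Z}_i$. We may assume every $\mathcal{T}_i$ and $\mathcal{F}_i$ is non-empty, since otherwise $\Pi\mathcal{T}$ or $\Pi\mathcal{F}$ is empty and, as a pretorsion theory on a non-empty category has non-empty torsion and torsion-free subcategories, both sides of the statement fail; and then each $\mathcal{Z}_i$ (hence $\Pi\mathcal{Z}$) is non-empty whenever a pretorsion theory is present, by (T1). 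Two elementary facts are then recorded: (a) $\Pi\mathcal{T}$ and $\Pi\mathcal{F}$ are full replete subcategories of $\Pi\mathbb{C}$ if and only if each $\mathcal{T}_i$, $\mathcal{F}_i$ is a full replete subcategory of $\mathbb{C}_i$, one implication being component-by-component and the other embedding a morphism or isomorphism of $\mathbb{C}_j$ into $\Pi\mathbb{C}$ by padding the remaining coordinates with identities on chosen objects; and (b) a morphism $(h_i)_i$ of $\Pi\mathbb{C}$ is null (relative to $\Pi\mathcal{Z}$) if and only if every $h_i$ is null in $\mathbb{C}_i$ --- projecting a factorisation through $\Pi\mathcal{Z}$ gives one direction, and assembling componentwise factorisations through the (non-empty) $\mathcal{Z}_i$'s gives the other.

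The crux is the following componentwise description of short exactness: a sequence $(T_i)_i\to(X_i)_i\to(F_i)_i$ in $\Pi\mathbb{C}$ with $(T_i)_i\in\Pi\mathcal{T}$ and $(F_i)_i\in\Pi\mathcal{F}$ is short exact with respect to $\Pi\mathcal{Z}$ if and only if each $T_i\to X_i\to F_i$ is short exact in $\mathbb{C}_i$ with respect to $\mathcal{Z}_i$. The ``if'' direction is routine: by (b) the composite is null, and the universal properties of the componentwise relative kernels and cokernels assemble into the universal property in $\Pi\mathbb{C}$, uniqueness being tested componentwise. The ``only if'' direction is where I expect the main difficulty, because the projection $\pi_j\colon\Pi\mathbb{C}\to\mathbb{C}_j$ is not a priori compatible with relative kernels. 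The device is padding: given a test morphism $u_j\colon W_j\to X_j$ in $\mathbb{C}_j$ with $r_ju_j$ null, set $W_i=T_i$ and $u_i=\ell_i$ for $i\neq j$; since each $r_i\ell_i$ is null (project the nullity of the product composite $(r_i)_i\circ(\ell_i)_i$), we get by (b) that $(r_i)_i\circ(u_i)_i$ is null, hence $(u_i)_i$ factors uniquely through $(\ell_i)_i$, and projecting to coordinate $j$ yields a factorisation of $u_j$ through $\ell_j$; uniqueness is recovered by padding two competing factorisations with identities in the other coordinates and invoking uniqueness in $\Pi\mathbb{C}$. The dual device handles cokernels. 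Since $j$ was arbitrary, each component sequence is short exact, establishing the ``only if'' direction.

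Granting this, the first assertion is immediate. If each $(\mathbb{C}_i,\mathcal{T}_i,\mathcal{F}_i)$ is a pretorsion theory, then (a) and (b) give full repleteness and (T1) for $(\Pi\mathbb{C},\Pi\mathcal{T},\Pi\mathcal{F})$, while (T2) follows by forming, for an object $(X_i)_i$, the componentwise product of the chosen short exact sequences, which is short exact by the ``if'' direction. Conversely, if $(\Pi\mathbb{C},\Pi\mathcal{T},\Pi\mathcal{F})$ is a pretorsion theory, fix $j$: full repleteness of $\mathcal{T}_j,\mathcal{F}_j$ comes from (a); for (T2) pad a given $X_j\in\mathbb{C}_j$ with chosen objects in the other coordinates, take a short exact sequence in $\Pi\mathbb{C}$, and project it to a short exact sequence in $\mathbb{C}_j$ by the ``only if'' direction, its ends lying in $\mathcal{T}_j,\mathcal{F}_j$; for (T1), given $h\colon T\to F$ with $T\in\mathcal{T}_j$ and $F\in\mathcal{F}_j$, pad the other coordinates with identities on chosen objects of $\mathcal{Z}_i$ to obtain a morphism of $\Pi\mathbb{C}$ from an object of $\Pi\mathcal{T}$ to an object of $\Pi\mathcal{F}$, which is null by (T1) for $\Pi\mathbb{C}$, whence $h$ is null by (b).

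For rectangularity, choose the short exact sequences of the product pretorsion theory componentwise; then, under the canonical isomorphism $(\Pi\mathcal{T})\times(\Pi\mathcal{F})\cong\prod_i(\mathcal{T}_i\times\mathcal{F}_i)$, the canonical functor $\Gamma$ of $(\Pi\mathbb{C},\Pi\mathcal{T},\Pi\mathcal{F})$ is identified with $\prod_i\Gamma_i$, where $\Gamma_i$ is the canonical functor of $(\mathbb{C}_i,\mathcal{T}_i,\mathcal{F}_i)$ --- and rectangularity does not depend on this choice, the functors $\Gamma$ for different choices being naturally isomorphic (since torsion and torsion-free parts are unique up to canonical isomorphism). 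It then remains to invoke the elementary fact that a product $\prod_i\Gamma_i$ of functors between non-empty categories is an equivalence if and only if every $\Gamma_i$ is: fullness and faithfulness pass both ways because hom-sets in a product are products of hom-sets, and essential surjectivity does too, by padding objects with chosen objects in the remaining coordinates and projecting. Hence $(\Pi\mathbb{C},\Pi\mathcal{T},\Pi\mathcal{F})$ is rectangular if and only if each $\Gamma_i$ is an equivalence if and only if each $(\mathbb{C}_i,\mathcal{T}_i,\mathcal{F}_i)$ is rectangular, which together with the first part completes the proof.
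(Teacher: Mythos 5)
Your proposal is correct and follows essentially the same route as the paper: reduce to non-empty $\mathcal{T}_i,\mathcal{F}_i$, observe that null morphisms are componentwise, prove that short exact sequences in the product are componentwise via the same padding device (filling the other coordinates with the structure morphisms of the given sequence), and identify the canonical functor $\Gamma$ with $\prod_i\Gamma_i$ under the evident isomorphism. The only cosmetic difference is that you obtain uniqueness of the component factorisation by padding two competing factorisations with identities, where the paper invokes that being a monomorphism is a componentwise property.
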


\begin{proof} Firstly, we remark that when either of the two conditions hold, each $\mathcal{T}_i,\mathcal{F}_i$ are non-empty. So without loss of generality we may assume that these categories are non-empty from the onset. 
Under this assumption, each $\mathcal{T}_i,\mathcal{F}_i$ are full replete subcategories of $\mathbb{C}_i$ if and only if $\Pi\mathcal{T},\Pi\mathcal{F}$ are full replete subcategories of $\Pi\mathbb{C}$.

We have:
$$\Pi\mathcal{T}\cap \Pi\mathcal{F}=\Pi_{i\in I}(\mathcal{T}_i\cap \mathcal{F}_i).$$
This implies that a morphism is null in $\Pi\mathbb{C}$ if and only if it is null in each $\mathbb{C}_i$. So null-morphisms in $\Pi\mathbb{C}$ are \emph{component-wise}. This clearly guarantees that (T1) holds for $(\Pi\mathcal{T},\Pi\mathcal{F})$ if and only if it holds for each $(\mathcal{T}_i,\mathcal{F}_i)$. To guarantee the same for (T2) it suffices to show that short exact sequences in $\Pi\mathbb{C}$ are also component-wise (an analogue, and in fact a generalisation of Lemma~\ref{LemA}). It is easy to see that if each 
$$\xymatrix{T_i\ar[r]^-{m_i} & C_i\ar[r]^-{e_i} & F_i}$$
is a short exact sequence, then so is 
$$\xymatrix{(T_i)_{i\in I}\ar[r]^-{(m_i)_{i\in I}} & (C_i)_{i\in I}\ar[r]^-{(e_i)_{i\in I}} & (F_i)_{i\in I}.}$$
Conversely, suppose the above is a short exact sequence. To show that the previous sequence is also short exact for some $i\in I$, consider a morphism $u_i\colon U_i\to C$ such that $e_iu_i$ is null. Define $U_j=T_j$ when $j\neq i$ and $u_j=m_j$ when $j\neq i$. Then the composite $(e_i)_{i\in I}(u_i)_{i\in I}$ is null. This results in a factorisation of $(u_i)_{i\in I}$ through $(m_i)_{i\in I}$, and hence a factorisation of $u_i$ through $m_i$. By the fact that being a monomorphism is a component-wise property and duality, we obtain the desired: that the sequence at $i$ shown above is short exact.

Next, we prove that rectangularity is preserved and reflected under products. It is easy to see that the functor $K\colon \Pi\mathbb{C}\to \Pi\mathcal{T}\times \Pi\mathcal{F}$ that assigns to each objects its torsion and torsion-free part can be built component-wise using similar functors at each $K_i$. In fact, we have a commutative diagram
$$\xymatrix{\Pi\mathbb{C}\ar[r]^-{K}\ar[rd]_-{\Pi_{i\in I}(K_i)\;\;\;} & \Pi\mathcal{T}\times \Pi\mathcal{F}\ar[d]^-{L}\\ & \Pi_{i\in I}(\mathcal{T}_i\times \mathcal{F}_i)}$$
where $L$ is an isomorphism. So $K$ will be an equivalence if and only if each $K_i$ is an equivalence. 
\end{proof}

\begin{example}\label{ExaB}
Consider the category $\mathbf{Set}_\mathrm{Rel}$ of sets and relations between sets as morphisms. Then $(\mathbf{Set}_\mathrm{Rel},\mathbf{Set}_\mathrm{Rel})$ is a pretorsion theory. 
Apply Theorem~\ref{TheB} to get a pretorsion theory which is the product of this pretorsion theory with the one from Example~\ref{ExaA}. Objects of the resulting category can be thought of as sets having three types of elements: negative, positive (as in Example~\ref{ExaA}), and neutral (those contributed by $\mathbf{Set}_\mathrm{Rel}$). Torsion part of an object can be obtained by discarding the positive elements, while torsion-free part by discarding the negative elements. 
\end{example}

Between Theorem~\ref{TheA} and the theorem above, there is also the following interesting intermediate result.

\begin{theorem}\label{TheC}
For a category $\mathbb{C}$ with a full replete subcategory $\J$, and a category $\mathbb{D}$ with a full replete subcategory $\I$, the triple 
$(\mathbb{C}\times\mathbb{D},\mathbb{C}\times \I,\J\times\mathbb{D})$ is a pretorsion theory if and only if $\J$ is an epireflective subcategory of $\mathbb{C}$ and $\I$ is a monocoreflective subcategory of $\mathbb{D}$.
\end{theorem}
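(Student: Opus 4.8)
The plan is to recognise the triple $(\mathbb{C}\times\mathbb{D},\mathbb{C}\times \I,\J\times\mathbb{D})$ as a product of pretorsion theories and then invoke \thex\ref{TheB}. Concretely, take the two-element index set $I=\{1,2\}$ with $\mathbb{C}_1=\mathbb{C}$, $\mathbb{C}_2=\mathbb{D}$, and the pairs $(\mathcal{T}_1,\mathcal{F}_1)=(\mathbb{C},\J)$ and $(\mathcal{T}_2,\mathcal{F}_2)=(\I,\mathbb{D})$. Then $\prod_{i\in I}\mathbb{C}_i=\mathbb{C}\times\mathbb{D}$, $\prod_{i\in I}\mathcal{T}_i=\mathbb{C}\times\I$ and $\prod_{i\in I}\mathcal{F}_i=\J\times\mathbb{D}$, so the triple in question is precisely the product triple \refs{EquB} of the triples $(\mathbb{C},\mathbb{C},\J)$ and $(\mathbb{D},\I,\mathbb{D})$. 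Hence \thex\ref{TheB} tells us that $(\mathbb{C}\times\mathbb{D},\mathbb{C}\times \I,\J\times\mathbb{D})$ is a pretorsion theory if and only if both $(\mathbb{C},\mathbb{C},\J)$ and $(\mathbb{D},\I,\mathbb{D})$ are pretorsion theories.

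It then only remains to translate these two conditions, which is exactly what Lemmas~\ref{LemmaZ} and~\ref{LemmaW} do. Since $\J$ is assumed full and replete, \lemx\ref{LemmaZ} says that $(\mathbb{C},\mathbb{C},\J)$ is a pretorsion theory precisely when $\J$ is an epireflective subcategory of $\mathbb{C}$; dually, \lemx\ref{LemmaW} says that $(\mathbb{D},\I,\mathbb{D})$ is a pretorsion theory precisely when $\I$ is a monocoreflective subcategory of $\mathbb{D}$. Chaining the three equivalences yields the statement.

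I expect no genuine obstacle here --- the argument is essentially bookkeeping --- but two small points want a line of care. First, one should check that the subcategory roles match up: in $(\mathbb{C},\mathbb{C},\J)$ the torsion subcategory is all of $\mathbb{C}$, so the only nontrivial constraint on a short exact sequence concerns its right-hand object, namely membership in $\J$ --- exactly what the factor $\J$ contributes to $\J\times\mathbb{D}$; dually for $(\mathbb{D},\I,\mathbb{D})$ and the factor $\I$ of $\mathbb{C}\times\I$. Second, \thex\ref{TheB} is stated for families of non-empty categories, so one first disposes of the degenerate case where $\mathbb{C}$ or $\mathbb{D}$ is empty: then $\mathbb{C}\times\mathbb{D}$ is empty and both sides of the asserted equivalence hold vacuously, with $\J$ (resp.\ $\I$) forced to be the empty subcategory, which is trivially epireflective (resp.\ monocoreflective). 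If one prefers a self-contained route, the same conclusion can be reached directly in the manner of \lemx\ref{LemA}: the intersection is $\J\times\I$, null morphisms and short exact sequences in $\mathbb{C}\times\mathbb{D}$ are component-wise, (T1) holds automatically since every map $(X,B)\to(A,Y)$ with $B\in\I$ and $A\in\J$ factors through $(A,B)\in\J\times\I$, and (T2) then decomposes into the existence of an epireflection of $\mathbb{C}$ onto $\J$ and a monocoreflection of $\mathbb{D}$ onto $\I$.
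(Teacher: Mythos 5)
Your proof is correct and follows exactly the paper's own route: the paper proves Theorem~\ref{TheC} precisely by recognising the triple as the product (in the sense of Theorem~\ref{TheB}) of $(\mathbb{C},\mathbb{C},\J)$ and $(\mathbb{D},\I,\mathbb{D})$ and then applying Lemmas~\ref{LemmaZ} and~\ref{LemmaW}. Your extra care about the non-emptiness hypothesis in Theorem~\ref{TheB} is a reasonable addition, but the argument is the same.
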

\begin{proof}
    This theorem is a consequence of \thex\ref{TheB} and Lemmas~\ref{LemmaZ} and \ref{LemmaW}.
\end{proof}

\begin{remark}
Pretorsion theories form a $2$-category $\PTors$ where
\begin{itemize}
\item $1$-cells $(\mathcal{C},\mathcal{T},\mathcal{F})\to (\mathcal{C}',\mathcal{T}',\mathcal{F}')$ are functors $\mathcal{C}\to \mathcal{C}'$ which preserve torsion and torsion-free objects, and send chosen short exact sequences to (not necessarily chosen) short exact sequences.

\item $2$-cells are natural transformations between functors.
\end{itemize}
We will denote by $\CartPTors$ the full sub-2-category of $\PTors$ consisting of rectangular pretorsion theories.
\end{remark}

\begin{theorem}
    The $2$-category $\PTors$ has all products, given by the pretorsion theories (\ref{EquB}) constructed in \thex\ref{TheB}.
\end{theorem}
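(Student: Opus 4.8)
The plan is to equip the pretorsion theory $(\Pi\mathbb{C},\Pi\mathcal{T},\Pi\mathcal{F})$ of \thex\ref{TheB} with the \emph{component-wise} choice of short exact sequences — the chosen sequence of an object $(C_i)_{i\in I}$ being the product of the chosen sequences $T_i^{C_i}\to C_i\to F_i^{C_i}$ in the factors, which is short exact by the forward implication established in the proof of \thex\ref{TheB} — and then to check directly that, together with the product projections, it satisfies the strict $2$-dimensional universal property of the product in $\PTors$. This choice is compatible with the global conventions of the Notations section, since if $(C_i)_i\in\Pi\mathcal{T}$ then each $C_i\in\mathcal{T}_i$, whence $T_i^{C_i}=C_i$ with $\ell_i^{C_i}$ an identity, and dually for $\Pi\mathcal{F}$.

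First I would verify that each product projection $\pi_j\colon\Pi\mathbb{C}\to\mathbb{C}_j$ underlies a $1$-cell in $\PTors$: it carries $\Pi\mathcal{T}$ into $\mathcal{T}_j$ and $\Pi\mathcal{F}$ into $\mathcal{F}_j$ by the very definition of the product subcategories, and by the component-wise choice above it carries the chosen short exact sequence of $(C_i)_i$ precisely to the chosen short exact sequence of $C_j$. Next, given any pretorsion theory $(\mathbb{D},\mathcal{S},\mathcal{G})$ and any family of $1$-cells $f_i\colon(\mathbb{D},\mathcal{S},\mathcal{G})\to(\mathbb{C}_i,\mathcal{T}_i,\mathcal{F}_i)$, the universal property of the product \emph{of categories} yields a unique functor $f=\langle f_i\rangle_{i\in I}\colon\mathbb{D}\to\Pi\mathbb{C}$ with $\pi_j\circ f=f_j$ for all $j$. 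I would then check that $f$ is a $1$-cell in $\PTors$: since $f(X)=(f_i(X))_i$ and each $f_i$ preserves torsion and torsion-free objects, so does $f$; and applying $f$ to the chosen sequence $T^X\to X\to F^X$ of an object $X$ of $\mathbb{D}$ produces $(f_i(T^X))_i\to(f_i(X))_i\to(f_i(F^X))_i$, each component of which is short exact in $\mathbb{C}_i$ because $f_i$ is a $1$-cell, hence the whole sequence is short exact in $\Pi\mathbb{C}$ by \thex\ref{TheB}. It need not equal the chosen sequence of $X$, but a $1$-cell of $\PTors$ is only required to land in \emph{some} short exact sequence. Strict uniqueness of $f$ among $1$-cells $g$ with $\pi_j\circ g=f_j$ is immediate, since two functors into a product of categories agreeing after every projection are equal, and a $1$-cell of $\PTors$ is determined by its underlying functor.

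For the $2$-dimensional part, $2$-cells of $\PTors$ are simply natural transformations, with no further compatibility imposed; hence given $1$-cells $f,g\colon(\mathbb{D},\mathcal{S},\mathcal{G})\to(\Pi\mathbb{C},\Pi\mathcal{T},\Pi\mathcal{F})$ and natural transformations $\alpha_i\colon\pi_i f\Rightarrow\pi_i g$, the product-of-categories universal property for natural transformations gives a unique $\alpha\colon f\Rightarrow g$ with $\pi_i\alpha=\alpha_i$ for all $i$. Combining the two parts, the functor
$$\PTors\bigl((\mathbb{D},\mathcal{S},\mathcal{G}),(\Pi\mathbb{C},\Pi\mathcal{T},\Pi\mathcal{F})\bigr)\too\prod_{i\in I}\PTors\bigl((\mathbb{D},\mathcal{S},\mathcal{G}),(\mathbb{C}_i,\mathcal{T}_i,\mathcal{F}_i)\bigr)$$
induced by post-composition with the $\pi_i$ is an isomorphism of categories, which is exactly the assertion that $(\Pi\mathbb{C},\Pi\mathcal{T},\Pi\mathcal{F})$ is the $2$-product. (The empty family is subsumed: there $\Pi\mathbb{C}$ is the terminal category with chosen sequence $\ast\to\ast\to\ast$, and the same reasoning exhibits it as the terminal object of $\PTors$.)

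There is no genuine obstacle once \thex\ref{TheB} is available: every clause reduces to the corresponding universal property of products in $\mathbf{Cat}$, together with the component-wise descriptions of torsion and torsion-free objects, null morphisms, and short exact sequences. The one point needing a little care — and the one I would flag — is the bookkeeping around the \emph{chosen} short exact sequences: one must fix the component-wise choice on $\Pi\mathbb{C}$ so that the projections become honest $1$-cells, and then recall that an induced $1$-cell $f=\langle f_i\rangle$ generally does \emph{not} preserve chosen sequences on the nose, only sending them to short exact sequences, which is all the definition of a $1$-cell of $\PTors$ demands.
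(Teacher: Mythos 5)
Your proof is correct and is precisely the routine verification that the paper omits (its "proof" consists of the single sentence that the argument is an easy routine): component-wise chosen sequences, reduction of the one- and two-dimensional universal properties to those of products in $\mathbf{Cat}$, and the observation that induced $1$-cells need only send chosen sequences to some short exact sequence. Your flagged point about the bookkeeping of chosen short exact sequences is exactly the one subtlety worth recording.
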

\begin{proof}
The proof is an easy routine and hence we omit it.
\end{proof}

We now show that the equivalences $\Gamma\:\C\to\T\x\F$ associated to rectangular pretorsion theories $(\C,\T,\F)$ behave well with 1-cells and 2-cells between rectangular pretorsion theories. 

\begin{construction}\label{conslambda}
Let $(\C,\T,\F)$ and $(\D,\T',\F')$ be rectangular pretorsion theories with associated equivalences $\Gamma\:\C \to \T \x \F$ and $\Delta\: \D \to \T'\x \F'$ and let $G:\C \to \D$ be a morphism between them. Consider the short exact sequence 
$$T^{X} \aar{\ell^{X}} X \aar{r^{X}} F^{X}$$ 
associated to $X\in \C$. Since $G$ sends the chosen short exact sequences in $(\mathcal{C},\mathcal{T},\mathcal{F})$ into short exact sequences in $\D$, we obtain a short exact sequence 
with $G(T^X)\in \T'$ and $G(F^X)\in \F'$.
But short exact sequences from torsion objects to torsion free objects are determined up to (unique) isomorphisms and so we obtain isomorphisms 
$$\lambda_X^1\: T^{G(X)} \aiso{} G(T^X) \qquad \text{and} \qquad \lambda_X^2\: F^{G(X)} \aiso{} G(F^X)$$ such that the following diagram commutes 
\begin{cd}[5][5]
{T^{G(X)}}  \arrow[r, "\ell^{G(X)}"] \arrow[d,aiso, "\lambda_X^1"']\& {G(X)} \arrow[r, "r^{G(X)}"] \arrow[d, "", equal]   \& {F^{G(X)}} \arrow[d,aiso, "\lambda_X^2"'] \\
{G(T^X)} \arrow[r, "G(\ell^X)"']\& {G(X)} \arrow[r, "G(r^X)"']\& {G(F^X)}
\end{cd}

The family 
$\fami{\lambda_X=(\lambda_X^1, \lambda_X^2)\: \Delta(G(X)) \to (G\times G)(\Gamma (X))}{x\in \C}$ gives a natural isomorphism 
\sq[i][6][6][\lambda][3][2.8]{\C}{\D}{\T\x \F}{\T'\x \F'.}{G}{\Gamma}{\Delta}{G\x G}
Indeed, given a morphism $g\: X \to Y$ in $\C$, by the commutativity of the diagrams 
\begin{cd}[5][5]
{T^{G(X)}}  \arrow[r, "\ell^{G(X)}"] \arrow[d,aiso, "\lambda_X^1"']\& {G(X)} \arrow[r, "r^{G(X)}"] \arrow[d, "", equal]   \& {F^{G(X)}} \arrow[d,aiso, "\lambda_X^2"'] \\
{G(T^X)} \arrow[d, "G(g^T)"'] \arrow[r, "G(\ell^X)"']\& {G(X)} \arrow[d,"G(g)"] \arrow[r, "G(r^X)"']\& {G(F^X)} \arrow[d,"G(g^F)"]\\
{T^{G(Y)}}  \arrow[r, "\ell^{G(Y)}"'] \arrow[d,aiso, "\lambda_Y^1"']\& {G(Y)} \arrow[r, "r^{G(Y)}"'] \arrow[d, "", equal]   \& {F^{G(Y)}} \arrow[d,aiso, "\lambda_Y^2"'] \\
{G(T^Y)} \arrow[r, "G(\ell^Y)"']\& {G(Y)} \arrow[r, "G(r^Y)"']\& {G(F^Y)}
\end{cd}
we conclude, by uniqueness of $G(g)^F$ and $G(g)^T$, that 
$$G(g)^T=(\lambda_Y^1)^{-1} \c G(g^{T}) \c \lambda_X^1 \qquad \text{ and } \qquad G(g)^F=(\lambda_Y^2)^{-1} \c G(g^{F}) \c \lambda_X^2.$$  
Hence we obtain that the diagram 
\sq{\Delta(G(X))}{(G\x G)(\Gamma(X))}{\Delta(G(Y))}{(G\x G)(\Gamma(Y))}{\lambda_X}{\Delta(G(g))}{(G\x G)(\Gamma(g))}{\lambda_Y}
is commutative since its components are. And so $\lambda$ is a natural transformation.
\end{construction}

\begin{proposition}\label{propalphacomp}
    Let $(\C,\T,\F)$ and $(\D,\T',\F')$ be rectangular pretorsion theories and let $\Gamma\: \C \to \T \x \F$ and $\Delta\: \D \to \T'\x \F'$ be the corresponding equivalences of categories. Let then $G,H\:(\C,\T,\F)\to (\D,\T',\F')$ be morphisms of rectangular pretorsion theories and let $\alpha\:G \Rightarrow H$ be a 2-cell between them. The following equality holds
    \begin{eqD*}
        \begin{cd}*
		{\C} \arrow[d,"{\Gamma}"',] \arrow[r,"{G}",""'{name=D}, bend left=65]  \arrow[r,"{H}"',""'{name=E}] \arrow[Rightarrow,from=D, to=E,"{\alpha}"{pos=0.35}, shorten >=0.3ex]\& {\D} \arrow[ld, Rightarrow,twoiso,"{\rho}", shorten <=2.5ex, shorten >=3ex,pos=0.4] \arrow[d,"{\Delta}"] \\
		{\T \x \F} \arrow[r,"{H \x H}"']\& {\T'\x \F'}
	\end{cd}
	\h[3]=\h[5] 
	\begin{cd}*
		{\C} \arrow[d,"{\Gamma}"',] \arrow[r,"G"] \& {\D} \arrow[ld, Rightarrow,twoiso,"{\lambda}", shorten <=2.5ex, shorten >=3ex,pos=0.4]  \arrow[d,"{\Delta}"]\\
		{\T \x \F} \arrow[r,"{G\x G}", ""'{name=A}] \arrow[r,"{H \x H}"', bend right=65,""{name=B}] \arrow[Rightarrow,from=A, to=B,"{\alpha \x \alpha}"{pos=0.75}, shorten >=-0.6ex, shorten <=-0.3ex]\& {\T' \x \F',}
	\end{cd}
\end{eqD*}
    where $\lambda$ and $\rho$ are the natural transformations associated to $G$ and $H$ respectively as in \conx\ref{conslambda}.
\end{proposition}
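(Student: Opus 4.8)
The plan is to check the asserted equality of the two pasted $2$-cells pointwise. Both sides are natural transformations $\Delta\c G\Rightarrow(H\x H)\c\Gamma$, so it suffices to show that, for each object $X$ of $\C$, the two resulting morphisms $\Delta(G(X))\to(H\x H)(\Gamma(X))$ coincide in $\T'\x\F'$. Unwinding the two pastings --- recalling that $\Delta$ sends $\alpha_X\colon G(X)\to H(X)$ to the pair $\bigl((\alpha_X)^T,(\alpha_X)^F\bigr)$ of induced morphisms (as in the Notations, applied inside $\D$ with $h=\alpha_X$), while $(\alpha\x\alpha)_{\Gamma(X)}=(\alpha_{T^X},\alpha_{F^X})$ --- and using that composition in $\T'\x\F'$ is componentwise, the claim reduces to the two identities of morphisms $T^{G(X)}\to H(T^X)$ and $F^{G(X)}\to H(F^X)$ respectively:
$$\rho_X^1\c(\alpha_X)^T=\alpha_{T^X}\c\lambda_X^1\qquad\text{and}\qquad\rho_X^2\c(\alpha_X)^F=\alpha_{F^X}\c\lambda_X^2.$$

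All the commutative squares I will use are already available. From \conx\ref{conslambda}, applied to $G$ and to $H$, I have $G(\ell^X)\c\lambda_X^1=\ell^{G(X)}$, $\lambda_X^2\c r^{G(X)}=G(r^X)$, $H(\ell^X)\c\rho_X^1=\ell^{H(X)}$ and $\rho_X^2\c r^{H(X)}=H(r^X)$. From the definition of the induced morphisms I have $\ell^{H(X)}\c(\alpha_X)^T=\alpha_X\c\ell^{G(X)}$ and $(\alpha_X)^F\c r^{G(X)}=r^{H(X)}\c\alpha_X$. From naturality of $\alpha$ at $\ell^X$ and at $r^X$ I have $H(\ell^X)\c\alpha_{T^X}=\alpha_X\c G(\ell^X)$ and $\alpha_{F^X}\c G(r^X)=H(r^X)\c\alpha_X$. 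Finally, since $H$ is a morphism of pretorsion theories, the image under $H$ of the chosen short exact sequence at $X$ is again short exact, so $H(\ell^X)$ is a kernel of $H(r^X)$ and $H(r^X)\c H(\ell^X)$ is null.

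For the first identity I would post-compose both sides with the kernel $H(\ell^X)\colon H(T^X)\to H(X)$. On the left this gives $\ell^{H(X)}\c(\alpha_X)^T=\alpha_X\c\ell^{G(X)}$, and on the right it gives $\alpha_X\c G(\ell^X)\c\lambda_X^1=\alpha_X\c\ell^{G(X)}$; so both composites equal $\alpha_X\c\ell^{G(X)}$. Moreover $H(r^X)\c(\alpha_X\c\ell^{G(X)})=H(r^X)\c H(\ell^X)\c\alpha_{T^X}\c\lambda_X^1$ is null, because $H(r^X)\c H(\ell^X)$ is and nulls form an ideal. The uniqueness clause in the universal property of the kernel $H(\ell^X)$ of $H(r^X)$ then forces $\rho_X^1\c(\alpha_X)^T=\alpha_{T^X}\c\lambda_X^1$. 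The second identity is dual: I would pre-compose both sides with the cokernel $r^{G(X)}\colon G(X)\to F^{G(X)}$ of $\ell^{G(X)}$ (the chosen short exact sequence at $G(X)$); via the squares above, both composites reduce to $H(r^X)\c\alpha_X$; the composite $(H(r^X)\c\alpha_X)\c\ell^{G(X)}$ is null by the same computation as before; and the universal property of $r^{G(X)}$ yields $\rho_X^2\c(\alpha_X)^F=\alpha_{F^X}\c\lambda_X^2$.

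I do not expect a genuine obstacle: the argument is a diagram chase assembled from squares that are already on the table, closed off by the universal properties of one relative kernel and one relative cokernel. The only two points needing care are (i) reading off the two pasting diagrams correctly as the componentwise identities displayed above, and (ii) checking that the relevant composites are null so that the uniqueness parts of those universal properties apply --- and (ii) is immediate once one observes that $H(\ell^X)$ is a kernel of $H(r^X)$, whence $H(r^X)\c H(\ell^X)$ is null.
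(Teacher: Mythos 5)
Your proposal is correct and follows essentially the same route as the paper: reduce the pasting equality to the two componentwise identities $\rho_X^1\c(\alpha_X)^T=\alpha_{T^X}\c\lambda_X^1$ and $\rho_X^2\c(\alpha_X)^F=\alpha_{F^X}\c\lambda_X^2$, then verify each by composing with $H(\ell^X)$ (resp.\ $r^{G(X)}$) and chasing the same squares from \conx\ref{conslambda}, the induced-morphism squares, and naturality of $\alpha$. The only cosmetic difference is that the paper cancels $H(\ell^X)$ as a monomorphism and $r^{G(X)}$ as an epimorphism outright, whereas you invoke the uniqueness clauses of the relative kernel and cokernel (with the extra nullity check), which amounts to the same thing.
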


\begin{proof}
    The desired equality can be proved on components and checked postcomposing with the production projections. So, given $X\in \C$, it suffices to prove that $\rho_X^{1} \c \alpha_X^T= \alpha_{T^X} \c \lambda_X^1$ and $\rho_X^{2} \c \alpha_X^F= \alpha_{F^X} \c \lambda_X^2$. Notice now that the following diagrams are commutative
    
\[\begin{tikzcd}
	{T^{G(X)}} & {G(X)} & {F^{G(X)}} && {T^{G(X)}} & {G(X)} & {F^{G(X)}} \\
	{T^{H(X)}} & {H(X)} & {F^{H(X)}} && {G(T^X)} & {G(X)} & {G(F^X)} \\
	{H(T^X)} & {H(X)} & {H(F^X)} && {H(T^X)} & {H(X)} & {H(F^X).}
	\arrow["{\ell^{G(X)}}", from=1-1, to=1-2]
	\arrow["{\alpha_X^T}"', from=1-1, to=2-1]
	\arrow["{r^{G(X)}}", from=1-2, to=1-3]
	\arrow["{\alpha_X}", from=1-2, to=2-2]
	\arrow["{\alpha_X^F}", from=1-3, to=2-3]
	\arrow["{\ell^{G(X)}}", from=1-5, to=1-6]
	\arrow["{\lambda^1_X}"', from=1-5, to=2-5]
	\arrow["{r^{G(X)}}", from=1-6, to=1-7]
	\arrow[from=1-6, to=2-6, equal]
	\arrow["{\lambda^2_X}", from=1-7, to=2-7]
	\arrow["{\ell^{H(X)}}"', from=2-1, to=2-2]
	\arrow["{\rho^1_X}"', from=2-1, to=3-1]
	\arrow["{r^{H(X)}}"', from=2-2, to=2-3]
	\arrow[from=2-2, to=3-2, equal]
	\arrow["{\rho^2_X}", from=2-3, to=3-3]
	\arrow["{G(\ell^X)}"', from=2-5, to=2-6]
	\arrow["{\alpha_{T^X}}"', from=2-5, to=3-5]
	\arrow["{G(r^X)}"', from=2-6, to=2-7]
	\arrow["{\alpha_X}", from=2-6, to=3-6]
	\arrow["{\alpha_{F^X}}", from=2-7, to=3-7]
	\arrow["{H(\ell^X)}"', from=3-1, to=3-2]
	\arrow["{H(r^X)}"', from=3-2, to=3-3]
	\arrow["{H(\ell^X)}"', from=3-5, to=3-6]
	\arrow["{H(r^X)}"', from=3-6, to=3-7]
\end{tikzcd}\]
Since $H(\ell^X)$ is a monomorphism, the commutativity of diagram on the left implies $\rho_X^{1} \c \alpha_X^T= \alpha_{T^X} \c \lambda_X^1$. Moreover, since $r^{G(X)}$ is an epimorphism, the commutativity of the diagram on the right implies $\rho_X^{2} \c \alpha_X^F= \alpha_{F^X} \c \lambda_X^2$. 
\end{proof}

While \thex\ref{TheB} implies that products of rectangular pretorsion theories are rectangular, it does not characterize all rectangular pretorsion theories. The following results will allow us to find such characterization. 

\begin{lemma}\label{lemmakercoker}
    Let $(\C,\T,\F)$ be a pretorsion theory. Given a kernel $k\:k(X)\to X$ of $\id{X}$, we have that $k(X)\in \T\cap \F$. Analogously, given a cokernel $c\:X\to c(X)$ of $\id{X}$, we have that $c(X)\in \T\cap \F$.
\end{lemma}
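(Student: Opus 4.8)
The plan is to reduce everything to one claim: \emph{the identity $\id{k(X)}$ is a null morphism}. Granting this, the membership $k(X)\in\T\cap\F$ is immediate from the short exact sequence attached to $k(X)$, as I explain at the end. Two facts will be used repeatedly: that $\mathcal{N}$ is a two-sided ideal, so any composite having a null factor is null; and the remark recorded in the Notations that for a null morphism $n\:A\to B$ the identity $\id{A}$ is a kernel of $n$ and $\id{B}$ is a cokernel of $n$.

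To prove that $\id{k(X)}$ is null I use the universal property of the kernel $k$. By definition of a kernel relative to $\mathcal{N}$, the composite $\id{X}\c k=k$ is null, so $k$ is a null morphism, and moreover every null morphism into $X$ factors uniquely through $k$. As $k$ is null, it factors as $k=b\c a$ with $a\:k(X)\to Z$ and $b\:Z\to X$ for some $Z\in\mathcal{Z}=\T\cap\F$. The morphism $b$ is null (it factors through $Z\in\mathcal{Z}$), so by the universal property there is $b'\:Z\to k(X)$ with $k\c b'=b$. Then $k\c(b'\c a)=b\c a=k=k\c\id{k(X)}$, so $b'\c a$ and $\id{k(X)}$ are two factorisations of the null morphism $k$ through $k$; by the uniqueness clause they coincide, $b'\c a=\id{k(X)}$. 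Since $a$ factors through $Z\in\mathcal{Z}$, it follows that $\id{k(X)}$ is null.

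Finally, take the chosen short exact sequence $T^{k(X)}\aar{\ell^{k(X)}}k(X)\aar{r^{k(X)}}F^{k(X)}$ of the object $k(X)$. Because $\id{k(X)}$ is null, both $\ell^{k(X)}=\id{k(X)}\c\ell^{k(X)}$ and $r^{k(X)}=r^{k(X)}\c\id{k(X)}$ are null, so $\id{k(X)}$ is a kernel of $r^{k(X)}$ and a cokernel of $\ell^{k(X)}$. On the other hand $\ell^{k(X)}$ is itself a kernel of $r^{k(X)}$ and $r^{k(X)}$ a cokernel of $\ell^{k(X)}$, so uniqueness of $\mathcal{N}$-kernels and $\mathcal{N}$-cokernels gives $T^{k(X)}\iso k(X)\iso F^{k(X)}$; repleteness of $\T$ and $\F$ then yields $k(X)\in\T$ and $k(X)\in\F$. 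The cokernel statement of the lemma follows by the dual argument (reverse all arrows, interchange kernels with cokernels and $\T$ with $\F$). The one delicate point is the appeal to \emph{uniqueness} --- rather than mere existence --- in the universal property of $k$ in order to obtain $b'\c a=\id{k(X)}$; the rest of the proof is formal.
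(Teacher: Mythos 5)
Your proof is correct and follows essentially the same route as the paper: both arguments factor the null morphism $k$ through an object $Z\in\mathcal{Z}$ and use the universal property of the kernel (equivalently, that $k$ is mono) to produce a splitting exhibiting $\id{k(X)}$ as null, i.e.\ $k(X)$ as a retract of $Z$. The only difference is the final step, where the paper invokes \cite[Corollary 2.8]{FFG21} (closure of $\T\cap\F$ under retracts) while you re-derive $k(X)\in\T\cap\F$ directly from the chosen short exact sequence and uniqueness of relative kernels and cokernels; both are valid, yours being slightly more self-contained.
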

\begin{proof}
    Let $k\:k(X)\to X$ be a kernel of $\id{X}$. Then $k$ needs to be a null morphism, so $k$ factors as
$$k(X)\aar{j}Z\aar{k'}X$$
with $Z\in \T\cap \F$. Since $\id{X}\c k'$ is null, there exists a unique morphism $v\:Z\to k(X)$ such that $k\c v=k'$. Then $v\c j=\id{}$, because this holds after composing with the mono $k$. So $k(X)$ is a retract of $Z\in \T\cap \F$, and by \cite[Corollary 2.8]{FFG21} we have $k(X)\in \T\cap \F$.

Analogously, we can prove that $c(X)\in \T\cap \F$.
\end{proof}

\begin{lemma}\label{LemE}
Let $(\C,\mathcal{T},\mathcal{F})$ be a rectangular pretorsion theory where $\C$ is a non-empty category. Then all objects in $\mathcal{T}\cap\mathcal{F}$ are isomorphic to an object $Z$ which is terminal in $\mathcal{T}$ and initial in $\mathcal{F}$. In particular, the ideal of null morphisms is generated by one object $Z$. Moreover $\F$ is quasi-pointed and $\T$ satisfies the dual condition.
\end{lemma}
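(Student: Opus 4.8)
The plan is to exploit the equivalence $\Gamma\colon \C\to \T\x\F$ to transport the structure back and forth. First I would show existence of the object $Z$. Since $\C$ is non-empty, pick any object $X\in\C$; then $F^X\in\F$ and $T^{F^X}\in\T\cap\F$ (indeed $T^{F^X}=T^X$ is not the point — rather, applying the short exact sequence construction to the object $F^X\in\F$ gives $T^{F^X}\aar{\ell}F^X$, and by Lemma~\ref{lemmakercoker} the kernel object $T^{F^X}$ of $\id{F^X}$, which is what $\ell^{F^X}$ is since $r^{F^X}=\id{}$, lies in $\T\cap\F$). So $\T\cap\F$ is non-empty; call an object in it $Z$. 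To see $Z$ is terminal in $\T$: given $T\in\T$, a morphism $T\to Z$ is the same (via $\Gamma$, which is fully faithful) as a morphism $(T,\0)\to(Z,Z)$ in $\T\x\F$ — wait, more carefully, I would argue directly using (T1). For any $T\in\T$, the morphism $r^T$ lands in $\F$ and, together with the observation that $\Gamma$ restricted appropriately is an equivalence, forces morphisms into $Z$ to be unique. The cleanest route: $\Gamma$ being an equivalence means $\C(T,Z)\iso (\T\x\F)(\Gamma T,\Gamma Z)=\T(T^T,T^Z)\x\F(F^T,F^Z)=\T(T,Z)\x\F(\0_{F^T},Z)$, and since $T\in\T$ forces $F^T\in\T\cap\F$ to be a null-generating object while $Z\in\F$... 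I would instead take the concrete approach: show any two objects of $\T\cap\F$ are uniquely isomorphic (they are retracts of each other by a null-morphism argument as in Lemma~\ref{lemmakercoker}, and uniqueness follows since the hom-set between them, computed via the equivalence $\Gamma$, is a singleton), fix $Z$, then show $\C(T,Z)$ is a singleton for all $T\in\T$ and $\C(Z,F)$ is a singleton for all $F\in\F$, again by pushing the hom-sets through $\Gamma$ and using that $T^Z\iso Z\iso F^Z$ with the null-generating property.

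Next, "the ideal of null morphisms is generated by one object $Z$" is then immediate: a morphism is null iff it factors through an object of $\T\cap\F$, and every such object is isomorphic to $Z$.

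Then, quasi-pointedness of $\F$: I must show $Z$ is initial in $\F$ (done above) and that every morphism $Z\to F$ in $\F$ is a monomorphism. Here I would use Theorem~\ref{TheA} philosophy in reverse, or argue directly: suppose $f,g\colon A\to Z$ in $\F$ with $uf=ug$ for the unique $u\colon Z\to F$ — but $Z$ is initial in $\F$, so I actually need: $u\colon Z\to F$ mono. Take $a,b\colon W\to Z$ in $\F$ with $ua=ub$; I want $a=b$, but $\C(W,Z)$ is not obviously a singleton for general $W\in\F$. Instead, I would use that $\Gamma$ is an equivalence: $F\in\F$ has $T^F\iso Z$ (as $T^F\in\T\cap\F$ by Lemma~\ref{lemmakercoker} applied to the s.e.s.\ of $F$, since $r^F=\id{}$ makes $\ell^F$ a kernel of $\id{F}$), so the s.e.s.\ of $F$ reads $Z\aar{\ell^F}F\aeqq{}F$, whence $\ell^F$ is a kernel of $\id{F}$, i.e.\ a monomorphism. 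Now any $u\colon Z\to F$: by initiality of $Z$ in $\F$ it equals $\ell^F$ up to the unique iso, hence is mono. Symmetrically $\T$ has $Z$ terminal, and every $T\to Z$ is an epimorphism because it is (up to iso) the cokernel map $r^T\colon T\to F^T=Z$ of $\id{T}$, which is an epimorphism; this is exactly the dual condition.

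The main obstacle I anticipate is the bookkeeping around which short exact sequence and which choice of $T^{(-)},F^{(-)}$ to use, and making the uniqueness-of-isomorphism claims precise — specifically verifying $\C(T,Z)$ is a singleton. The key leverage point is that $\Gamma$ fully faithful converts every hom-set in $\C$ into a product of a hom-set in $\T$ and one in $\F$, and that for $Z\in\T\cap\F$ the relevant hom-sets collapse because $Z$ (being a null-generating, hence "zero-like", object) is terminal on the $\T$-side and initial on the $\F$-side; this circularity is broken by first establishing terminality/initiality of $Z$ concretely from (T1) and Lemma~\ref{lemmakercoker}, then feeding it back through $\Gamma$.
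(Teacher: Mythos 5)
Your overall skeleton matches the paper's: produce an object $Z$ of $\T\cap\F$, show $\mathsf{hom}(Z,Z)$ is a singleton via the fully faithful $\Gamma$, and identify the structural maps $\ell^F$ and $r^T$ as a kernel (hence mono) and a cokernel (hence epi) of identities. But two central steps are not actually carried out, and the justifications you offer for them would fail. First, to show that any two objects $Z,Z'\in\T\cap\F$ are isomorphic you appeal to their being ``retracts of each other by a null-morphism argument as in Lemma~\ref{lemmakercoker}''; this does not work, because for two arbitrary objects of $\T\cap\F$ there need not exist any morphism between them at all (the retraction in Lemma~\ref{lemmakercoker} is onto the specific null object through which a given null morphism factors, not onto an arbitrary one). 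This is precisely where essential surjectivity of $\Gamma$ --- which your proposal never actually invokes, relying only on full faithfulness --- is needed: the paper chooses an object $C$ with $T^C\iso Z$ and $F^C\iso Z'$, and then both legs of the short exact sequence $Z\to C\to Z'$ are isomorphisms, since identities give kernels and cokernels of null morphisms and each leg of this sequence is a kernel or cokernel of a null morphism; hence $Z\iso C\iso Z'$.

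Second, the terminality of $Z$ in $\T$ (that $\mathsf{hom}(T,Z)$ is a singleton) is never proved. Pushing the hom-set through $\Gamma$ gives $\mathsf{hom}(T,Z)\iso\mathsf{hom}(T,Z)\times\mathsf{hom}(F^T,Z)$, which does not collapse $\mathsf{hom}(T,Z)$; you correctly sense the circularity at the end, but then only assert that terminality can be ``established concretely from (T1) and Lemma~\ref{lemmakercoker}'' without saying how. The missing argument (the paper's) is: $r^T\colon T\to F^T\iso Z$ is a cokernel of $\id{T}$; any $f\colon T\to Z$ is null by (T1), hence factors uniquely as $f=v\c r^T$ with $v\colon Z\to Z$; and since $\mathsf{hom}(Z,Z)=\{\id{Z}\}$ (the diagonal-bijection argument you do have), $f$ is forced to be $r^T$ up to the fixed isomorphism. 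With these two repairs, your remaining steps --- existence of $Z$ via Lemma~\ref{lemmakercoker} applied to $F^X$ (a nice self-contained alternative to the paper's citation of \cite{FFG21}), and the mono/epi conclusions obtained by identifying any $Z\to F$ with $\ell^F$ and any $T\to Z$ with $r^T$ --- go through as in the paper.
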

\begin{proof}
Consider an object $Z\in\mathcal{T}\cap\mathcal{F}$ (such $Z$ exists by Remark 2.9(a) in \cite{FFG21}). The canonical functor $\mathbb{C}\to\mathcal{T}\times\mathcal{F}$ maps $Z$ to an object in $\mathcal{T}\times\mathcal{F}$ that is isomorphic to $(Z,Z)$. Since this functor is an equivalence, we obtain a bijection
$$\mathsf{hom}(Z,Z)\iso \mathsf{hom}((Z,Z),(Z,Z))$$
given by the mapping $z\mapsto (z,z)$. This forces $\mathsf{hom}(Z,Z)$ to be a singleton. Indeed, given $(z'\:Z\to Z, z''\:Z\to Z)$, there exists a unique $z\:Z\to Z$ such that $(z,z)=(z',z'')$. Thus $\mathsf{hom}(Z,Z)$ only contains the identity of $Z$. Now, consider another object $Z'$ in the intersection $\mathcal{T}\cap\mathcal{F}$. By the canonical equivalence $\mathbb{C}\to\mathcal{T}\times\mathcal{F}$, there must be an object $C$ in $\mathbb{C}$ with the torsion and torsion-free parts given by 
$T^C=Z$ and $F^C=Z'$. This forces isomorphisms $Z\iso C\iso Z'$, because identities give kernels and cokernels of null morphisms. Whence all objects in $\T\cap \F$ are isomorphic to $Z$.

We prove that $Z$ is terminal in $\T$ and initial in $\F$. Given an object $T\in\mathcal{T}$, consider its reflection $r^T\:T\to F^T$ of $T$ to its torsion-free part $F^T$. Since we always choose $\ell^T=\id{T}$ (see the Notations subsection), $r^T$ is a cokernel of $\id{T}$. By \lemx\ref{lemmakercoker}, $F^T\in \T\cap \F$ and thus $F^T\iso Z$. In other words, for every $T\in \T$ we have a morphism $T\to Z$ that is a cokernel of $\id{T}\colon T\to T$ in $\C$. As a consequence, it is also a cokernel of $\id{T}$ in $\T$ and in particular an epi in $\T$. Moreover, since $\mathsf{hom}(Z,Z)$ is a singleton, there can only be one morphism $T\to Z$, proving that $Z$ is a terminal object in $\mathcal{T}$. Dually, $Z$ is an initial object in $\mathcal{F}$ such that all morphisms going out from $Z$ in $\F$ are mono in $\F$, which completes the proof.
\end{proof}

\begin{definition}\label{defequiv}
    By an \emph{equivalence} 
$(\mathcal{C},\mathcal{T},\mathcal{F})\approx (\mathcal{D},\mathcal{T}',\mathcal{F}')$
of pretorsion theories we mean an equivalence $\mathcal{C}\approx\mathcal{D}$ of categories which preserves torsion and torsion-free objects.
\end{definition}

\begin{lemma}\label{lemmaequiv}
    Let $\C$ and $\D$ be categories equipped with an ideal of null morphisms. Every equivalence of categories $\Lambda\:\C\approx \D:\Lambda'$ that preserves null morphisms preserves also kernels and cokernels, and thus also short exact sequences.
\end{lemma}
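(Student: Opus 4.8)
The plan is to reduce everything to a single statement: \emph{$\Lambda$ preserves $\mathcal{N}$-kernels}. Granting this, preservation of $\mathcal{N}$-cokernels follows by duality, applying it to the equivalence $\Lambda\op\:\C\op\to\D\op$, whose pseudo-inverse is $(\Lambda')\op$ and which preserves the same classes of morphisms, now viewed as ideals of null morphisms in $\C\op$ and $\D\op$ (a class of morphisms closed under pre- and post-composition in $\C$ is such in $\C\op$, and an $\mathcal{N}$-cokernel in $\C$ is exactly an $\mathcal{N}$-kernel in $\C\op$). Finally, a short exact sequence is, by definition, a composable pair $T\aar{\ell}X\aar{r}F$ with $\ell$ an $\mathcal{N}$-kernel of $r$ and $r$ an $\mathcal{N}$-cokernel of $\ell$; since $\Lambda$ preserves both constructions, it sends short exact sequences to short exact sequences. (Here and below, ``the equivalence preserves null morphisms'' is read as: both $\Lambda$ and $\Lambda'$ do.)

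For the key step, I would use the standard reformulation of the universal property: $k\:K\to A$ is an $\mathcal{N}$-kernel of $f\:A\to B$ precisely when $k$ is a terminal object of the category $\C_f$ whose objects are the morphisms $g\:C\to A$ with $fg$ null and whose morphisms are the commutative triangles over $A$ (a full subcategory of the slice $\C/A$). Because $\Lambda$ preserves null morphisms, $\Lambda f\c\Lambda g=\Lambda(fg)$ is null whenever $fg$ is, so $\Lambda$ restricts to a functor $\bar\Lambda\:\C_f\to\D_{\Lambda f}$. The claim is that $\bar\Lambda$ is an equivalence. It is fully faithful since $\Lambda$ is and the hom-sets of $\C_f$ and $\D_{\Lambda f}$ are carved out of those of $\C$ and $\D$ by commutativity conditions that $\Lambda$ faithfully reflects. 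For essential surjectivity, given $g'\:C'\to\Lambda A$ with $\Lambda f\c g'$ null, I would transport it along $\Lambda'$: choosing natural isomorphisms $\varepsilon\:\Lambda'\Lambda\Rightarrow\Id{\C}$ and $\Lambda\Lambda'\Rightarrow\Id{\D}$, set $\tilde g:=\varepsilon_A\c\Lambda'(g')\:\Lambda'C'\to A$; naturality of $\varepsilon$ gives $f\c\tilde g=\varepsilon_B\c\Lambda'(\Lambda f\c g')$, which is null since $\Lambda f\c g'$ is null, $\Lambda'$ preserves null morphisms, and ideals are closed under composition with isomorphisms. Thus $\tilde g$ is an object of $\C_f$, and the natural isomorphism $\Lambda\Lambda'\Rightarrow\Id{\D}$ provides an isomorphism $\Lambda\tilde g\iso g'$ in $\D_{\Lambda f}$ (after the usual coherence bookkeeping, replacing the given equivalence by an adjoint one if needed). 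Hence $\bar\Lambda$ is essentially surjective, so an equivalence, so it preserves terminal objects, and therefore $\Lambda k=\bar\Lambda(k)$ is an $\mathcal{N}$-kernel of $\Lambda f$.

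I expect the only substantive point to be the essential surjectivity of $\bar\Lambda$, which is exactly where the hypothesis is used through $\Lambda'$: without $\Lambda'$ preserving null morphisms there is no reason for the transported morphism $\tilde g$ to lie in $\C_f$, and the universal property fails to cross over. All the remaining ingredients --- full faithfulness descending to the comma categories, terminal objects being invariant under equivalence, the opposite-category duality, and the reassembly of short exact sequences --- are routine. (If one prefers to avoid comma categories, the same computation can be carried out by hand, verifying the universal property of $\Lambda k$ directly and transposing morphisms along the natural bijection $\Hom[\D]{-}{\Lambda(-)}\iso\Hom[\C]{\Lambda'(-)}{-}$; this merely unpacks the argument above.)
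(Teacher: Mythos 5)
Your proof is correct and is essentially the paper's own argument: the paper verifies the universal property of $\Lambda(\ell)$ directly by transporting the test morphism along $\Lambda'$ (which is exactly where the hypothesis that $\Lambda'$ preserves null morphisms enters) and then invoking a triangle identity of the adjoint equivalence plus the fact that $\Lambda(\ell)$ is a monomorphism for uniqueness --- precisely the content of your essential-surjectivity step and your full-faithfulness/terminal-object step, respectively. The packaging via the comma category $\C_f$ and preservation of terminal objects is a clean reformulation rather than a different route, and the paper likewise disposes of cokernels by duality.
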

\begin{proof}
    We can assume that the equivalence $\Lambda\:\C\approx \D:\Lambda'$ is an adjoint equivalence. Consider a composite of morphisms
    $$T\aar{l}X\aar{r}F.$$
    Assume that $\ell$ is the kernel of $r$. We prove that $\Lambda(\ell)$ is the kernel of $\Lambda(r)$. So consider a morphism $m\:M\to \Lambda(X)$ such that $\Lambda(r)\c m$ is a null morphism. By assumption, also $\Lambda'(\Lambda(r))\c \Lambda'(m)$ is a null morphism, whence also
    $$\Lambda'(M)\aar{\Lambda'(m)}\Lambda'(\Lambda(X))\iso X\aar{r} F$$
    is a null morphism. Since $\ell$ is the kernel of $r$, there exists a unique morphism $v\: \Lambda'(M)\to X$ such that
    \begin{cd}[5][5]
        \Lambda'(M) \ar[r,"\Lambda'(m)"]\ar[d,"v"']\& \Lambda'(\Lambda(X)) \ar[d,iso]\\
        T \ar[r,"\ell"']\& X
    \end{cd}
Whence the morphism $w$ given by the composite $M\iso \Lambda(\Lambda'(M))\aar{\Lambda(v)} \Lambda(T)$ is such that $\Lambda(l)\c w=m$, thanks to the triangular identity of the adjoint equivalence. Since $\Lambda(l)$ is mono, because every equivalence preserves limits, $w$ is the unique morphism with such property. We thus conclude that $\Lambda(\ell)$ is the kernel of $\Lambda(r)$.

Analogously, we have that $\Lambda$ preserves cokernels.
\end{proof}

\begin{proposition}\label{propequivpretors}
    Equivalences of pretorsion theories $(\C,\T,\F)\approx (\D,\T',\F')$, as defined in \defx\ref{defequiv}, coincide with internal equivalences in the $2$-category $\PTors$ of pretorsion theories.
\end{proposition}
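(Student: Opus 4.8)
The plan is to prove two implications. On one hand, if $\Lambda$ is an internal equivalence in $\PTors$, with pseudo-inverse the $1$-cell $\Lambda'$ and invertible $2$-cells $\id{}\iso\Lambda'\Lambda$ and $\Lambda\Lambda'\iso\id{}$, then---since $1$-cells of $\PTors$ are functors and $2$-cells are natural transformations---the underlying functor of $\Lambda$ is an equivalence of categories, and being a $1$-cell it preserves torsion and torsion-free objects; so it is an equivalence of pretorsion theories in the sense of \defx\ref{defequiv}. The substantive content is the converse: any equivalence of categories $\Lambda\:\C\approx\D:\Lambda'$ with $\Lambda$ preserving torsion and torsion-free objects is in fact an internal equivalence in $\PTors$. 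Here I may assume, as in \lemx\ref{lemmaequiv}, that the equivalence is adjoint.

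First I would observe that $\Lambda$ preserves null morphisms, since a morphism factoring through some $Z\in\T\cap\F$ has $\Lambda$-image factoring through $\Lambda(Z)\in\T'\cap\F'$. By \lemx\ref{lemmaequiv} this already gives that $\Lambda$ preserves kernels, cokernels and short exact sequences; in particular it sends each chosen short exact sequence to a (not necessarily chosen) short exact sequence, so $\Lambda$ is a $1$-cell of $\PTors$.

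The hard part is to show that $\Lambda'$ likewise preserves torsion and torsion-free objects, which \defx\ref{defequiv} does not grant a priori. I would obtain this from the fact that $\Lambda$ \emph{reflects} torsion and torsion-free objects. For the torsion case: if $\Lambda(X)\in\T'$, apply $\Lambda$ to the chosen short exact sequence $T^X\aar{\ell^X}X\aar{r^X}F^X$; by the previous step this is a short exact sequence with middle object $\Lambda(X)$, and comparing it---via uniqueness of short exact sequences up to canonical isomorphism \cite{FFG21}---with the chosen short exact sequence of $\Lambda(X)\in\T'$, whose left leg is $\id{\Lambda(X)}$, forces $\Lambda(\ell^X)$ to be an isomorphism. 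Since $\Lambda$ is fully faithful, hence conservative, $\ell^X$ is an isomorphism, so $X\iso T^X\in\T$ and $X\in\T$ by repleteness; the torsion-free case is dual. Consequently, for any $T'\in\T'$ we have $\Lambda(\Lambda'(T'))\iso T'\in\T'$, whence $\Lambda'(T')\in\T$; dually for torsion-free objects.

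With $\Lambda'$ preserving torsion and torsion-free objects, the symmetric version of the second paragraph applies to it: $\Lambda'$ preserves null morphisms (a morphism through $Z\in\T'\cap\F'$ has $\Lambda'$-image through $\Lambda'(Z)\in\T\cap\F$), hence short exact sequences by \lemx\ref{lemmaequiv}, so $\Lambda'$ too is a $1$-cell of $\PTors$. Finally the unit and counit of the adjoint equivalence are natural isomorphisms, hence invertible $2$-cells of $\PTors$, and they exhibit $\Lambda$ as an internal equivalence there. The one genuinely delicate point is the reflection statement for $\Lambda$; everything else is bookkeeping with \lemx\ref{lemmaequiv} and the description of $\PTors$.
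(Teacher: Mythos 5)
Your two-implication skeleton is the same as the paper's, and the easy direction together with the observation that preservation of torsion and torsion-free objects forces preservation of null morphisms (hence, via \lemx\ref{lemmaequiv}, of short exact sequences) is exactly the paper's argument. The trouble is in what you call the hard part. You read \defx\ref{defequiv} as requiring only the forward functor $\Lambda$ to preserve torsion and torsion-free objects, and you then try to deduce the same for $\Lambda'$. As ordered, the deduction is circular: to get that $\Lambda$ preserves short exact sequences you invoke \lemx\ref{lemmaequiv}, but the proof of that lemma needs $\Lambda'$ to preserve null morphisms (it transports the nullity of $\Lambda(r)\circ m$ back along $\Lambda'$ in order to use the kernel property in $\C$), and you only obtain that $\Lambda'$ preserves null morphisms at the very end, as a consequence of $\Lambda$ preserving short exact sequences. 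Preservation of null morphisms by $\Lambda$ alone does not suffice for the lemma: the identity functor from a category with a smaller ideal to the same category with a larger ideal preserves null morphisms without preserving kernels. (Your reflection argument itself would be fine once preservation of short exact sequences is in hand; the problem is purely that you cannot get there first.)

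No reordering can repair this, because the one-sided statement is false. On the three-object linear order $a\to z\to b$, the triples $(\T,\F)=(\{a,z\},\{z,b\})$ and $(\T',\F')=(\{a,z,b\},\{z,b\})$ are both pretorsion theories; the identity functor sends $\T$ into $\T'$ and $\F$ into $\F'$, yet it is not even a $1$-cell of $\PTors$ (the chosen short exact sequence $z\to b\to b$ of the first theory fails to be short exact for the second, where $b$ lies in $\T'\cap\F'$ and so $\id{b}$ is null), let alone an internal equivalence. So \defx\ref{defequiv} must be read as requiring \emph{both} functors of the equivalence to preserve torsion and torsion-free objects --- this is how the paper uses it, e.g.\ in the verification carried out in \corx\ref{corolltransfer}. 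Under that reading your first two paragraphs, applied symmetrically to $\Lambda$ and $\Lambda'$, together with your final paragraph already constitute the proof, and the ``hard part'' disappears.
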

\begin{proof}
    Every equivalence of categories $\Lambda\:\C\approx \D\:\Lambda'$ that preserves torsion and torsion-free objects also preserves null morphisms, which are generated by $\T\cap \F$. By \lemx\ref{lemmaequiv}, $\Lambda$ and $\Lambda'$ then also preserve short exact sequences. It is straightforward to conclude.
\end{proof}

\begin{corollary}\label{corolltransfer}
    Let $\Lambda\:\C\approx \D\:\Lambda'$ be an equivalence of categories and let $(\D,\T',\F')$ be a pretorsion theory. Then there exists a pretorsion theory $(\C,\T,\F)$ such that $\Lambda$ lifts to an equivalence of pretorsion theories.
\end{corollary}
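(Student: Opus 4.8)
The plan is to transport the pretorsion theory along the equivalence and then read off the conclusion from the infrastructure already in place, in particular \lemx\ref{lemmaequiv} and \prox\ref{propequivpretors}. First I would \emph{define} $\T$ to be the full subcategory of $\C$ spanned by the objects $X$ with $\Lambda(X)\in\T'$, and $\F$ the full subcategory spanned by the objects $X$ with $\Lambda(X)\in\F'$. These are full by construction and replete because $\Lambda$ preserves isomorphisms and $\T',\F'$ are replete; consequently $\mathcal{Z}=\T\cap\F$ is the full subcategory on the objects $X$ with $\Lambda(X)\in\mathcal{Z}'=\T'\cap\F'$. The crucial bookkeeping step is to observe that $\Lambda$ sends $\mathcal{Z}$ into $\mathcal{Z}'$ (immediate from the definitions) and that $\Lambda'$ sends $\mathcal{Z}'$ into $\mathcal{Z}$ (if $Z'\in\mathcal{Z}'$ then $\Lambda(\Lambda'(Z'))\iso Z'\in\T'\cap\F'$, so by repleteness $\Lambda'(Z')\in\T\cap\F$). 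Hence both $\Lambda$ and $\Lambda'$ preserve null morphisms, i.e.\ morphisms factoring through the respective intersections; moreover $\Lambda$ also \emph{reflects} null morphisms, since $\Lambda'$ preserves them and $\Lambda'\c\Lambda\iso\Id{}$ (a morphism isomorphic to a null morphism is null, null morphisms forming an ideal).

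Granting this, axiom (T1) is immediate: a morphism $T\to F$ with $T\in\T$ and $F\in\F$ is sent by $\Lambda$ to a morphism $\Lambda(T)\to\Lambda(F)$ with $\Lambda(T)\in\T'$ and $\Lambda(F)\in\F'$, hence null in $\D$ by (T1) for $(\D,\T',\F')$, hence null in $\C$ because $\Lambda$ reflects null morphisms. For (T2), given $X\in\C$ I take the chosen short exact sequence $T'\to\Lambda(X)\to F'$ of $\Lambda(X)$ (with $T'\in\T'$, $F'\in\F'$); since $\Lambda'$ preserves null morphisms, \lemx\ref{lemmaequiv} tells us $\Lambda'$ preserves kernels, cokernels and short exact sequences, so $\Lambda'(T')\to\Lambda'(\Lambda(X))\to\Lambda'(F')$ is short exact, and transporting it along an isomorphism $\Lambda'(\Lambda(X))\iso X$ --- which preserves short exactness, kernels and cokernels being stable under an isomorphism of the middle object --- yields a short exact sequence $\Lambda'(T')\to X\to\Lambda'(F')$. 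Finally $\Lambda'(T')\in\T$ and $\Lambda'(F')\in\F$ by the same repleteness argument as above. This shows $(\C,\T,\F)$ is a pretorsion theory, and I would take these transported sequences as its chosen short exact sequences in the sense of the Notations.

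It then remains only to conclude that $\Lambda$ lifts to an equivalence of pretorsion theories. By construction $\Lambda$ maps $\T$ into $\T'$ and $\F$ into $\F'$, and by the observation above $\Lambda'$ maps $\T'$ into $\T$ and $\F'$ into $\F$; thus $\Lambda$ is an equivalence of categories preserving torsion and torsion-free objects, which by \defx\ref{defequiv} and \prox\ref{propequivpretors} is exactly an equivalence of the pretorsion theories $(\C,\T,\F)\approx(\D,\T',\F')$. I expect no genuine obstacle here: the one point requiring care is the hypothesis of \lemx\ref{lemmaequiv} that $\Lambda$ and $\Lambda'$ preserve the \emph{relevant} ideals of null morphisms, and this is precisely what defining $\T$ and $\F$ as the $\Lambda$-preimages of $\T'$ and $\F'$ secures --- which is why one should not instead try to work with essential images.
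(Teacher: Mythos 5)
Your proof is correct and follows essentially the same route as the paper: define $\T=\Lambda^{-1}(\T')$ and $\F=\Lambda^{-1}(\F')$, check that null morphisms are preserved (and reflected), get (T1) directly and (T2) by transporting the chosen short exact sequence of $\Lambda(X)$ back along $\Lambda'$ via \lemx\ref{lemmaequiv}, then conclude with \prox\ref{propequivpretors}. The only cosmetic difference is that the paper defines $\T$ as the essential image of $\Lambda'$ restricted to $\T'$ and then observes this coincides with your preimage description, so your closing caution against essential images is unnecessary --- the two definitions agree here precisely because $\T'$ is replete.
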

\begin{proof}
    Define $\T$ to be the full subcategory of $\C$ on the objects of the essential image of 
    $$\T'\cont \D\aar{\Lambda'}\C,$$
    and analogously define $\F$ using $\F'\cont \D\aar{\Lambda'}\C$. Equivalently, $\T=\Lambda^{-1}(\T')$ and $\F=\Lambda^{-1}(\F')$. Then of course $\T$ and $\F$ are replete, and both $\Lambda$ and $\Lambda'$ preserve torsion and torsion-free objects. As a consequence, $\Lambda$ and $\Lambda'$ also preserve null morphisms, and every morphism in $\C$ from an object of $\T$ to an object of $\F$ is null. Finally, given $X\in \C$, we want to construct an appropriate short exact sequence with $X$ in the middle. We consider $\Lambda(X)$ and its associated short exact sequence
    $$T^{\Lambda(X)}\aar{\ell}\Lambda(X)\aar{r}F^{\Lambda(X)}$$
    in $\D$. Since $\Lambda'$ is an equivalence of categories that preserves null morphisms, by \lemx\ref{lemmaequiv} it preserves short exact sequences. We thus obtain a short exact sequence
$$\Lambda'(T^{\Lambda(X)})\aar{\Lambda'(\ell)}\Lambda'(\Lambda(X))\iso X\iso \Lambda'(\Lambda(X))\aar{\Lambda'(r)}\Lambda'(F^{\Lambda(X)}).$$
And we conclude that $(\C,\T,\F)$ is a pretorsion theory such that $\Lambda$ lifts to an equivalence of pretorsion theories (also thanks to \prox\ref{propequivpretors}).
\end{proof}

\begin{corollary}\label{corequivtf}
    Let $(\C,\T,\F)$ be a rectangular pretorsion theory, and consider its associated equivalence $\Gamma\:\C\to \T\x\F$. Then $$(\T\x\F,\hspace{0.1ex}\T\x(\T\cap \F),(\T\cap\F)\x \F)$$
    is a pretorsion theory equivalent to $(\C,\T,\F)$ via $\Gamma$.
\end{corollary}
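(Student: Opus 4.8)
The plan is to transport the pretorsion theory $(\C,\T,\F)$ across the equivalence $\Gamma\:\C\to\T\x\F$ by means of \corx\ref{corolltransfer}, and then to identify the torsion and torsion-free subcategories produced by that transport with $\T\x(\T\cap\F)$ and $(\T\cap\F)\x\F$ respectively.

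First I would fix a pseudo-inverse $\Gamma'\:\T\x\F\to\C$ of $\Gamma$ and apply \corx\ref{corolltransfer} to the equivalence $\Gamma'$ together with the pretorsion theory $(\C,\T,\F)$, playing the roles of $\Lambda$ and of $(\D,\T',\F')$ in that corollary. This produces a pretorsion theory $(\T\x\F,\widetilde\T,\widetilde\F)$ such that $\Gamma'$ lifts to an equivalence of pretorsion theories $(\T\x\F,\widetilde\T,\widetilde\F)\approx(\C,\T,\F)$; then $\Gamma$ itself is an equivalence of categories preserving torsion and torsion-free objects, so it is an equivalence of pretorsion theories in the sense of \defx\ref{defequiv} (equivalently, by \prox\ref{propequivpretors}, $\Gamma$ realizes this as an internal equivalence in $\PTors$). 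By the construction in \corx\ref{corolltransfer}, $\widetilde\T$ is the full replete subcategory of $\T\x\F$ spanned by the essential image of the composite $\T\cont\C\aar{\Gamma}\T\x\F$, and dually for $\widetilde\F$ and $\F$.

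It then remains only to show $\widetilde\T=\T\x(\T\cap\F)$ and, symmetrically, $\widetilde\F=(\T\cap\F)\x\F$. For $T\in\T$ the chosen short exact sequence for $T$ has $T^T=T$ and $\ell^T=\id{T}$ by our Notations convention, so $\Gamma(T)=(T,F^T)$ and the morphism $r^T\:T\to F^T$ is a cokernel of $\id{T}$ in $\C$; hence $F^T\in\T\cap\F$ by \lemx\ref{lemmakercoker}. Thus every object of $\widetilde\T$, being up to isomorphism of the form $\Gamma(T)$, lies in $\T\x(\T\cap\F)$. For the reverse inclusion I would invoke \lemx\ref{LemE}: all objects of $\T\cap\F$ are isomorphic to the distinguished object $Z$, and in particular $F^T\iso Z$ for every $T\in\T$; so for any $(T,Z')$ with $T\in\T$ and $Z'\in\T\cap\F$ we get $(T,Z')\iso(T,F^T)=\Gamma(T)$, and repleteness of $\widetilde\T$ gives $(T,Z')\in\widetilde\T$. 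Since $\widetilde\T$ and $\T\x(\T\cap\F)$ are full subcategories of $\T\x\F$ with the same objects, they coincide; the case of $\F$ is dual. (If $\C$ is empty all the categories in sight are empty and the statement is vacuous, so this appeal to \lemx\ref{LemE} is harmless.)

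I expect no genuine obstacle: once \corx\ref{corolltransfer} is available the argument is essentially bookkeeping, and the only point that needs care is upgrading the identification of $\widetilde\T$ from an inclusion into $\T\x(\T\cap\F)$ to an equality of subcategories --- this is exactly where the rigidity of the null object furnished by \lemx\ref{LemE} is indispensable. As an independent check that the target triple is a pretorsion theory, one can instead apply \thex\ref{TheC}, using that $\T\cap\F$ is epireflective in $\T$ and monocoreflective in $\F$, both facts being read off from the proof of \lemx\ref{LemE}.
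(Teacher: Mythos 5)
Your proposal is correct and follows essentially the same route as the paper: transport the pretorsion theory across the equivalence via \corx\ref{corolltransfer}, then identify the transported torsion and torsion-free classes with $\T\x(\T\cap\F)$ and $(\T\cap\F)\x\F$ using \lemx\ref{lemmakercoker} and \lemx\ref{LemE}. Your version is in fact slightly more careful than the paper's (which direction of the equivalence plays the role of $\Lambda$, and the explicit two-inclusion argument for $\widetilde\T=\T\x(\T\cap\F)$), but the content is the same.
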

\begin{proof}
    By \corx\ref{corolltransfer}, there exists a pretorsion theory $(\T\x\F,\T'',\F'')$ such that $\Gamma$ lifts to an equivalence of pretorsion theories. By the proof of \corx\ref{corolltransfer}, $\T''$ is given by the full subcategory of $\T\x\F$ on the objects of the essential image of $\T\cont \C\aar{\Gamma} \T\x \F$. For every $T\in \T$, $\Gamma(T)=(T^T,F^T)\iso (T,Z)$ for fixed chosen $Z\in \T\cap \F$ such that all objects in $\T\cap \F$ are isomorphic to $Z$, by \lemx\ref{LemE} and its proof. So $\T''=\T\x \T\cap \F=\T\x\mathbf{Z}$, where $\mathbf{Z}$ is the full subcategory of $\C$ given by the objects isomorphic to $Z$. Analogously, $\F''=\T\cap \F\x \F=\mathbf{Z}\x \F$.
\end{proof}

\begin{theorem}\label{theorcharactcart}
For a pretorsion theory $(\mathcal{C},\mathcal{T},\mathcal{F})$ where $\mathcal{C}$ is a non-empty category, the following conditions are equivalent:
\begin{itemize}
\item[(a)] $(\mathcal{C},\mathcal{T},\mathcal{F})$ is a rectangular pretorsion theory.

\item[(b)] $(\mathcal{C},\mathcal{T},\mathcal{F})$ is equivalent to a pretorsion theory of the form $(\C\x \D,\mathbb{C}\times \mathbf{0},\mathbf{1}\times\mathbb{D})$ where $\D$ is a quasi-pointed category and $\C$ is a category satisfying the dual condition, as described in Theorem~\ref{TheA}.

\item[(c)] $(\mathcal{C},\mathcal{T},\mathcal{F})$ is equivalent to a product of pretorsion theories of the form $(\C,\C,\1)$ and $(\D,\0,\D)$ described in Corollaries~\ref{LemD} and \ref{LemC}, where $\D$ is a quasi-pointed category and $\C$ is a category satisfying the dual condition.
\end{itemize}
\end{theorem}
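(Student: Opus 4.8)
The plan is to prove the cycle (a) $\Rightarrow$ (b) $\Rightarrow$ (c) $\Rightarrow$ (a), since almost all the substantive work is already in hand: \corx\ref{corequivtf} and \lemx\ref{LemE} convert an abstract rectangular pretorsion theory into a concrete ``product'' form, while \thex\ref{TheB} together with Corollaries~\ref{LemD} and \ref{LemC} handle products and the two building-block pretorsion theories. I expect (b) $\Rightarrow$ (c) to be a near-tautology, (a) $\Rightarrow$ (b) to carry the real content, and the only ingredient not already isolated as a cited result to be the invariance of rectangularity under equivalences of pretorsion theories, which is needed to close the cycle at (c) $\Rightarrow$ (a).

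For (a) $\Rightarrow$ (b): assuming $(\C,\T,\F)$ rectangular with $\C$ non-empty, I would first note that $\T\cap\F$, and hence $\T$ and $\F$, are non-empty. By \corx\ref{corequivtf}, $(\C,\T,\F)$ is then equivalent, as a pretorsion theory, to $(\T\x\F,\,\T\x(\T\cap\F),\,(\T\cap\F)\x\F)$. Now \lemx\ref{LemE} provides an object $Z\in\T\cap\F$ that is terminal in $\T$ and initial in $\F$; since terminal (resp.\ initial) objects are unique up to isomorphism, this identifies $\T\cap\F$, viewed inside the second factor, with the subcategory of initial objects of $\F$, and viewed inside the first factor, with the subcategory of terminal objects of $\T$. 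Since \lemx\ref{LemE} also says $\F$ is quasi-pointed and $\T$ satisfies the dual condition, the displayed pretorsion theory is exactly one of the shape treated in \thex\ref{TheA}, with $\mathbb{C}=\T$ and $\mathbb{D}=\F$. This gives (b).

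For (b) $\Rightarrow$ (c): if $\mathbb{D}$ is quasi-pointed and $\mathbb{C}$ satisfies the dual condition, then $(\mathbb{C},\mathbb{C},\1)$ and $(\mathbb{D},\0,\mathbb{D})$ are pretorsion theories by Corollaries~\ref{LemD} and \ref{LemC}, and by \thex\ref{TheB} their product is exactly $(\mathbb{C}\x\mathbb{D},\,\mathbb{C}\x\0,\,\1\x\mathbb{D})$; hence anything equivalent to the latter is equivalent to such a product. For (c) $\Rightarrow$ (a): those same corollaries say the two building blocks are rectangular, and \thex\ref{TheB} says a product of rectangular pretorsion theories is rectangular, so (c) presents $(\C,\T,\F)$ as equivalent in $\PTors$ to a rectangular pretorsion theory. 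To finish, I would show rectangularity is invariant under equivalences of pretorsion theories. Such an equivalence $\Lambda\:\C\approx\D$ preserves torsion and torsion-free objects, hence (as in the proof of \prox\ref{propequivpretors}) null morphisms, hence by \lemx\ref{lemmaequiv} short exact sequences, and therefore the torsion and torsion-free parts of objects up to canonical isomorphism; it also restricts to equivalences $\T\approx\T'$, $\F\approx\F'$. This fits the canonical functors $\Gamma\:\C\to\T\x\F$ and $\Delta\:\D\to\T'\x\F'$ into a square with $\Lambda$ on the left and the restriction $\Lambda\x\Lambda$ on the right, commuting up to natural isomorphism (the $2$-cell $\lambda$ of \conx\ref{conslambda}). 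As $\Lambda$, $\Lambda\x\Lambda$ and $\Delta$ are equivalences, so is $\Gamma$, i.e.\ $(\C,\T,\F)$ is rectangular.

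The main obstacle, in the sense of being the one step not already packaged as a cited result, is this last invariance: \conx\ref{conslambda} and \prox\ref{propalphacomp} are stated with a rectangularity hypothesis, so one must observe that the construction of the comparison $2$-cell $\lambda$ uses only that $\Lambda$ is a $1$-cell of $\PTors$ and the uniqueness up to isomorphism of short exact sequences from torsion to torsion-free objects, not that $\Gamma$ or $\Delta$ is an equivalence. Everything else is a repackaging of \corx\ref{corequivtf}, \lemx\ref{LemE}, \thex\ref{TheB} and Corollaries~\ref{LemD} and \ref{LemC}; the remaining checks — the non-emptiness bookkeeping, matching the three distinguished subcategories on the nose, and transporting the equivalence along the canonical isomorphism of categories underlying the product in \thex\ref{TheB} — are routine.
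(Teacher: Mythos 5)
Your proposal is correct and follows essentially the same route as the paper: the same cycle of implications, with (a)\,$\Rightarrow$\,(b) obtained from \corx\ref{corequivtf} and \lemx\ref{LemE}, and (b)\,$\Rightarrow$\,(c)\,$\Rightarrow$\,(a) from \thex\ref{TheB} and Corollaries~\ref{LemD} and \ref{LemC}. The only difference is that you spell out the invariance of rectangularity under equivalences of pretorsion theories needed to close the cycle at (c)\,$\Rightarrow$\,(a), a point the paper leaves implicit; your argument for it (the comparison square built as in \conx\ref{conslambda} exhibits $\Gamma$ as an equivalence whenever $\Delta$ is) is sound.
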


\begin{proof} First, let us note that (b)$\Rightarrow$(c) is obvious and (c)$\Rightarrow$(a) holds by Theorem~\ref{TheA}. It then suffices to prove
(a)$\Rightarrow$(b). Suppose $(\mathcal{C},\mathcal{T},\mathcal{F})$ is a rectangular pretorsion theory. By \corx\ref{corequivtf}, the canonical equivalence $\Gamma\:\mathcal{C}\to \mathcal{T}\times\mathcal{F}$ lifts to an equivalence of pretorsion theories between $(\C,\T,\F)$ and $$(\T\x\F,\hspace{0.1ex}\T\x(\T\cap \F),(\T\cap\F)\x \F).$$
By \lemx\ref{LemE}, all objects in $\T\cap \F$ are isomorphic to an object $Z$ which is terminal in $\T$ and initial in $\F$. Moreover, again by \lemx\ref{LemE}, $\F$ is quasi-pointed and $\T$ satisfies the dual condition. So $(\T\x\F,\hspace{0.1ex}\T\x(\T\cap \F),(\T\cap\F)\x \F)$ is in fact of the form described in Theorem~\ref{TheA}.
\end{proof}

\section{The pointed case}

In this section, we focus more on the pointed version of rectangular pretorsion theories, that we call ``rectangular torsion theories''. Interestingly, a rectangular torsion theory is automatically a torsion theory in the sense of \cite{GJ20} (the same is not true for non-rectangular pointed pretorsion theories). We aim to prove a monadicity result for rectangular torsion theories. We will present this as a corollary of a more general monadicity result, for a symmetrical version of rectangular pretorsion theories.

\begin{definition}
A \predfn{rectangular torsion theory} is a rectangular pretorsion theory $(\C,\T,\F)$ where $\C$ is a pointed category, i.e.\ has zero object $0$, and $\mathcal{Z}=\T\cap\F=\0$, where $\0$ is the subcategory of zero objects of $\C$.
\end{definition}

\begin{proposition}\label{proppointedcase}
    For a pretorsion theory $(\C,\T,\F)$ where $\C$ is a non-empty category, the following conditions are equivalent:
    \begin{itemize}
        \item[(a)] $(\C,\T,\F)$ is a rectangular torsion theory.
        \item[(b)] $(\C,\T,\F)$ is equivalent to a product of pretorsion theories $(\C,\C,\0)$ and $(\D,\0,\D)$ described in Corollaries~\ref{LemD} and \ref{LemC}, where both $\C$ and $\D$ are pointed categories.
    \end{itemize}
\end{proposition}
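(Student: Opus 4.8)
The plan is to deduce this from the general characterization of rectangular pretorsion theories already established in \thex\ref{theorcharactcart}, by checking that adding pointedness on each side translates precisely into the extra conditions appearing in (b). The implication (b)$\Rightarrow$(a) is the easy direction: if both $\C$ and $\D$ are pointed, then by \corx\ref{LemD} (applied to the pointed category $\C$, whose terminal objects are exactly its zero objects) the triple $(\C,\C,\0)$ is a rectangular pretorsion theory, and symmetrically $(\D,\0,\D)$ is one by \corx\ref{LemC}; note that in a pointed category every morphism to the terminal (= zero) object is an epimorphism, since it is a split epimorphism, and dually every morphism from the initial object is a (split) monomorphism, so the hypotheses of those corollaries are automatic. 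By \thex\ref{TheB} the product of these two rectangular pretorsion theories is again a rectangular pretorsion theory, and its base category is $\C\x\D$, which is pointed with zero object $(0,0)$. The intersection of its torsion and torsion-free subcategories is $\0\x\0$, which is exactly the subcategory of zero objects of $\C\x\D$. Hence any pretorsion theory equivalent to such a product is a rectangular torsion theory, since an equivalence of pretorsion theories preserves torsion and torsion-free objects and hence their intersection, and it transports the property of being a zero object.

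For (a)$\Rightarrow$(b), I would start from \thex\ref{theorcharactcart}: a rectangular pretorsion theory $(\C,\T,\F)$ with $\C$ non-empty is equivalent, via $\Gamma$ and \corx\ref{corequivtf}, to the pretorsion theory
$$(\T\x\F,\hspace{0.1ex}\T\x(\T\cap\F),(\T\cap\F)\x\F),$$
and by \lemx\ref{LemE} all objects of $\T\cap\F$ are isomorphic to a single object $Z$ which is terminal in $\T$ and initial in $\F$, with $\F$ quasi-pointed and $\T$ satisfying the dual condition. The key observation is that the rectangular torsion theory hypothesis adds that $\T\cap\F=\0$ is the subcategory of zero objects of $\C$; under the equivalence $\Gamma$ this forces $Z$ to be \emph{both} terminal and initial in $\C$, hence a zero object. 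I would then argue that this makes $Z$ a zero object in $\T$: it is already terminal in $\T$ by \lemx\ref{LemE}, and it is initial in $\T$ because $Z$ is initial in $\C$ and $\T$ is a full subcategory containing $Z$, so the unique morphism $Z\to T$ in $\C$ lands in $\T$ for each $T\in\T$ and is the unique such. Symmetrically $Z$ is a zero object in $\F$. Therefore $\T$ and $\F$ are pointed categories, and in a pointed category every morphism to the zero object is a (split) epimorphism while every morphism from the zero object is a (split) monomorphism, so $(\T,\T,\0_\T)$ is the pretorsion theory of \corx\ref{LemD} and $(\F,\0_\F,\F)$ is the one of \corx\ref{LemC}. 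It remains to identify $(\T\x(\T\cap\F),(\T\cap\F)\x\F)$ on $\T\x\F$ with the product of these two: since $\T\cap\F=\mathbf{Z}$ is the subcategory on objects isomorphic to $Z$, which inside $\T$ is exactly $\0_\T$ and inside $\F$ is exactly $\0_\F$, the triple $(\T\x\F,\T\x\0_\T,\0_\F\x\F)$ is literally the product pretorsion theory $(\T,\T,\0_\T)\times(\F,\0_\F,\F)$ in the sense of \thex\ref{TheB}, and $(\C,\T,\F)$ is equivalent to it. This gives (b) with the pointed categories $\C$ and $\D$ of the statement taken to be $\T$ and $\F$.

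The one point that needs a little care — and which I expect to be the main obstacle — is the passage from ``$\T\cap\F$ consists of zero objects of $\C$'' to ``$Z$ is a zero object of $\T$ and of $\F$ separately''. A priori \lemx\ref{LemE} only gives that $Z$ is terminal in $\T$ and initial in $\F$; one must additionally use that $Z$ is a zero object of $\C$ to get the other universal property on each side. The argument above (fullness of $\T$ and $\F$ in $\C$, together with the fact that the unique morphism out of, resp.\ into, $Z$ in $\C$ necessarily lies in the subcategory since the subcategory is full and contains both endpoints) handles this, but it is worth spelling out, since it is exactly where the pointedness hypothesis is consumed. Everything else is routine bookkeeping with \thex\ref{TheB}, \thex\ref{theorcharactcart}, and \corx\ref{corequivtf}, plus the observation that an equivalence of pretorsion theories — being in particular an equivalence of categories preserving the null ideal — preserves terminal and initial objects and hence the property of being pointed.
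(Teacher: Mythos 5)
Your proposal is correct and follows essentially the same route as the paper: both directions reduce to Theorem~\ref{theorcharactcart}, with the key step in (a)$\Rightarrow$(b) being that $Z\in\T\cap\F=\0$ is a zero object of $\C$ and hence, by fullness of $\T$ and $\F$, a zero object of each of $\T$ and $\F$, which (combined with Lemma~\ref{LemE}) upgrades the quasi-pointedness to pointedness. The only cosmetic remark is that you do not need the equivalence $\Gamma$ to see that $Z$ is a zero object of $\C$ --- this is immediate from the definition of rectangular torsion theory, since $Z\in\T\cap\F=\0$.
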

\begin{proof}
    We prove $(a)\aR{}(b)$. By \lemx\ref{LemE}, all objects in $\mathcal{T}\cap\mathcal{F}$ are isomorphic to an object $Z$ which is terminal in $\mathcal{T}$ and initial in $\mathcal{F}$. Since $(\C,\T,\F)$ is a rectangular torsion theory, $Z$ needs to be a zero object $0$ of $\C$. But then $0\in \T\cont \C$, whence it needs to be an initial object of $\T$. And we had already proved that $0$ is terminal in $\T$. So $0$ is a zero object in $\T$, and analogously also a zero object in $\F$. We conclude by \thex\ref{theorcharactcart}.

    We now prove $(b)\aR{}(a)$. Of course, by \thex\ref{theorcharactcart}, the product $(\C\x \D,\mathbb{C}\times \mathbf{0},\mathbf{0}\times\mathbb{D})$ of $(\C,\C,\0)$ and $(\D,\0,\D)$ is a rectangular pretorsion theory. We then have that $(0,0)$ is a zero object of $\C\x\D$, because it is a zero object componentwise. So $\C\x\D$ is pointed. Finally, notice that $(\C\x \0) \cap (\0\x \D)=\0\x \0$, which coincides with the subcategory of zero objects of $\C\x\D$.
\end{proof}

We can similarly establish pointed version of Theorem~\ref{theorcharactcart}:

\begin{theorem}\label{theorcharactcartpointed}
For a torsion theory $(\mathcal{C},\mathcal{T},\mathcal{F})$, the following conditions are equivalent:
\begin{itemize}
\item[(a)] $(\mathcal{C},\mathcal{T},\mathcal{F})$ is a rectangular torsion theory.

\item[(b)] $(\mathcal{C},\mathcal{T},\mathcal{F})$ is equivalent to a torsion theory of the form $(\C\x \D,\mathbb{C}\times \mathbf{0},\mathbf{0}\times\mathbb{D})$ where $\D$ and $\C$ are pointed categories.

\item[(c)] $(\mathcal{C},\mathcal{T},\mathcal{F})$ is equivalent to a product of torsion theories of the form $(\C,\C,\0)$ and $(\D,\0,\D)$.
\end{itemize}
\end{theorem}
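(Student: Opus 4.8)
The plan is to mirror the proof of Theorem~\ref{theorcharactcart} almost verbatim, simply keeping track of pointedness throughout. The equivalences (b)$\Rightarrow$(c) and (c)$\Rightarrow$(a) are the easy directions. For (b)$\Rightarrow$(c): a torsion theory of the form $(\C\x\D,\mathbb{C}\times\mathbf{0},\mathbf{0}\times\mathbb{D})$ is visibly the product, in the sense of \thex\ref{TheB}, of $(\C,\C,\0)$ and $(\D,\0,\D)$ — here I would note that since $\C$ and $\D$ are pointed, their subcategories of zero objects are exactly $\0$, so the pieces are indeed of the stated form and are torsion theories (they are pretorsion theories by Corollaries~\ref{LemD} and \ref{LemC}, and pointedness makes the intersection the subcategory of zero objects). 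For (c)$\Rightarrow$(a): by Proposition~\ref{proppointedcase}, a product of $(\C,\C,\0)$ and $(\D,\0,\D)$ with $\C,\D$ pointed is a rectangular torsion theory, and being a rectangular torsion theory is clearly invariant under equivalence of pretorsion theories (an equivalence preserves pointedness of the base category, the subcategory of zero objects, and the property that $\Gamma$ is an equivalence).

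For the main implication (a)$\Rightarrow$(b), I would argue as follows. Suppose $(\C,\T,\F)$ is a rectangular torsion theory. Then $\C$ is in particular non-empty, so Proposition~\ref{proppointedcase} applies and gives that $(\C,\T,\F)$ is equivalent, as a pretorsion theory, to a product $(\C'\x\D',\mathbb{C}'\times\mathbf{0},\mathbf{0}\times\mathbb{D}')$ of $(\C',\C',\0)$ and $(\D',\0,\D')$ with $\C'$ and $\D'$ pointed. This product is exactly of the form appearing in~(b). It remains only to observe that the equivalence of pretorsion theories furnished by Proposition~\ref{proppointedcase} is in fact an equivalence of torsion theories, which is automatic: the base categories of both are pointed, and the equivalence $\C\approx\C'\x\D'$ preserves zero objects (equivalences preserve initial and terminal objects), so it restricts to an equivalence $\0_\C\approx\0_{\C'\x\D'}$ matching up the $\mathcal{Z}$-parts. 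Thus $(\C,\T,\F)$ is equivalent to a torsion theory of the form described in~(b).

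The only point requiring a little care — and the closest thing to an obstacle — is checking that the notion of ``equivalence'' of Definition~\ref{defequiv}, stated for pretorsion theories, automatically upgrades to an equivalence respecting the extra structure of a torsion theory (pointedness and $\mathcal{Z}=\0$). But this is immediate: Definition~\ref{defequiv} already requires preservation of torsion and torsion-free objects, hence of the intersection $\mathcal{Z}$, and an equivalence of categories preserves zero objects; so once both sides are known to be torsion theories, nothing further need be verified. Consequently the whole argument reduces to invoking Proposition~\ref{proppointedcase} together with Theorem~\ref{theorcharactcart}, and I expect the proof to be only a few lines.
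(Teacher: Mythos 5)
Your proposal is correct and matches the paper's intended argument: the paper gives no written proof beyond the remark that the pointed version is established ``similarly'' to Theorem~\ref{theorcharactcart}, and Proposition~\ref{proppointedcase} already supplies the equivalence (a)$\Leftrightarrow$(c), with (b)$\Leftrightarrow$(c) following from Theorem~\ref{TheB} and the pointedness checks carried out in the proof of that proposition. Your expansion, including the observation that an equivalence of pretorsion theories between pointed bases automatically respects $\mathcal{Z}=\mathbf{0}$, is exactly the bookkeeping the paper leaves implicit.
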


\begin{proposition}\label{propGJpointed}
Every torsion theory is a torsion theory in the sense of \cite{GJ20}. A torsion theory in the sense of \cite{GJ20} that is a rectangular pretorsion theory is automatically a rectangular torsion theory.
\end{proposition}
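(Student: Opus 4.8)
The plan is to unwind the definition of torsion theory from \cite{GJ20} and to observe that, in both directions, the only substantive point is the identification of the relevant ``trivial object'' ideal with the ideal of zero morphisms. For the first assertion, start from a torsion theory $(\C,\T,\F)$ in the present sense: $\C$ is pointed and $\mathcal{Z}=\T\cap\F=\0$, so the ideal $\mathcal{N}$ of morphisms factoring through $\mathcal{Z}$ is exactly the ideal of zero morphisms, and the relative kernels, cokernels and short exact sequences of (T1)--(T2) coincide with the ordinary pointed ones. Thus (T1) reads ``every morphism from a $\T$-object to an $\F$-object is zero'' and (T2) reads ``every object fits in a short exact sequence $0\to T\to X\to F\to 0$ with $T\in\T$ and $F\in\F$'', which is the defining data of \cite{GJ20}. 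Fullness and repleteness of $\T,\F$ come for free (they are part of being a pretorsion theory), and any remaining closure condition in \cite{GJ20} can be checked directly: for a monomorphism $m\colon S\to F$ with $F\in\F$, composing the chosen sequence $T^S\aar{\ell^S}S\aar{r^S}F^S$ with $m$ makes $m\c\ell^S$ a morphism from $\T$ to $\F$, hence zero; as $m$ is mono, $\ell^S=0$, so $r^S$ is a cokernel of the zero morphism $T^S\to S$, i.e.\ (up to isomorphism) $\id{S}$, whence $F^S\iso S$ and $S\in\F$ by repleteness. So $\F$ is closed under subobjects and, dually, $\T$ under quotients (these may alternatively be quoted from \cite{FFG21}).

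For the converse, let $(\C,\T,\F)$ be a torsion theory in the sense of \cite{GJ20} that, moreover, is a rectangular pretorsion theory. Since rectangularity is already in the hypothesis, it suffices to check the two further ingredients in the notion of rectangular torsion theory: that $\C$ is pointed and that $\T\cap\F=\0$. The former is built into the definition of \cite{GJ20}. For the latter, if $X\in\T\cap\F$ then $\id{X}$ is a morphism from a torsion object to a torsion-free object, hence a zero morphism in the pointed setting of \cite{GJ20}, hence it factors through a zero object; this exhibits $X$ as a retract of a zero object, and a retract of a zero object is a zero object, so $X\in\0$. Conversely a zero object lies in $\T\cap\F$: applying the short-exact-sequence condition of \cite{GJ20} to $X=0$ forces the torsion and torsion-free parts of $0$ to be zero objects, which then lie in $\T$ and $\F$, so $\0\cont\T\cap\F$ by repleteness. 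Hence $\mathcal{Z}=\T\cap\F=\0$, and together with rectangularity this says exactly that $(\C,\T,\F)$ is a rectangular torsion theory.

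The step I expect to require the most care is pinning down the exact list of axioms that \cite{GJ20} imposes on a torsion theory, so as to be sure none is overlooked; the subobject/quotient closure computation above is meant to account for the ``extra'' ones, and no completeness hypothesis on $\C$ enters, since --- unlike \cite{JT07} --- \cite{GJ20} does not assume the ambient category has all kernels and cokernels. Everything else is a routine translation once $\mathcal{N}$ has been recognised as the ideal of zero morphisms.
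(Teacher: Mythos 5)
Your handling of the first assertion is essentially fine, but the second assertion contains a genuine gap, and it sits exactly where you flagged the risk: in what the definition of \cite{GJ20} actually requires. You claim that pointedness of $\C$ ``is built into the definition of \cite{GJ20}''. It is not. A torsion theory in the sense of \cite{GJ20} lives in a category equipped with an ideal of null morphisms in which, in particular, every identity $\id{X}$ admits a kernel and a cokernel relative to that ideal; the ambient category is not assumed pointed, and the null morphisms are not assumed to be zero morphisms. Establishing that $\C$ is pointed is precisely the content of the second assertion, so assuming it begs the question. Your subsequent argument that $\T\cap\F\cont\0$ collapses with it: for $X\in\T\cap\F$ you can only say that $\id{X}$ is \emph{null}, i.e.\ factors through some object of $\T\cap\F$, which gives no information; you cannot say it factors through a zero object until you know there is one.

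The missing step is supplied in the paper as follows, and this is what you would need to insert in place of the sentence ``The former is built into the definition of \cite{GJ20}.'' By the \cite{GJ20} hypothesis, every $\id{X}$ has a kernel $k\:k(X)\to X$, and Lemma~\ref{lemmakercoker} shows $k(X)\in\T\cap\F$. By Lemma~\ref{LemE} (which is where rectangularity enters), all objects of $\T\cap\F$ are isomorphic to one object $Z$ with $\mathsf{hom}(Z,Z)$ a singleton. Hence for every $X$ there is a kernel $k\:Z\to X$ of $\id{X}$. Any morphism $r\:Z\to X$ is null (it factors through $Z\in\T\cap\F$), so it factors as $r=k\c w$ for a unique $w\:Z\to Z$; since $w=\id{Z}$, we get $r=k$, and $Z$ is initial in $\C$. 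Dually, using cokernels of identities, $Z$ is terminal. Thus $Z$ is a zero object, $\T\cap\F=\0$, and $(\C,\T,\F)$ is a rectangular torsion theory. Once this is in place, your remaining observations (e.g.\ that $\0\cont\T\cap\F$ by repleteness) go through.
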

\begin{proof}
    Of course, every torsion theory in our sense is a torsion theory in the sense of \cite{GJ20}. Let $(\C,\T,\F)$ be a pretorsion theory in the sense of \cite{GJ20} which is rectangular. We thus have kernels $k(X)\aar{k}X$ and cokernels $X\aar{c}c(X)$ of all identities $\id{X}$. We prove that $(\C,\T,\F)$ is a torsion theory. By \lemx\ref{lemmakercoker}, $k(X)$ and $c(X)$ need to be objects in $\T\cap \F$. But by \lemx\ref{LemE}, all objects in $\mathcal{T}\cap\mathcal{F}$ are isomorphic to a fixed object $Z$ which is terminal in $\mathcal{T}$ and initial in $\mathcal{F}$. So both $k(X)$ and $c(X)$ are isomorphic to $Z$. We prove that $Z$ is a zero object in $\C$. By the argument above, for every $X\in \C$, there exists a morphism $k\:Z\to X$ which is the kernel of $\id{X}$. Let $r$ be another morphism $Z\to X$. Since $\id{}\c r$ is null, there exists a unique morphism $w\:Z\to Z$ such that $k\c w=r$. But by the proof of \lemx\ref{LemE}, the only morphism from $Z$ to $Z$ is the identity. So that $k=r$. We conclude that $Z$ is initial in $\C$. Analogously, $Z$ is terminal in $\C$, as it gives the cokernels of all identities.
\end{proof}

\begin{remark}
    Let $(\C,\T,\F)$ be a rectangular pretorsion theory. We showed in the proof of \lemx\ref{LemE} that $\T$ has cokernels of all identities, and $\F$ has kernels of all identities. These are given by some fixed $Z$ such that all objects in $\mathcal{T}\cap\mathcal{F}$ are isomorphic to $Z$ and such that $Z$ is terminal in $\T$ and initial in $\F$. However, $\C$ only has kernels and cokernels of all identities when it is pointed, thanks to \prox\ref{propGJpointed}.
\end{remark}

\begin{definition}
    A rectangular pretorsion theory $(\mathcal{C},\mathcal{T},\mathcal{F})$ is called \dfn{symmetrical} if it is equivalent to a product of pretorsion theories $(\C,\C,\1)$ and $(\D,\0,\D)$ described in Corollaries~\ref{LemD} and \ref{LemC}, where both $\C$ and $\D$ are quasi-pointed and both satisfy the dual condition.
\end{definition}

    Symmetrical rectangular pretorsion theories form a 2-category $\SCartPTors$ where 
\begin{itemize}
\item $1$-cells $(\mathcal{C},\mathcal{T},\mathcal{F})\to (\mathcal{C}',\mathcal{T}',\mathcal{F}')$ are 1-cells between rectangular pretorsion theories that also preserve $0$ and $1$. 

\item $2$-cells are 2-cells between 1-cells of pretorsion theories, i.e.\ natural transformations between functors.
\end{itemize}

\begin{definition}
    We call \dfn{bi-quasi-pointed} those categories that are quasi-pointed and also satisfy the dual condition.
\end{definition}

    Bi-quasi-pointed categories form a 2-category $\Biqpointcat$ where 
    \begin{itemize}
\item $1$-cells are functors that preserve $0$ and $1$.
\item $2$-cells are natural transformations between those functors.
\end{itemize}

\begin{remark}\label{rembiquasipointed}
    A category is bi-quasi-pointed if and only if it has initial object 0 and terminal object 1, and the unique morphism $0\to 1$ is both mono and epi. Of course, every pointed category is bi-quasi-pointed.
\end{remark}

\begin{proposition}\label{propcharactsymm}
    For a rectangular pretorsion theory $(\C,\T,\F)$ where $\C$ is a non-empty category, the following conditions are equivalent:
    \begin{itemize}
        \item[(a)] $(\C,\T,\F)$ is symmetrical.
        \item[(b)] $\C$ is bi-quasi-pointed.
    \end{itemize}
\end{proposition}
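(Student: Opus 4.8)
The plan is to prove the two implications separately, using in each direction the decomposition of a rectangular pretorsion theory as a product from Corollary~\ref{corequivtf} together with the identification of $\T\cap\F$ in Lemma~\ref{LemE}, and the reformulation (Remark~\ref{rembiquasipointed}) of bi-quasi-pointedness as: there is an initial object $0$, a terminal object $1$, and the unique map $0\to 1$ is both mono and epi. For (a)$\Rightarrow$(b), suppose $(\C,\T,\F)$ is symmetrical, so $\C$ is equivalent to a product $\mathbb{A}\times\mathbb{B}$ of two bi-quasi-pointed categories. First I would check that such a product is itself bi-quasi-pointed: it has an initial object $(0,0)$ and a terminal object $(1,1)$ formed componentwise, and the unique morphism $(0,0)\to(1,1)$ is $(0\to 1,0\to 1)$, hence both mono and epi by the componentwise characterisation of monomorphisms and epimorphisms in a product (observation (G) in the proof of Lemma~\ref{LemA}). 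Since the existence of initial and terminal objects and the mono/epi conditions on the comparison map are invariant under equivalences of categories, $\C$ is bi-quasi-pointed.

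For (b)$\Rightarrow$(a), suppose $\C$ is bi-quasi-pointed. By Corollary~\ref{corequivtf} the canonical equivalence $\Gamma\colon\C\to\T\times\F$ identifies $(\C,\T,\F)$ with the product pretorsion theory $(\T\times\F,\,\T\times(\T\cap\F),\,(\T\cap\F)\times\F)$, and by Lemma~\ref{LemE} the subcategory $\T\cap\F$ is, inside $\T$, the replete subcategory of terminal objects and, inside $\F$, the replete subcategory of initial objects, while $\T$ satisfies the dual condition and $\F$ is quasi-pointed. Hence $(\C,\T,\F)$ is equivalent to the product of $(\T,\T,\1)$ as in Corollary~\ref{LemD} and $(\F,\0,\F)$ as in Corollary~\ref{LemC}, and it only remains to promote ``$\F$ quasi-pointed'' and ``$\T$ dual-condition'' to ``both $\T$ and $\F$ bi-quasi-pointed''. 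This is where the hypothesis is used: since $\C$ is non-empty, $\T$ and $\F$ are non-empty, so the bi-quasi-pointed category $\C\approx\T\times\F$ has initial object $(0_\T,0_\F)$ and terminal object $(1_\T,1_\F)$ with the displayed components initial, resp.\ terminal, in $\T$ and in $\F$; and the unique map between them is mono and epi, hence mono and epi in each component, so $0_\T\to 1_\T$ and $0_\F\to 1_\F$ are each mono and epi. By Remark~\ref{rembiquasipointed}, both $\T$ and $\F$ are bi-quasi-pointed, and the above product presentation witnesses that $(\C,\T,\F)$ is symmetrical.

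The only step demanding a little care is the passage from a bi-quasi-pointed product to the bi-quasi-pointedness of each factor: one must observe that an initial (resp.\ terminal) object of $\mathbb{A}\times\mathbb{B}$ necessarily has initial (resp.\ terminal) components --- which uses that a product of two sets is a singleton only when both are, hence the non-emptiness assumption on $\C$ --- and then apply the componentwise behaviour of monos and epis from the proof of Lemma~\ref{LemA}. Everything else is a direct invocation of Corollary~\ref{corequivtf}, Lemma~\ref{LemE}, and Remark~\ref{rembiquasipointed}.
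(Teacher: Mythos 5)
Your proof is correct, and the easy direction (a)$\Rightarrow$(b) is identical to the paper's. For (b)$\Rightarrow$(a) you use the same skeleton as the paper --- reduce to the product presentation $(\T\x\F,\,\T\x(\T\cap\F),\,(\T\cap\F)\x\F)$ via \corx\ref{corequivtf} and identify $\T\cap\F$ with the terminal objects of $\T$ and the initial objects of $\F$ via \lemx\ref{LemE} --- but you verify bi-quasi-pointedness of the factors differently. The paper works inside $\C$: it runs the short exact sequence of the initial object $0$ to show $0\in\T$ (and dually $1\in\F$), and then gets the epimorphisms into $Z$ from the cokernel computation in the proof of \lemx\ref{LemE}. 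You instead transfer bi-quasi-pointedness of $\C$ across the equivalence $\Gamma\:\C\to\T\x\F$ and then split everything componentwise: an initial (terminal) object of a product has initial (terminal) components, and the comparison map $(0_\T,0_\F)\to(1_\T,1_\F)$ is mono and epi iff each component is, by observation (G). This is a slightly slicker verification, as it avoids the short-exact-sequence bookkeeping and never needs to locate $0$ and $1$ of $\C$ inside $\T$ and $\F$; the paper's route has the side benefit of exhibiting explicitly that the initial object of $\C$ is a torsion object and the terminal object is torsion-free. One cosmetic remark: the fact that a product of two sets is a singleton only when both are holds unconditionally; the non-emptiness of $\C$ is really used to guarantee that $\T$ and $\F$ are non-empty (so that \lemx\ref{LemE} applies and the hom-sets you factor are inhabited), not for that set-theoretic observation itself.
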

\begin{proof}
    We prove $(a)\aR{}(b)$. Assume that $(\C,\T,\F)$ is equivalent to a product of pretorsion theories $(\C,\C,\1)$ and $(\D,\0,\D)$ where both $\C$ and $\D$ are bi-quasi-pointed. The pair $(0_{\C},0_{\D})$ with $0_{\C}$ initial in $\C$ and $0_{\D}$ initial in $\D$ is initial in $\C\x\D$. Analogously, $(1_{\C},1_{\D})$ is terminal in $\C\x\D$. The unique morphism $(0_{\C},0_{\D})\to (1_{\C},1_{\D})$ is both mono and epi, because it is so componentwise.

    We now prove $(b)\aR{}(a)$. By the proof of \thex\ref{theorcharactcart}, $(\C,\T,\F)$ is equivalent to the product of pretorsion theories $(\T,\T,\T\cap \F)$ and $(\F,\T\cap\F,\F)$, and we know that $\F$ is quasi-pointed and $\T$ satisfies the dual condition. By assumption, $\C$ is bi-quasi-pointed. We show that $0\in \T$ and $1\in \F$. By \lemx\ref{LemE}, all objects in $\T\cap \F$ are isomorphic to an object $Z$ which is terminal in $\T$ and initial in $\F$. Consider the short exact sequence $T^0\aar{\ell} 0\aar{r}F^0$ associated to $0$. The morphism $r$ is the unique morphism from $0$ to $F^0$, so it needs to coincide with the composite $0\aar{r^1} Z\aar{r^2} F^0$, where $r^1$ is given by the fact that $0$ is initial in $\C$ and $r^2$ by the fact that $Z$ is initial in $\F$. Then $r$ is null, whence $\ell$ needs to be an isomorphism (as identities give kernels of null maps). So $0\in \T$. Analogously, $1\in \F$. We thus have that $\T$ has initial object given by $0$ and terminal object given by $Z$. By the proof of \lemx\ref{LemE}, all morphisms in $\T$ going into $Z$ are epi. Moreover, all morphisms in $\T$ going out from $0$ are mono, because this is true in $\C$ by assumption. So $\T$ is bi-quasi-pointed. Analogously, $\F$ is bi-quasi-pointed. And we conclude.
\end{proof}

\begin{proposition}\label{propcatcarttors}
    Every rectangular torsion theory is a symmetrical rectangular pretorsion theory. Moreover, the full sub-2-category of $\CartPTors$ on rectangular torsion theories coincides with the full sub-2-category of $\SCartPTors$ of rectangular torsion theories.
\end{proposition}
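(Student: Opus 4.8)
The plan is to derive both assertions from results already proved in this section. For the first assertion, observe that in a rectangular torsion theory $(\C,\T,\F)$ the category $\C$ is pointed, hence in particular non-empty (it has a zero object) and, by Remark~\ref{rembiquasipointed}, bi-quasi-pointed. Thus $(\C,\T,\F)$ is a rectangular pretorsion theory on a non-empty bi-quasi-pointed category, so Proposition~\ref{propcharactsymm} applies and shows that $(\C,\T,\F)$ is symmetrical. This proves that every rectangular torsion theory is a symmetrical rectangular pretorsion theory, and in particular an object of $\SCartPTors$.

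For the second assertion, I would check separately that the two full sub-2-categories agree on objects, on $2$-cells, and on $1$-cells. The objects agree by the first assertion: in both cases they are precisely the rectangular torsion theories. The $2$-cells agree trivially, since in $\PTors$, in $\CartPTors$ and in $\SCartPTors$ alike a $2$-cell is just a natural transformation between the underlying functors. So the content reduces to showing that the $1$-cells coincide, i.e.\ that a $1$-cell $G\:(\C,\T,\F)\to(\C',\T',\F')$ of $\CartPTors$ between two rectangular torsion theories is automatically a $1$-cell of $\SCartPTors$, namely that it preserves $0$ and $1$; the reverse inclusion is immediate, as every $1$-cell of $\SCartPTors$ is by definition a $1$-cell of rectangular pretorsion theories.

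This last point is the only genuine step, and it is short. Since $G$ preserves torsion objects and torsion-free objects, it maps $\T\cap\F$ into $\T'\cap\F'$; but for rectangular torsion theories $\T\cap\F=\0$ and $\T'\cap\F'=\0'$ are the subcategories of zero objects, so $G$ carries zero objects to zero objects. As $\C$ and $\C'$ are pointed, their zero objects are simultaneously the initial and the terminal object, so preserving the zero object is exactly preserving both $0$ and $1$ in the sense demanded by $\SCartPTors$. Hence the $1$-cells coincide and the two full sub-2-categories are equal.

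I do not expect a real obstacle: once Proposition~\ref{propcharactsymm} and Remark~\ref{rembiquasipointed} are available, everything is bookkeeping. The only place requiring a moment's care is the observation that ``preserving $0$ and $1$'' collapses to ``preserving the zero object'' in the pointed setting, which is precisely why a plain $1$-cell of rectangular pretorsion theories between rectangular torsion theories already qualifies as a $1$-cell of the symmetrical $2$-category.
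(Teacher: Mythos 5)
Your proposal is correct and follows essentially the same route as the paper's proof: symmetry of a rectangular torsion theory via Remark~\ref{rembiquasipointed} and Proposition~\ref{propcharactsymm}, and agreement of $1$-cells because preservation of torsion and torsion-free objects forces preservation of $\T\cap\F=\0$, which in the pointed setting is exactly preservation of $0$ and $1$. The only difference is that you spell out the (trivial) agreement of objects and $2$-cells explicitly, which the paper leaves implicit.
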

\begin{proof}
    Every pointed category is bi-quasi-pointed, by \remx\ref{rembiquasipointed}, so every rectangular torsion theory is symmetrical thanks to \prox\ref{propcharactsymm}. Moreover, every 1-cell in $\CartPTors$ between rectangular torsion theories preserves the zero object. Indeed, it needs to preserve torsion objects and torsion-free objects, whence also the objects in the intersection $\T\cap\F=\0$. So every every 1-cell in $\CartPTors$ between rectangular torsion theories is a 1-cell in $\SCartPTors$.
\end{proof}

We write $\CartTors$ for the full sub-2-category of $\CartPTors$ of rectangular torsion theories.

We prove the following monadicity result for symmetrical rectangular pretorsion theories. This will then yield a monadicity result for rectangular torsion theories. We refer the reader to \cite{L02} for the definition of $2$-monad and of the $2$-category of pseudo-algebras for a $2$-monad.

\begin{theorem}\label{teormonad}
    The 2-category $\SCartPTors$ of symmetrical rectangular pretorsion theories is 2-equivalent, over $\Biqpointcat$, to the 2-category $\Alg+{M}$ of pseudo-algebras (and pseudo-morphisms between them) for the (strict) 2-monad
    \begin{fun}
	M & \: & \Biqpointcat & \too & \Biqpointcat \\[1ex]
    && \tcv*{\C}{\D}{G}{H}{\alpha} & \mto & \tcv*{\C\x\C}{\D\x\D}{G\x G}{H\x H}{\alpha\x \alpha}
\end{fun}
\end{theorem}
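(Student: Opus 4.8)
The plan is to build, over the evident forgetful $2$-functors to $\Biqpointcat$, a $2$-functor $\Phi\colon\SCartPTors\to\Alg+{M}$ and a $2$-functor $\Psi\colon\Alg+{M}\to\SCartPTors$, and then to produce pseudonatural equivalences $\Psi\Phi\simeq\Id{}$ and $\Phi\Psi\simeq\Id{}$. To define $\Phi$, take a symmetrical rectangular pretorsion theory $(\C,\T,\F)$. By \prox\ref{propcharactsymm} and its proof, $\C$, $\T$ and $\F$ are bi-quasi-pointed and the canonical equivalence $\Gamma\colon\C\to\T\x\F$ sends $0,1$ to $0,1$; we take it to be an adjoint equivalence with chosen inverse $\Gamma^{-1}$. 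The product $\T\x\F$ carries the obvious \emph{strict} $M$-algebra structure in $\Biqpointcat$, namely $((P,Q),(P',Q'))\mto(P,Q')$ (the laws are immediate: both legs of the associativity square send a quadruple $((P_i,Q_i))_{i=1}^{4}$ to $(P_1,Q_4)$). Transporting this structure along $\Gamma$, with coherence cells built from the unit and counit of $\Gamma\dashv\Gamma^{-1}$, equips $\C$ with a pseudo-$M$-algebra structure whose structure functor is $(X,Y)\mto\Gamma^{-1}(T^X,F^Y)$ and which preserves $0,1$. On a $1$-cell $G\colon(\C,\T,\F)\to(\D,\T',\F')$, the restrictions $G|_{\T}$ and $G|_{\F}$ assemble into a functor $G|_{\T}\x G|_{\F}$ that is a \emph{strict} morphism of the strict algebras on $\T\x\F$ and $\T'\x\F'$ (again immediate from the formula), and combining it with the natural isomorphism $\lambda$ of \conx\ref{conslambda} upgrades $G$ to a pseudo-morphism $\Phi G$. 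On a $2$-cell $\alpha\colon G\Rightarrow H$, \prox\ref{propalphacomp} is exactly the assertion that $\alpha$ is compatible with the pseudo-morphism structures of $\Phi G$ and $\Phi H$, so it is an $M$-algebra $2$-cell. That $\Phi$ is a $2$-functor lying over $\Biqpointcat$ is then routine.

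To define $\Psi$, let $(\mathcal{X},a,\iota,\theta)$ be a pseudo-$M$-algebra, so $\mathcal{X}$ is bi-quasi-pointed, and let $\T,\F$ be the essential images of the functors $a(-,0)$ and $a(1,-)$ on $\mathcal{X}$. Using $\iota$ and $\theta$ one checks: $a(P,0)\iso P$ for $P\in\T$ and $a(1,Q)\iso Q$ for $Q\in\F$; the functor $X\mto(a(X,0),a(1,X))$ is an equivalence $\mathcal{X}\to\T\x\F$ with pseudo-inverse the restriction of $a$; and $a(1,-)$ exhibits $\F$ as a reflective subcategory of $\mathcal{X}$, dually $a(-,0)$ exhibits $\T$ as coreflective, so in particular the initial object of $\F$ and the terminal object of $\T$ are both $\iso a(1,0)$. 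Since bi-quasi-pointedness is invariant under equivalence of categories and passes to the factors of a product of non-empty categories, $\T$ and $\F$ are bi-quasi-pointed; hence by Corollaries~\ref{LemD} and \ref{LemC} and \thex\ref{TheB} the standard pretorsion theory on $\T\x\F$ --- the product of $(\T,\T,\mathbf{1})$ and $(\F,\mathbf{0},\F)$ --- is symmetrical rectangular, and transporting it along $X\mto(a(X,0),a(1,X))$ via \corx\ref{corolltransfer} yields a symmetrical rectangular pretorsion theory on $\mathcal{X}$ whose torsion and torsion-free subcategories are, by the computations above, exactly $\T$ and $\F$. A pseudo-morphism $(G,\bar G)$ of algebras has underlying functor $G$ that preserves $0,1$ and, through $\bar G$, identifies $Ga(-,-)$ with $a'(G-,G-)$; hence $G$ maps $\T$ into $\T'$ and $\F$ into $\F'$, and carries each chosen short exact sequence to a diagram isomorphic to the corresponding one for its image, hence to a short exact sequence, so $G$ underlies a $1$-cell of $\SCartPTors$. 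An $M$-algebra $2$-cell is in particular a natural transformation, hence a $2$-cell of $\SCartPTors$. Again $\Psi$ is a $2$-functor over $\Biqpointcat$.

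For the comparisons: in $\Psi\Phi(\C,\T,\F)$ the torsion subcategory is the essential image of $X\mto\Gamma^{-1}(T^X,F^{0})$; as $0\in\T$ in the symmetrical case, $F^{0}$ is the null object $Z$ (a cokernel of $\id{0}$; see \lemx\ref{lemmakercoker} and \lemx\ref{LemE}), and $\Gamma^{-1}(T^X,Z)$ is torsion because its reflection $r$ is null, so this essential image is $\T$ again; dually the torsion-free subcategory is $\F$, and hence $\Psi\Phi(\C,\T,\F)=(\C,\T,\F)$ with $\id{\C}$ providing the equivalence of pretorsion theories, naturally in $(\C,\T,\F)$. In $\Phi\Psi(\mathcal{X},a)$ the structure functor sends $(X,Y)$ to $a(a(X,0),a(1,Y))$, which $\theta$ makes naturally isomorphic to $a(X,Y)$ (the multiplication of $M$ sends $((X,0),(1,Y))$ to $(X,Y)$); this isomorphism is an invertible $M$-algebra $2$-cell, pseudonatural in $(\mathcal{X},a)$. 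Hence $\Phi$ and $\Psi$ form the asserted $2$-equivalence over $\Biqpointcat$.

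I expect the main obstacle to be organizational rather than conceptual: discharging the pseudo-algebra coherence axioms for the transported structure, the $2$-functoriality of $\Phi$ and $\Psi$ together with the coherence of all the pseudo-morphism data, and keeping the bookkeeping around preservation of $0$ and $1$ uniform. The genuinely non-formal ingredients --- the identities $a(P,0)\iso P$ for $P\in\T$ and $T^{\Gamma^{-1}(P,Q)}\iso P$, the reflectivity of $\F$ in $\mathcal{X}$, and the fact (\prox\ref{propcharactsymm}) that bi-quasi-pointedness of the ambient category forces symmetry --- are already in hand, together with \conx\ref{conslambda} and \prox\ref{propalphacomp} for the morphism-level data and \thex\ref{theorcharactcart} for the underlying structure theory.
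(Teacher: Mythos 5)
Your proposal is correct and follows essentially the same route as the paper: the same pseudo-algebra structure $(X,Y)\mapsto\Gamma'(T^X,F^Y)$ on one side (the paper writes out the coherence cells that your ``transport along $\Gamma\dashv\Gamma'$'' packages), the same essential-image construction of $\T$ and $\F$ from $a(-,0)$ and $a(1,-)$ on the other, with \conx\ref{conslambda} and \prox\ref{propalphacomp} playing exactly the roles you assign them, and the same two comparison isomorphisms. The only differences are presentational (e.g.\ deducing bi-quasi-pointedness of $\T$ and $\F$ by invariance under equivalence and products rather than by the paper's direct verification), so nothing further is needed.
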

\begin{proof}
We first prove that $M$ is a (strict) 2-monad. If $\C$ is a bi-quasi-pointed category, also $\C\x\C$ is such, with $0=(0,0)$ and $1=(1,1)$. The unique map from $(0,0)$ to $(1,1)$ is componentwise mono and epi, and thus mono and epi. If $G$ preserves $0$ and $1$ then also $G\x G$ preserves $0$ and $1$. It is straightforward to see that $M$ is a $2$-functor. We define the unit $\eta\:\Id{}\aR{}M$ to have component on $\C$ given by the diagonal functor $\eta_\C\:\C\to \C\x\C$. Of course $\eta_\C$ preserves $0$ and $1$, and $\eta$ is 2-natural. We then define the multiplication $\mu\:M^2\aR{}M$ to have component on $\C$ given by the projection functor
$$\mu_\C=\pi_{1,4}=\pi_1\x\pi_2\:(\C\x\C)\x(\C\x\C)\to \C\x\C$$
on the first and fourth components. Of course $\mu_C$ preserves $0$ and $1$. It is straightfoward to show that $\mu$ is a $2$-natural transformation. The unit axioms of $2$-monad 
\begin{cd}[4][6]
    M \ar[rd,bend right=20,equal]\ar[r,Rightarrow,"{M\eta}"]\& M^2 \ar[d,Rightarrow,"{\mu}"]\& M \ar[l,Rightarrow,"{\eta M}"'] \ar[ld,bend left=20,equal]\\
    \& M
\end{cd}
are satisfied because for every $X,Y\in \C\x\C$ we have
$$\pi_{1,4}((X,X),(Y,Y))=(X,Y)=\pi_{1,4}((X,Y),(X,Y))$$ (and analogously for morphisms). The other axiom
\begin{cd}[5.5][5.5]
    M^3 \ar[r,Rightarrow,"{M \mu}"] \ar[d,Rightarrow,"{\mu M}"']\& M^2 \ar[d,Rightarrow,"{\mu}"] \\
    M^2 \ar[r,Rightarrow,"{\mu}"']\& M
\end{cd}
is satisfied because for every $(X_1,\ldots,X_8)$ we have $$\pi_{1,4}((X_1,X_4),(X_5,X_8))=(X_1,X_8)=\pi_{1,4}((X_1,X_2),(X_7,X_8)).$$
So $M$ is a $2$-monad.

We now prove that the 2-category $\SCartPTors$ is 2-equivalent over $\Biqpointcat$ to the 2-category $\Alg+{M}$ of pseudo-algebras of $M$. Notice that we have a forgetful $2$-functor $\SCartPTors\to \Biqpointcat$, thanks to \prox\ref{propcharactsymm}.

We start showing that every symmetrical rectangular pretorsion theory $(\C,\T,\F)$ gives a pseudo-algebra for $M$. Let $\Gamma\: \C \to \T \x \F\:\Gamma'$ be the adjoint equivalence of categories associated to $(\C,\T,\F)$ (it can always be made into an adjoint equivalence). We construct a pseudo-algebra
$$(\C,Q\: \C \x \C \to \C, Q_\mu\: Q \circ (Q \x Q) \Rightarrow Q \circ \pi_{1,4}, Q_\eta\: \id{\C} \Rightarrow Q \circ \eta_{\C}).$$ We define the functor $Q\: \C \to \C \x \C$ as the composite 
$$\C \x \C \aar{\Gamma \x \Gamma} (\T \x \F) \x (\T \x \F) \aar{\pi_{1,4}} \T \x \F \aar{\Gamma'} \C.$$
So $Q$ sends $(X,Y)\in \C \x \C$ to $\Gamma'(T^X, F^Y)$. Notice that $Q$ preserves $0$ and $1$. Indeed, $Q(0,0)=\Gamma'(0,Z)\iso 0$, because $\Gamma'(0,Z)$ sits in a short exact sequence $0\aar{l} \Gamma'(0,Z) \aar{r} Z$ and $r$ is null so $l$ needs to be an isomorphism. Analogously $Q(1,1)= \Gamma'(Z,1)\iso 1$. 
We then define $Q_\mu\: Q \circ (Q \x Q) \Rightarrow Q \circ \pi_{1,4}$ as the pasting
\begin{eqD*}
\scalebox{0.8}{
\begin{cd}*
    {(\C \x \C) \x (\C\x \C)} \arrow[r,"(\Gamma\x \Gamma)\x (\Gamma \x \Gamma)"{inner sep= 1.5ex}] \arrow[d,"\pi_{1,4}"'] \& {((\T \x F)\x (\T \x F)) \x ((\T \x F) \x (\T \x F))} \arrow[r,"\pi_{1,4} \x \pi_{1,4}"{inner sep= 1.5 ex}] \& {(\T \x F)\x (\T \x F)} \arrow[d,"\Gamma' \x \Gamma'"] \arrow[dd,"", equal, bend right= 60, shift right =5ex]\\
    {\C \x \C} \arrow[r,"\Gamma \x \Gamma"] \& {(\T \x \F) \x (\T \x \F)} \arrow[dd,"\pi_{1,4}"'] \& {\C \x \C} \arrow[l,"\counit \x \counit"{pos=0.08, inner sep=1ex}, Rightarrow, shorten <= 1ex, shorten >= 19ex]\arrow[d,"\Gamma \x \Gamma"]  \\
    {} \& {} \& {(\T \x \F)\x (\T \x \F)} \arrow[ld,"\pi_{1,4}"] \\ 
    {} \& {\T \x \F} \arrow[r,"\Gamma'"'] \& {\C}
\end{cd}}
\end{eqD*}

So given $X_1,X_2,X_3,X_4\in \C$, $Q_{\mu}$ has component
$$(Q_{\mu})_{X_1,X_2,X_3,X_4}\: \Gamma'(T^{\Gamma'(T^{X_1}, F^{X_2})},F^{\Gamma'(T^{X_3}, F^{X_4})})\aar{\Gamma'(\counit^1_{T^{X_1},F^{X_2}}, \counit^2_{T^{X_3},F^{X_4}})} \Gamma' (T^{X_1}, F^{X_4}).$$
Moreover, we define $Q_\eta\:  \id{\C}\Rightarrow Q \circ \eta_{\C}$ as the pasting
\begin{cd}
{\C} \arrow[rrdd,"", equal, bend right=40] \arrow[r,"\eta_C"] \arrow[rd,"\Gamma"'] \& {\C \x \C} \arrow[r,"\Gamma \x \Gamma"] \& {(\T \x \F) \x (\T \x \F)} \arrow[d,"\pi_{1,4}"]\\[-3ex]
{} \& {\T \x \F} \arrow[ru,"\eta_{\T \x \F}"{description}] \arrow[r,"", equal] \& {\T \x \F} \arrow[d,"\Gamma'"]\\[-1.5ex]
{} \& {} \arrow[ru,"\unit{}"', Rightarrow, shorten <= 7ex, shorten >= 5ex] \& {\C.}
\end{cd}
So, given $x\in\C$, $Q_{\eta}$ has component
$$(Q_{\eta})_X\: X \aar{\unit_X} \Gamma'(\Gamma(X))=\Gamma'(T^X, F^X).$$
It is then straightforward to check that $Q_\mu$ and $Q_\eta$ satisfy the required coherence conditions using the naturality of $\counit$ and the triangular identities of the adjunction $\Gamma\dashv\Gamma'$. Hence, $(\C,Q, Q_\mu, Q_\eta)$ is a pseudo-algebra for $M$.

  We now prove that every morphism $G\colon (\mathcal{C},\mathcal{T},\mathcal{F})\to (\mathcal{D},\mathcal{T}',\mathcal{F}')$ between symmetrical rectangular pretorsion theories gives a pseudo-morphism between pseudo-algebras. Let $\Gamma \colon \C \to \T \times \F$ and $\Delta \colon \D \to \T' \times \F'$ be the associated adjoint equivalences, and $(\C, Q\colon \C \times \C \to \C, Q_{\mu}, Q_{\eta})$ and $(\D, R\colon \D \times \D \to \D, R_{\mu}, R_{\eta})$ be the pseudo-algebras corresponding to $(\mathcal{C},\mathcal{T},\mathcal{F})$ and $(\mathcal{D},\mathcal{T}',\mathcal{F}')$, defined as above. We construct a natural transformation
\sq[i][6][6][\phi][3][2.8]{\C\x \C}{\D\x\D}{\C}{\D}{G\x G}{Q}{R}{G}
such that $(G,\phi)$ is a pseudo-morphism between pseudo-algebras. We define $\phi$ as the following pasting:
\begin{cd}
    {\C \x \C} \arrow[rrr,"G\x G"] \arrow[d,"\Gamma \x \Gamma"'] \&[-2ex] {} \&[-2ex] {} \&[-2ex] {\D \x \D} \arrow[d,"\Delta \x \Delta"] \arrow[llld, twoiso, "\lambda \x \lambda", shorten <= 12ex, shorten >= 14ex] \\[-0.5ex]
    {(\T\x \F)\x (\T \x \F)} \arrow[rrr,"(G\x G) \x (G\x G)"] \arrow[d,"\pi_{1,4}"'] \&[-2ex] {} \&[-2ex] {} \&[-2ex] {(\T'\x \F')\x (\T' \x \F')} \arrow[d,"\pi_{1,4}"]\\[-2.85ex]
    {\T\x \F} \arrow[ddd,"\Gamma'"'] \arrow[rd, equal]\arrow[rrr, "G\x G"] \&[-2ex] {} \&[-2ex] {} \&[-2ex] {\T' \x \F'} \arrow[ldd,"\unit^{-1}",twoiso, shift left=4ex, shorten <= 5ex, shorten >= 5ex]\arrow[ddd,"\Delta'"]\\[-3ex]
    {} \&[-2ex] {\T\x \F} \arrow[rd, Rightarrow, "\lambda^{-1}", shorten <= 1ex, shorten >= 1ex] \arrow[ldd, Rightarrow, "\counit^{-1}", shorten <= 5ex,shorten >= 5ex,shift left=-6ex] \arrow[rru, "G\x G"]  {} \&[-2ex] {}\\[-3ex]
    {} \&[-2ex] {} \&[-2ex] {\D} \arrow[ruu, "\Delta"] \arrow[rd,equal,]\&[-2ex] {}\\[-3ex]
    {\C} \arrow[rrr,"G"']  \arrow[ruu,"\Gamma"'] \arrow[rru,"G"] \&[-2ex] {} \&[-2ex] {} \&[-2ex] {\D,}
\end{cd}
where $\lambda\: \Delta \c G \to (G\x G) \c \Gamma$ is the natural transformation associated to $G$ defined in \conx\ref{conslambda}. Such $\phi$ is natural by construction. Moreover, $(G,\phi)$ satisfies the axioms of pseudo-morphism of pseudo-algebras thanks to the naturality of the natural transformations involved and the triangular identities of the adjunctions. The assignment $G \mapsto (G,\phi)$ is functorial. Indeed, if $G=\id{\C}$ then by uniqueness of $\lambda^1_X$ and $\lambda^2_X$ for every $X\in\C$, we obtain $\lambda=\id{}$ and hence $\phi=\id{}$. Moreover, it is straightforward to prove that the assignment preserves composition, using again the uniqueness of $\lambda_X$ for every $X\in \C$ and the triangular identities of the adjunctions involved. 

We now show that every $2$-cell $\alpha\:G \Rightarrow H\:(\C,\T,\F)\to (\D,\T',\F')$ in $\SCartPTors$ gives a 2-cell between pseudo-morphism of pseudo-algebras. It suffices to prove that the natural transformation $\alpha$ satisfies the required axiom for a 2-cell between pseudo-morphisms. This follows then from the key equality of \prox \ref{propalphacomp} applied to both $\alpha$ and $\alpha \times \alpha$, together with the interchange law of pasting diagrams. Moreover, this assignment on 2-cells is trivially functorial. We have thus defined a 2-functor $$\chi\:\SCartPTors \to \Alg+{M}$$
over $\Biqpointcat$.

We prove that every pseudo-algebra $(\C,Q,Q_{\mu}, Q_{\eta})$ gives a symmetrical rectangular pretorsion theory, of the form $(\C,\T,\F)$. We define $\T$ as the full subcategory on the objects of the essential image of $\C \x \0 \subseteq \C \x \C \aar{Q} \C$. Analogously, we define $\F$ using $\1 \x \C \subseteq \C \x \C \aar{Q} \C$. We then define a functor $Q'\: \C \to \T \x \F$, as the composite
$$\C \aar{\eta_{\C}} \C \x \C \aar{\Gamma} (\C \x \0) \x (\1\x \C) \aar{Q \x Q} \T \x \F.$$
Here, $\Gamma$ is induced by the inclusions $\C\to \C\x \0$ and $\C\to \1\x \C$; it will coincide with the canonical equivalence associated to the free algebra $\C\x\C$. So given $X\in \C$, we have $Q'(X)=(Q(X,0), Q(1,X))$. We show that $Q'$ is an equivalence of categories with pseudo-inverse given by the composite $\T \x \F \subseteq \C \x \C \aar{Q} \C$. We construct the natural isomorphism $\gamma\:Q\c \operatorname{inc}\c \hspace{0.1ex}Q'\aR{}\Id{\C}$, where $\operatorname{inc}\:\T\x\F\cont \C\x\C$, as the following pasting
\begin{cd}[4.5]
    {\C} \arrow[r,"\eta_{\C}"] \arrow[rrrddd,"", equal, bend right=30] \& {\C \x \C} \arrow[r,"\Gamma"] \arrow[rdd,"",equal, bend right=20] \& {(\C \x \0)\x (\1\x \C)}  \arrow[d,"",hook] \arrow[r,"Q \x Q"] \& {\T \x \F} \arrow[d,"",hook]\\[-2.5ex]
    {} \& {} \& {(\C \x \C)\x (\C \x \C)} \arrow[r,"Q \x Q"] \arrow[d,"\pi_{1,4}"]\& {\C \x \C} \arrow[dd,"Q"] \arrow[ld,"Q_{\mu}",Rightarrow, shorten <=4ex, shorten >=4ex]\\[-1.5ex]
    {} \& {} \& {\C \x \C} \arrow[ld,"Q^{-1}_{\eta}"{pos=0.1}, Rightarrow, shorten <= 0.1ex, shorten >=13ex] \arrow[rd,"Q"] \& {}\\[-2.5ex]
    {} \& {} \& {} \& {\C.}
\end{cd}
Notice that the existence of the natural isomorphism $\gamma$ implies that $Q$ is full.
We then construct the the natural isomorphism $\gamma'\:Q'\c Q\c \operatorname{inc}\aR{}\Id{\T\x\F}$ to have component $\gamma'_{(T,F)}$ on $(T,F)\in \T \x \F$ given by the composite
$$(Q(Q(T,F),0), Q(1,Q(T,F)))\iso (Q(T,0),Q(1,F))\iso (T,F),$$
where the first isomorphism is given by $Q_{\mu}$ while the second one is given on components as follows. Given $T\in \T$, there exists $X\in \C$ such that $T\iso Q(X,0)$ and we have the following chain of isomorphisms
\begin{equation}\label{eqisoT}
Q(T,0)\iso Q(Q(X,0),0) \aiso{Q_\mu} Q(X,0)\iso T.
\end{equation}
Analogously, given $F\in \F$ there exists $Y\in \C$ such that $F\iso Q(1,Y)$ and we consider
\begin{equation}\label{eqisoF}
Q(1,F)\iso Q(1,Q(1,Y)) \aiso{Q_\mu} Q(1,Y)\iso F.
\end{equation}
Moreover, these isomorphisms do not depend on the choice of the objects $X$ and $Y$, respectively. Indeed, suppose that there exist $X,X'\in \C$ together with isomorphisms $j\:T \to Q(X,0)$ and $j'\: T \to Q(X',0)$. Then, since $Q$ is full, there exists a morphism $X \aar{k} X'$ such that the composite $Q(X,0) \aar{j^{-1}} T \aar{j'} Q(X',0)$ is equal to $Q(k,\id{})$. And the naturality of $Q_\mu$ yields the following commutative diagram
\begin{cd}[4.5]
   {} \&[-3ex] {Q(Q(X,0),0)}  \arrow[dd,"{Q(Q(k,\id{}),\id{})}"{description}]\arrow[r,"{(Q_{\mu})_{X,0,0,0}}"{inner sep=1.5ex},aiso] \& {Q(X,0)} \arrow[dd,"{Q(k,\id{})}"{description}] \arrow[rd,"{j^{-1}}"{inner sep=1ex},aiso] \&[-3ex] {} \\[-2ex]
   {Q(T,0)} \arrow[rd,"{Q(j',\id{})}"'{inner sep=1ex},aiso]\arrow[ru,"{Q(j,\id{})^{-1}}"{inner sep=0.5ex},aiso] \& {} \& {} \& {T} \\[-2ex]
   {} \& {Q(Q(X',0),0)} \arrow[r,"{(Q_{\mu})_{X',0,0,0}}"'{inner sep=0.5ex},aiso] \& {Q(X',0)} \arrow[ru,"j"'{inner sep=0.5ex},aiso] \& {} 
\end{cd}
Analogously for the chosen isomorphism between $Q(1,F)$ and $F$. It is then straightforward to prove that $\gamma'$ is 2-natural using that $Q$ is full and the naturality of $Q_{\mu}$. So $Q'\: \C \to \T \x \F$ is an equivalence of categories with pseudo-inverse given by the composite $\T \x \F \subseteq \C \x \C \aar{Q} \C$. 

\noindent Notice that the equivalence $Q'$ preserves the initial object and so $Q'(0)=(0,Z)$ is the initial object in $\T \x \F$. This implies that $0$ is the initial object in $\T$ and $Z$ is the initial object in $\F$. Clearly, every morphism $0\aar{a} T$ in $\T$ is a monomorphism by assumption on $\C$. But also every morphism $Z \aar{a} F$ in $\F$ is a monomorphism. Indeed, $(\id{},a)$ is a monomorphism because $Q'$ is full and so there exists a monomorphism $0\aar{v} Q(0,F)$ such that the monomorphism $Q'(v)$ is the composite 
$$Q'(0) \iso Q'(Q(0,Z)) \iso (0,Z) \aar{(\id{},a)} (0,F) \iso Q'(Q(0,F)).$$
This implies that $a$ is a monomorphism. Analogously, since $Q'(1)=(Z,1)$, we have that $Z$ is terminal in $\T$ and $1$ is terminal in $\F$. Morevoer, every morphism $F\to 1$ in $\F$ is an epimorphism and every morphism $T \to Z$ in $\T$ is an epimorphism. We conclude that both $\T$ and $\F$ are bi-quasi-pointed categories. As a consequence, $\T \x \F$ has a symmetrical rectangular pretorsion theory given by the product of $(\T, \T,Z)$ and $(\F,Z,\F)$. Indeed, $Z$ is terminal in $\T$ and initial in $\F$. By the proof \corx\ref{corolltransfer}, the symmetrical rectangular pretorsion theory on $\C$ transferred via the equivalence $Q'$ is exactly $(\C,\T,\F)$. Again by the proof of \corx\ref{corolltransfer}, it is easy to see that, given $X\in \C$, the following is a short exact sequence:
$$Q(X,0)\aar{Q(\id{},!)} Q(X,X) \aiso{Q^{-1}_{\eta}} X \aiso{Q_{\eta}} Q(X,X) \aar{Q(!,\id{})} Q(1,X).$$
We choose this as the short exact sequence associated to $X$. So that the canonical equivalence $\C\to \T\x\F$ is exactly $Q'$.

We now prove that every pseudo-morphism $(G,\phi)\: (\C, Q\colon \C \times \C \to \C, Q_{\mu}, Q_{\eta}) \to (\D, R\colon \D \times \D \to \D, R_{\mu}, R_{\eta})$ between pseudo-algebras gives a morphism between symmetrical rectangular pretorsion theories. Let $(\mathcal{C},\mathcal{T},\mathcal{F})$ and $(\mathcal{D},\mathcal{T}',\mathcal{F}')$ be the symmetrical rectangular pretorsion theories associated to
$(\C, Q, Q_{\mu}, Q_{\eta})$ and to $(\D, R, R_{\mu}, R_{\eta})$ respectively, with corresponding equivalences $Q'\: \C \to \T \times \F$ and $R'\: \D \to \T' \times \F'$. We prove that $G\: \C \to \D$ is a morphism of symmetrical rectangular pretorsion theories. Since $(G,\phi)$ is a pseudo-morphism of pseudo-algebras, $G$ preserves $0$ and $1$. Given $T\in \T$, there exists $X\in \C$ such that $T\cong Q(X,0)$. So
\v[-3]
$$G(T)\iso G(Q(X,0)) \aiso{\phi_{x,0}^{-1}} R(G(X),0).$$ But $R(G(X),0)\in \T'$ and $\T'$ is replete, so $G(T)\in \T'$. Analogously, given $F\in \F$ there exists $X\in \C$ such that $F\iso Q(1,X)$ and hence $G(F)\in \F'$. It remains to prove that $G$ sends the chosen short exact sequences in $(\mathcal{C},\mathcal{T},\mathcal{F})$ to short exact sequences in $\D$. $G$ sends the chosen short exact sequence for $X\in \C$ to the top row of the following diagram
\[\begin{tikzcd}
	{G(Q(X,0))} &&[-4.5ex] {G(Q(X,X))} &[-3ex] {G(X)} &[-3ex] {G(Q(X,X))} &&[-4.5ex] {G(Q(1,X))} \\
	{R(G(X,0))} && {R(G(X),G(X))} & {G(X)} & {R(G(X),G(X))} && {R(1,G(X)).}
	\arrow["{G(Q(\operatorname{id},!))}", from=1-1, to=1-3]
	\arrow["{\phi^{-1}_{X,0}}"', from=1-1, to=2-1,aiso]
	\arrow["{G((Q_{\eta}^{-1})_X)}", from=1-3, to=1-4,iso]
	\arrow[from=1-3, to=2-3,aiso,"\phi^{-1}_{X,X}"']
	\arrow["{G((Q_{\eta})_X)}", from=1-4, to=1-5, iso]
	\arrow[from=1-4, to=2-4,equal]
	\arrow["{G(Q(!,\operatorname{id}))}", from=1-5, to=1-7]
	\arrow[from=1-5, to=2-5,aiso,"\phi^{-1}_{X,X}"']
	\arrow[from=1-7, to=2-7,aiso, "\phi^{-1}_{1,X}"']
	\arrow["{R(\operatorname{id},!)}"', from=2-1, to=2-3]
	\arrow["{(R^{-1}_{\eta})_{G(X)}}"',iso, from=2-3, to=2-4]
	\arrow["{(R_{\eta})_{G(X)}}"', iso,from=2-4, to=2-5]
	\arrow["{R(!,\operatorname{id})}"', from=2-5, to=2-7]
\end{tikzcd}\]
Since the diagram is commutative (by naturality of $\phi$ and by pseudo-morphism axioms satisfied by $(G,\phi)$) and the second row is the chosen short exact sequence for $G(X)$ in $(\D,\F',\T')$, the first row is a short exact sequence. We conclude that $G$ is a morphism between symmetrical rectangular pretorsion theories. And the assignment $(G,\phi) \mapsto G$ is clearly functorial. Moreover, every 2-cell between pseudo-morphisms of pseudo-algebras trivially gives a 2-cell between morphisms of symmetrical rectangular pretorsion theories, that is simply its underlying natural transformation. And of course this assignment on 2-cells is functorial. We have thus defined a 2-functor $$\xi\:\Alg+{M} \to \SCartPTors$$
over $\Biqpointcat$.

We now show that the functors $\chi$ and $\xi$ give a 2-equivalence between the 2-categories $\SCartPTors$ and $\Alg+{M}$. We first prove that there exists a 2-natural isomorphism $\xi \circ \chi \Rightarrow \Id{}$. A symmetrical rectangular pretorsion theory $(\C,\T, \F)$ (with associated equivalence $\Gamma\: \C \to \T \x \F$) is sent by $\xi \circ \chi$ to the symmetrical rectangular pretorsion theory $(\C, \T', \F')$ where $\T'$ is the full subcategory on the objects of the essential image of $\C\x \0 \subseteq \C \x \C \aar{Q} \C$ and analogously $\F'$ is the full subcategory on the objects of the essential image of $\1 \x \C \subseteq \C \x \C \aar{Q} \C$. The associated equivalence $\Gamma': \C \to \T' \x \F'$ is given by
$$\C \aar{\eta_{\C}} \C \x\C \aar{\Gamma} (\C \x 0)\x (1 \x \C) \aar{Q \x Q} \T' \x \F',$$
where $Q\: \C \x \C \to \C$ is given by the definition of $\chi$ as above. So $\Gamma'$ sends $X\in \C$ to the pair $(Q(X,0),Q(1,X))\in \T'\x \F'$. Notice now that, given $T\in \T$, we have 
$$Q(T,0)= \Gamma'(T,Z) \iso \Gamma'(\Gamma(T))\iso T,$$
(with $Z\in \C$ the unique null object up to isomorphism) and so, since $\T'$ is replete, we conclude that $T\in \T'$. On the other hand, given $T'\in \T'$, there exists $X\in \C$ such that 
$$T'\iso Q(X,0)= \Gamma'(T^X,Z)\iso \Gamma'(\Gamma (T^X))\iso T^X$$
and so, since $\T$ is replete, we conclude that $T'\in \T$. We thus showed that $\T'=\T$ and completely analogously we have $\F'=\F$. So the identity functor $\Id{\C}\: \C \to \C$ is a morphism between $(\C,\T, \F)$ and its image $(\C,\T', \F')$ under $\xi \circ \chi$. And then $\id{}$ clearly gives a 2-natural isomorphism $\xi \circ \chi \Rightarrow \Id{}$ over $\Biqpointcat$.

\noindent We now exhibit a 2-natural isomorphism $\chi \circ \xi \Rightarrow \Id{}$. A pseudo-algebra $(\C, Q\: \C \x \C \to \C, Q_{\mu}, Q_{\eta})$ is sent by $\chi \circ \xi$ to the pseudo-algebra $(\C, R\: \C \x \C \to \C, R_{\mu}, R_{\eta})$, where $R\: \C \x \C \to \C$ is given by
$$\C \x \C \aar{Q' \x Q'} (\T \x \F) \x (\T \x \F) \aar{\pi_{1,4}} \T \x \F \subseteq \C \x \C \aar{Q} \C,$$
with $\T$, $\F$ and $Q'$ given by construction of $\xi$ and $\chi$ as above. And the corresponding $R_{\mu}$ and $R_{\eta}$ are defined by construction of $\xi$. We  construct an invertible pseudo-morphism of pseudo-algebras $(\Id{}, \phi)\:(\C, Q\: \C \x \C \to \C, Q_{\mu}, Q_{\eta})\to (\C, R\: \C \x \C \to \C, R_{\mu}, R_{\eta})$. To do so, we need to define the natural transformation 
\sq[i][6][6][\phi][3][2.8]{\C\x \C}{\C\x\C}{\C}{\C}{\Id{}\x \Id{}}{Q}{R}{\Id{}}
We define $\phi$ as the following pasting
\begin{eqD*}
\scalebox{0.8}{
\begin{cd}*
    {\C \x \C} \arrow[r,"\eta_{\C} \x \eta_{\C}"] \arrow[rd,"",equal]\&[-3ex] {(\C \x \C) \x (\C \x \C)} \arrow[r,"\Gamma \x \Gamma"] \arrow[d,"\pi_{1,4}"] \&[-3ex]  {(\C\x 0)\x (\C \x 1) \x (\C\x 0)\x (\C \x 1) } \arrow[r,"(Q\x Q)\x (Q\x Q)"{inner sep=1.2ex}]  \arrow[d,"\pi_{1,4}"]\&[-3ex] {(\T \x \F) \x (\T \x \F)} \arrow[d,"\pi_{1,4}"] \\
    {} \& {\C \x \C} \arrow[r,"\Gamma"] \arrow[rdd,"", equal] \& {(\C \x 0)\x (1 \x \C) } \arrow[r,"Q \x Q"] \arrow[d,"", hook] \& {\T \x \F} \arrow[d,"", hook] \\[-3ex]
    {} \& {} \& {(\C \x \C) \x (\C \x \C)} \arrow[r,"Q \x Q"] \arrow[d,"\pi_{1,4}"] \& {\C \x \C} \arrow[ld,"Q_\mu", twoiso, shorten <= 10 ex, shorten >= 10 ex ] \arrow[d,"Q"] \\
    {} \& {} \& {\C \x \C} \arrow[r,"Q"'] \& {\C.} 
\end{cd}}
\end{eqD*}
It is then straightforward to prove that $(\Id{},\phi)$ satisfies the coherence axioms required for a pseudo-morphism of pseudo-algebras, thanks to the fact the the isomorphisms \refs{eqisoT} and \refs{eqisoF} do not depend on choices of $X$ and $Y$. Analogously, the pair $(\Id{}, \phi^{-1})$ gives a pseudo-morphism of pseudo-algebras, which is the inverse of $(\Id{}, \phi)$. One can then easily show that the pairs $(\Id{},\phi)$ constructed as above are the components of a 2-natural isomorphism $\chi \circ \xi \Rightarrow \Id{}$ over $\Biqpointcat$. We thus conclude that $\SCartPTors$ is 2-equivalent to $\Alg+{M}$ over $\Biqpointcat$.
\end{proof}


We show that \thex\ref{teormonad} restricts well to the pointed case. We denote by $\Pointcat$ the full sub-2-category of $\Biqpointcat$ of pointed categories.

\begin{theorem}\label{teormonadrestricted}
    The 2-category $\CartTors$ of rectangular torsion theories is 2-equivalent, over $\Pointcat$, to the 2-category $\Alg+{M^\ast}$ of pseudo-algebras (and pseudo-morphisms between them) of the (strict) 2-monad
    \begin{fun}
	M^\ast & \: & \Pointcat & \too & \Pointcat \\[1ex]
    && \tcv*{\C}{\D}{G}{H}{\alpha} & \mto & \tcv*{\C\x\C}{\D\x\D}{G\x G}{H\x H}{\alpha\x \alpha}
\end{fun}
\end{theorem}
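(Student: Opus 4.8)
The plan is to obtain \thex\ref{teormonadrestricted} by restricting the $2$-equivalence of \thex\ref{teormonad} to suitable full sub-$2$-categories on both sides. Write $\chi\:\SCartPTors\to\Alg+{M}$ and $\xi\:\Alg+{M}\to\SCartPTors$ for the two $2$-functors constructed in the proof of \thex\ref{teormonad}.

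First I would check that $M$ restricts to $M^\ast$. The $2$-category $\Pointcat$ is a full sub-$2$-category of $\Biqpointcat$ which is closed under $\C\mapsto\C\x\C$ (a product of pointed categories is pointed) and which contains the components $\eta_\C$ (the diagonal) and $\mu_\C=\pi_{1,4}$ of the unit and the multiplication of $M$ (both preserve the zero object). Hence the strict $2$-monad $M$ restricts to the strict $2$-monad $M^\ast$ on $\Pointcat$, and it follows formally that a pseudo-$M^\ast$-algebra is exactly a pseudo-$M$-algebra whose underlying category is pointed, with the same pseudo-morphisms and $2$-cells between such; that is, $\Alg+{M^\ast}$ is the full sub-$2$-category of $\Alg+{M}$ on the pseudo-algebras with pointed underlying category. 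On the other side, by \prox\ref{propcatcarttors}, $\CartTors$ is a full sub-$2$-category of $\SCartPTors$.

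The one non-formal point is that $\chi$ and $\xi$ match up these two full sub-$2$-categories, which amounts to the claim that a symmetrical rectangular pretorsion theory $(\C,\T,\F)$ is a rectangular torsion theory \emph{if and only if} $\C$ is pointed. The forward implication is immediate from the definition. For the converse, suppose $\C$ is pointed. Since $\C$ is in particular bi-quasi-pointed, the argument in the proof of \prox\ref{propcharactsymm} shows $0\in\T$ and $1\in\F$; as $\C$ is pointed we have $0\iso 1$, so the zero object $0$ of $\C$ lies in $\T\cap\F$. By \lemx\ref{LemE} every object of $\T\cap\F$ is then isomorphic to $0$, hence is a zero object of $\C$, and conversely every zero object lies in $\T\cap\F$ by repleteness; thus $\T\cap\F=\0$ and $(\C,\T,\F)$ is a rectangular torsion theory. (Alternatively, this is visible from the formulas $Q'(0)=(0,Z)$ and $Q'(1)=(Z,1)$ established in the proof of \thex\ref{teormonad}: since $0\iso 1$ in a pointed category, $(0,Z)\iso(Z,1)$, which forces $Z\iso 0$.)

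Finally, since $\chi$ and $\xi$ are over $\Biqpointcat$ they preserve underlying categories, so by the previous paragraph $\chi$ carries $\CartTors$ into $\Alg+{M^\ast}$, $\xi$ carries $\Alg+{M^\ast}$ into $\CartTors$, and the $2$-natural isomorphisms $\xi\c\chi\Rightarrow\Id{}$ and $\chi\c\xi\Rightarrow\Id{}$ of \thex\ref{teormonad} restrict to these sub-$2$-categories. This yields a $2$-equivalence $\CartTors\simeq\Alg+{M^\ast}$; and since the forgetful $2$-functors from $\CartTors$ and from $\Alg+{M^\ast}$ to $\Biqpointcat$ both factor through $\Pointcat$, this $2$-equivalence is over $\Pointcat$. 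I expect that the only real care needed is in the previous paragraph (promoting a bi-quasi-pointed base to a genuine torsion theory once it is pointed); everything else is a routine transfer of \thex\ref{teormonad} along full sub-$2$-category inclusions.
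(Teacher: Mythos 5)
Your proposal is correct and follows essentially the same route as the paper: restrict the $2$-equivalence of \thex\ref{teormonad} along the full inclusions $\Pointcat\subseteq\Biqpointcat$, $\CartTors\subseteq\SCartPTors$ (via \prox\ref{propcatcarttors}) and $\Alg+{M^\ast}\subseteq\Alg+{M}$. The one substantive point you isolate --- that a symmetrical rectangular pretorsion theory on a pointed base is automatically a rectangular torsion theory, i.e.\ that $0$, $1$ and $Z$ all become the zero object --- is exactly what the paper handles by invoking \prox\ref{proppointedcase}, and your direct argument via \lemx\ref{LemE} and the proof of \prox\ref{propcharactsymm} is a valid way to establish it.
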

\begin{proof}
    It is easy to see that the $2$-monad $M\:\Biqpointcat\to \Biqpointcat$ of \thex\ref{teormonad} restricts to the $2$-monad $M^\ast$ of the statement. It is also straightforward to see that the proof of \thex\ref{teormonad} restricts to a proof of this theorem. Here, $0$, $1$ and $Z$ all coincide with the zero object. Moreover, any morphism between rectangular torsion theories preserves the zero object, by \prox\ref{propcatcarttors}. Rather than using the definition of symmetrical rectangular pretorsion theory, we apply \prox\ref{proppointedcase}.
\end{proof}

\begin{corollary}
    The forgetful $2$-functor $\SCartPTors\to \Biqpointcat$ has a left bi-adjoint, (essentially) given by $M$.
    The forgetful $2$-functor $\CartTors\to \Pointcat$ has a left bi-adjoint, (essentially) given by $M^\ast$.
\end{corollary}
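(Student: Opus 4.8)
The plan is to deduce both statements directly from Theorems~\ref{teormonad} and~\ref{teormonadrestricted}, together with the standard fact of two-dimensional monad theory (see \cite{L02}) that for any strict $2$-monad $T$ on a $2$-category $\mathcal K$ the forgetful $2$-functor $U_T\colon\Alg+{T}\to\mathcal K$, from pseudo-$T$-algebras and pseudo-morphisms to $\mathcal K$, admits a left bi-adjoint $F_T$ sending an object $A$ to the free pseudo-algebra carried by $TA$ with structure map $\mu_A$. The universal property here is only bi-categorical --- the hom-categories are equivalent rather than isomorphic --- precisely because pseudo-morphisms are allowed. Applying this to the $2$-monads $M$ and $M^\ast$ gives left bi-adjoints $F_M\colon\Biqpointcat\to\Alg+{M}$ and $F_{M^\ast}\colon\Pointcat\to\Alg+{M^\ast}$ to the corresponding forgetful $2$-functors, with the underlying object of $F_M(\C)$ equal to $M\C=\C\x\C$, and similarly for $F_{M^\ast}$.

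Next I would transport these bi-adjunctions along the $2$-equivalences of Theorems~\ref{teormonad} and~\ref{teormonadrestricted}. Write $\chi\colon\SCartPTors\to\Alg+{M}$ and $\xi\colon\Alg+{M}\to\SCartPTors$ for the mutually pseudo-inverse $2$-functors built in the proof of Theorem~\ref{teormonad}; these are $2$-functors over $\Biqpointcat$, so $U_M\circ\chi$ is isomorphic to the forgetful $2$-functor $P\colon\SCartPTors\to\Biqpointcat$. Then, for a bi-quasi-pointed category $\C$ and a symmetrical rectangular pretorsion theory $S$, there is a chain of equivalences of categories
$$\SCartPTors\bigl(\xi F_M(\C),\,S\bigr)\;\simeq\;\Alg+{M}\bigl(F_M(\C),\,\chi(S)\bigr)\;\simeq\;\Biqpointcat\bigl(\C,\,U_M\chi(S)\bigr)\;\simeq\;\Biqpointcat\bigl(\C,\,P(S)\bigr),$$
where the first equivalence comes from $\xi,\chi$ being mutually pseudo-inverse, the second is the free/forgetful bi-adjunction for $M$, and the third uses $U_M\circ\chi\simeq P$. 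Being a composite of pseudo-natural equivalences, this chain is pseudo-natural in $\C$ and $S$, so $\xi\circ F_M$ is a left bi-adjoint to $P$. The pointed case is identical, with $(M,\chi,\xi,\Biqpointcat)$ replaced throughout by the corresponding data of Theorem~\ref{teormonadrestricted}.

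It then remains to justify the phrase ``essentially given by $M$'' (resp.\ $M^\ast$) by unwinding $\xi\circ F_M$ on objects. The free $M$-algebra on $\C$ has underlying bi-quasi-pointed category $\C\x\C$ and structure map $\mu_\C=\pi_{1,4}$; feeding this into the construction of $\xi$ from the proof of Theorem~\ref{teormonad} produces the pretorsion theory on $\C\x\C$ whose torsion subcategory is the essential image of $(\C\x\C)\x\0\hookrightarrow(\C\x\C)\x(\C\x\C)\xrightarrow{\pi_{1,4}}\C\x\C$, namely $\C\x\0$, and, dually, whose torsion-free subcategory is $\1\x\C$. This is precisely the symmetrical rectangular pretorsion theory $(\C\x\C,\mathbb{C}\times\mathbf{0},\mathbf{1}\times\mathbb{C})$ of Theorem~\ref{TheA} with $\D=\C$; likewise, the left bi-adjoint of $\CartTors\to\Pointcat$ sends a pointed category $\C$ to the rectangular torsion theory $(\C\x\C,\mathbb{C}\times\mathbf{0},\mathbf{0}\times\mathbb{C})$ of \prox\ref{proppointedcase}. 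In both cases the underlying category of the value is $M\C=\C\x\C$ (resp.\ $M^\ast\C$), which is the precise meaning of ``essentially''. The main --- and only genuinely non-formal --- obstacle is the bookkeeping in transporting a \emph{bi}-adjunction, rather than a strict $2$-adjunction, along a $2$-equivalence over the base: one must verify that the displayed string of hom-equivalences is pseudo-natural and not merely a pointwise equivalence. Everything else is immediate from the quoted theorems.
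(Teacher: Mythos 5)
Your argument is correct and follows essentially the same route as the paper: invoke the standard bi-adjunction between the base and the $2$-category of pseudo-algebras of a $2$-monad, then transport it along the $2$-equivalences of Theorems~\ref{teormonad} and~\ref{teormonadrestricted}, which live over the base. Your extra unwinding of $\xi\circ F_M$ on objects (identifying the value on $\C$ with the pretorsion theory $(\C\x\C,\mathbb{C}\times\mathbf{0},\mathbf{1}\times\mathbb{C})$ of Theorem~\ref{TheA}) is a welcome elaboration of what the paper leaves implicit in the phrase ``essentially given by $M$''.
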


\begin{proof}
    By the theory of $2$-monads, the forgetful 2-functor $\Alg+{M}\to \Biqpointcat$ has a left bi-adjoint, given by sending $\C\in \Biqpointcat$ to the free $M$-algebra on $\C$, which coincides with calculating $M(\C)$ and endowing it with the algebra structure map $Q=\mu_\C\:M^2(\C)\to M(C)$. By \thex\ref{teormonad}, which shows a $2$-equivalence over $\Biqpointcat$, we conclude that the forgetful $2$-functor $\SCartPTors\to \Biqpointcat$ has the same left bi-adjoint, up to postcomposing with the 2-equivalence $\xi\:\Alg+{M}\approx \SCartPTors$.

    Analogously for the forgetful $2$-functor $\CartTors\to \Pointcat$, thanks to \thex\ref{teormonadrestricted}.
\end{proof}

We can also apply our monadicity result to construct (2-dimensional) limits in the 2-categories of symmetrical rectangular pretorsion theories and that of rectangular torsion theories.

\begin{corollary}
    The forgetful $2$-functor $\SCartPTors\to \Biqpointcat$ creates all bilimits.
    The forgetful $2$-functor $\CartTors\to \Pointcat$ creates all bilimits.
\end{corollary}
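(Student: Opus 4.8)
The plan is to transfer both statements across the $2$-equivalences over the base established in \thex\ref{teormonad} and \thex\ref{teormonadrestricted}, and then appeal to a standard fact about pseudo-algebras. Since \thex\ref{teormonad} produces a $2$-equivalence $\chi\:\SCartPTors\approx\Alg+{M}$ that commutes with the two forgetful $2$-functors to $\Biqpointcat$, and likewise \thex\ref{teormonadrestricted} gives $\CartTors\approx\Alg+{M^\ast}$ over $\Pointcat$, and since ``creates all bilimits'' is invariant under replacing a $2$-functor by a $2$-naturally isomorphic one and under precomposition with a $2$-equivalence (a $2$-equivalence preserves and reflects all bilimits), it suffices to show that the forgetful $2$-functor $\Alg+{M}\to\Biqpointcat$, respectively $\Alg+{M^\ast}\to\Pointcat$, creates all bilimits.

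For this I would invoke the general principle of two-dimensional monad theory: for any strict $2$-monad $M$ on a $2$-category $\mathcal{K}$, the forgetful $2$-functor from the $2$-category of pseudo-algebras and pseudo-morphisms to $\mathcal{K}$ creates every bilimit that exists in $\mathcal{K}$ (see \cite{L02}). The underlying argument is a transport of structure along the universal bicone: if a diagram of pseudo-algebras has underlying diagram $(D_j)$ with bilimit $L$ and projections $\pi_j$, one obtains an algebra structure map $a\:ML\to L$ by factoring the cone $\bigl(ML\aar{M\pi_j}MD_j\aar{a_j}D_j\bigr)_j$ through $L$ by means of the bicategorical universal property $\mathcal{K}(-,L)\approx\operatorname{bilim}_j\mathcal{K}(-,D_j)$, which at the same time supplies the coherence $2$-cells $Q_\mu$ and $Q_\eta$; the pseudo-algebra axioms then hold because each can be checked after postcomposition with the $\pi_j$. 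The same universal property lifts pseudo-morphisms and the $2$-cells between them, so the bilimit of the diagram of pseudo-algebras exists, is unique up to equivalence, and is preserved and reflected by the forgetful $2$-functor. Finally, $\Biqpointcat$ and $\Pointcat$ do have all bilimits: they are full sub-$2$-categories of $\mathbf{Cat}$ closed under bilimits, since a bilimit in $\mathbf{Cat}$ of bi-quasi-pointed (resp.\ pointed) categories and structure-preserving functors carries an initial object and a terminal object given by the constant bicones at $0$ and at $1$, with $0\to1$ still mono and epi (resp.\ with $0=1$ a zero object), and these are preserved by the projections.

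The main obstacle is not conceptual but lies in the two-dimensional bookkeeping of the transport argument: one must verify that $\bigl(a_j\c M\pi_j\bigr)_j$ genuinely forms a pseudo-cone over $(D_j)$ --- which uses the structure $2$-cells of the legs of the original pseudo-cone together with the associativity and unit coherences of the $D_j$ --- and that the induced $Q_\mu$, $Q_\eta$ satisfy the pseudo-algebra coherence axioms on $L$; each such check reduces, by the universal property, to an equality of pasting diagrams ``one level down'', but these must be assembled with care, exactly in the spirit of the constructions of $\chi$ and $\xi$ in the proof of \thex\ref{teormonad}. A smaller point to pin down is the claimed closure of $\Biqpointcat$ (and $\Pointcat$) in $\mathbf{Cat}$ under bilimits, for which it is enough to check it on the generating weighted limits (products, inserters, equifiers, comma objects): in each case the initial and terminal objects of the limit category, and the fact that the structural morphism $0\to1$ remains mono and epi, are produced directly from the corresponding data in the pieces using that the defining functors preserve $0$ and $1$. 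Once these are settled, the creation statement for $\SCartPTors\to\Biqpointcat$ follows, and the pointed case $\CartTors\to\Pointcat$ follows identically, with $0$, $1$ and $Z$ all collapsing to the zero object as in \thex\ref{teormonadrestricted}.
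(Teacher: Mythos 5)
Your proposal is correct and follows essentially the same route as the paper: reduce via the $2$-equivalences over the base from Theorems~\ref{teormonad} and~\ref{teormonadrestricted} to the forgetful $2$-functors $\Alg+{M}\to\Biqpointcat$ and $\Alg+{M^\ast}\to\Pointcat$, and then invoke the standard fact that (pseudo-)monadic $2$-functors create bilimits (the paper cites \cite[Theorem 6.3.1.6]{O21} for this, where you cite \cite{L02} and sketch the transport argument). The additional discussion of bilimits existing in $\Biqpointcat$ and $\Pointcat$ is not needed for the creation statement itself and is handled separately in the paper.
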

\begin{proof}
    It is known that (pseudo-)monadic 2-functors create bilimits, see \cite[Theorem 6.3.1.6]{O21}.
\end{proof}

\begin{remark}\label{remlimitsinpoincat}
    It is folklore that one can always formally adjoin a zero object to any category. More precisely, $\Pointcat$ is 2-equivalent to the 2-category of strict algebras and pseudo-morphisms for the $2$-monad $K$ on $\mathbf{Cat}$ that sends a category $\C$ to the category constructed by formally adjoining a zero object to $\C$. As a consequence,
    $\Pointcat$ has all PIE limits, and in particular all bilimits, created by the forgetful $2$-functor into $\mathbf{Cat}$.
\end{remark}

\begin{corollary}\label{corlimitscarttors}
    $\CartTors$ has all bilimits, created by the forgetful $2$-functor $\CartTors\to \Pointcat$ and thus calculated in $\mathbf{Cat}$ (thanks to \remx\ref{remlimitsinpoincat}).
\end{corollary}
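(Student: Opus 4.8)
The plan is simply to chain the two facts we already have at our disposal. By the preceding corollary, the forgetful $2$-functor $U\colon\CartTors\to\Pointcat$ creates all bilimits (it is pseudo-monadic by \thex\ref{teormonadrestricted}, and pseudo-monadic $2$-functors create bilimits by \cite[Theorem 6.3.1.6]{O21}); and by \remx\ref{remlimitsinpoincat} the $2$-category $\Pointcat$ has all bilimits, created by the forgetful $2$-functor $V\colon\Pointcat\to\mathbf{Cat}$.

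First I would spell out what creation of bilimits means here, reading it up to equivalence rather than strictly: given a $2$-diagram $D$ in $\CartTors$, the composite $UD$ has a bilimit in $\Pointcat$ since $\Pointcat$ is bicategorically complete, and creation by $U$ then yields a bilimit of $D$ in $\CartTors$ whose image under $U$ is (equivalent to) that bilimit. Next I would apply the same reasoning to $V$ and the diagram $UD$: the bilimit of $UD$ is created from the bilimit of $VUD$ in $\mathbf{Cat}$, which exists because $\mathbf{Cat}$ is bicategorically complete. Composing these two steps shows that the bilimit of $D$ exists in $\CartTors$ and that its underlying category is the bilimit in $\mathbf{Cat}$ of the underlying $2$-diagram; in other words, $VU$ creates the bilimit of $D$ and in particular so does $U$. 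Since $D$ is arbitrary, $\CartTors$ has all bilimits, created by $U$ and hence computed in $\mathbf{Cat}$.

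The only step requiring any care — and thus the main, though entirely routine, obstacle — is the composability of creation of bilimits in the bicategorical sense: if $U$ and $V$ each create bilimits and the ultimate target is bicategorically complete, then so is the source and $VU$ creates its bilimits. This is verified by unwinding the definition of creation and using that (pastings of) equivalences transport bilimit cones to bilimit cones; it is effectively subsumed in \cite[Theorem 6.3.1.6]{O21} together with \remx\ref{remlimitsinpoincat}, so in the write-up I would simply cite these and state the conclusion.
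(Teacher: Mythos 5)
Your proposal is correct and is essentially the argument the paper intends (it leaves the proof implicit): combine the preceding corollary, that $\CartTors\to\Pointcat$ creates bilimits by pseudo-monadicity, with \remx\ref{remlimitsinpoincat}, that $\Pointcat$ has all bilimits created by the forgetful $2$-functor to the bicategorically complete $\mathbf{Cat}$. Your extra care about the composability of creation of bilimits is a reasonable point to flag, but it does not change the route.
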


\begin{remark}
    The recipe for constructing weak products (also called biproducts) of rectangular torsion theories given by \corx\ref{corlimitscarttors} coincides with the recipe for products described in \thex\ref{TheB}. Exactly as the recipe for weak products also works in the setting of general pretorsion theories and gives products, one could try to take the recipes for bilimits given by \corx\ref{corlimitscarttors} and see if they also work in the general setting.
\end{remark}

\section{A case study: torsion theory classes of epimorphisms}

Let $\mathbb{X}$ be a pointed category. Consider any full subcategory $\mathcal{E}$ of the category of morphisms in $\mathbb{X}$ which contains all isomorphisms and all morphisms to a zero object. Assume $\mathcal{E}$ is such that each of its objects is an epimorphism in $\mathbb{X}$. Consider two full replete subcategories of $\mathcal{E}$: 
\begin{itemize}
\item $\mathcal{T}$, given by isomorphisms 
\item and $\mathcal{F}$, given by morphisms to a zero object. 
\end{itemize}
Note that $\mathcal{E}$ is a pointed category where zero objects are given by the intersection $\mathcal{T}\cap \mathcal{F}$. We write $0$ for a zero object in $\mathbb{X}$.

\begin{lemma}
For an object $e\colon X\to Y$ in $\mathcal{E}$, a diagram
$$\xymatrix{A\ar[r]^-{a}\ar[d]_-{} & X\ar[r]^-{be}\ar[d]^-{e} & B\ar[d]^-{\id{B}} \\ 0\ar[r] & Y\ar[r]_{b} & B }$$
is an exact sequence in $\mathcal{E}$ if and only if $a$ is a kernel of $e$ while $e$ is a cokernel of $a$, and, $b$ is an isomorphism.
\end{lemma}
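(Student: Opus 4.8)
The plan is to reduce everything to explicit bookkeeping with commuting squares in $\mathbb{X}$, exploiting that $\mathcal{E}$ is pointed, so that a morphism of $\mathcal{E}$ is \emph{null} precisely when both of its two components are zero morphisms of $\mathbb{X}$ (a morphism factors through an object of $\mathcal{T}\cap\mathcal{F}$, i.e.\ through some $\id{0}\colon 0\to 0$, iff it does so componentwise, iff both components are zero). Write $\iota\colon(A\to 0)\to e$ for the left-hand morphism of the diagram (with components $a$ and the zero map $0\to Y$) and $\pi=(be,b)\colon e\to(\id{B}\colon B\to B)$ for the right-hand one; note that $\pi\circ\iota$ is null since $ea=0$. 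I would first record two warm-up facts, both proved by testing against the identity objects $\id{V}$ (which always lie in $\mathcal{E}$): \textbf{(1)} a morphism $(f,g)\colon(A\to 0)\to e$ is a monomorphism in $\mathcal{E}$ iff $f$ is a monomorphism in $\mathbb{X}$; \textbf{(2)} $\pi$ is an epimorphism in $\mathcal{E}$ whenever $b\circ e$ is an epimorphism in $\mathbb{X}$.

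For the ``if'' direction I would assume $a$ a kernel of $e$, $e$ a cokernel of $a$ in $\mathbb{X}$, and $b$ an isomorphism, and check the two universal properties in $\mathcal{E}$. For ``$\iota$ is a kernel of $\pi$'': $\iota$ is mono by (1); and given $(f,g)\colon v\to e$ in $\mathcal{E}$ with $\pi\circ(f,g)$ null, its two components give $bef=0$ and $bg=0$, whence $ef=0$ and $g=0$ since $b$ is mono, so $f$ factors uniquely through $a$ in $\mathbb{X}$, and this assembles into the unique factorisation of $(f,g)$ through $\iota$. For ``$\pi$ is a cokernel of $\iota$'': $\pi$ is epi by (2); and given $(p,q)\colon e\to w$ with $(p,q)\circ\iota$ null (so $pa=0$), the cokernel property in $\mathbb{X}$ yields a unique $\bar p$ with $\bar p e=p$, and then $(\bar p b^{-1},\,w\bar p b^{-1})\colon\id{B}\to w$ factors $(p,q)$ through $\pi$, the bottom component being checked using that $e$ is epi.

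For the ``only if'' direction I would assume the diagram is a short exact sequence. Then $a$ is mono by (1) applied to $\iota$, and the kernel universal property of $a$ in $\mathbb{X}$ follows by feeding an arbitrary $\phi\colon W\to X$ with $e\phi=0$ into the kernel property of $\iota$ along the morphism $(\phi,0)\colon\id{W}\to e$ of $\mathcal{E}$. To see that $b$ is an isomorphism I would apply the cokernel property of $\pi$ to the morphism $(e,\id{Y})\colon e\to\id{Y}$ (null on $\iota$ because $ea=0$), obtaining $r\colon B\to Y$ with $rb=\id{Y}$; then the endomorphism $(br,br)$ of $\id{B}$ in $\mathcal{E}$ satisfies $(br,br)\circ\pi=(be,b)=\pi$, and since $\pi$ is an epimorphism of $\mathcal{E}$ this forces $br=\id{B}$. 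Finally $e$ is epi (being an object of $\mathcal{E}$), and its cokernel universal property in $\mathbb{X}$ comes from feeding an arbitrary $\psi\colon X\to W$ with $\psi a=0$ into the cokernel property of $\pi$ along the morphism $(\psi,\,!)\colon e\to(W\to 0)$ — a legitimate object of $\mathcal{E}$ because $\mathcal{E}$ contains all morphisms into a zero object — which produces $\psi=(p'b)\circ e$, uniqueness of the factorisation being automatic from $e$ epi.

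The main obstacle will be the ``only if'' direction: once null morphisms are described concretely, both factorisation properties are routine, so the real work is the choice of the probing objects of $\mathcal{E}$ — the identities $\id{W},\id{Y},\id{B}$ and the zero maps $W\to 0$, which are exactly the objects whose presence in $\mathcal{E}$ is guaranteed by the hypotheses — together with the realisation that invertibility of $b$ splits into a split-mono half (from the cokernel factorisation of $(e,\id{Y})$) and a split-epi half (from $\pi$ being an epimorphism).
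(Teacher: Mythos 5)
Your proof is correct. The paper states this lemma without proof (it is immediately followed by ``From the lemma above we easily get the following''), so there is no argument of the authors' to compare against; your verification --- describing null morphisms of $\mathcal{E}$ componentwise as pairs of zero morphisms, and probing the universal properties with the objects $\id{W}$, $\id{Y}$, $\id{B}$ and $W\to 0$ whose membership in $\mathcal{E}$ is exactly what the hypotheses guarantee --- is the routine computation the authors evidently have in mind, carried out completely and without gaps.
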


From the lemma above we easily get the following.

\begin{theorem}\label{TheD}
$(\mathcal{E},\mathcal{T},\mathcal{F})$ is a torsion
theory if and only if every object of $\mathcal{E}$ is a cokernel in $\mathbb{X}$ and has a kernel in $\mathbb{X}$. When $\mathbb{X}$ has binary products, this torsion theory is rectangular if and only if every object of $\mathcal{E}$ is a product projection in $\mathbb{X}$. 
\end{theorem}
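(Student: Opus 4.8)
The plan is to read off both assertions from the preceding Lemma, which classifies the short exact sequences of $\mathcal{E}$ with a prescribed object $e\colon X\to Y$ in the middle.

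For the first assertion I would first record the structural trivialities: $\mathcal{T}$ and $\mathcal{F}$ are full replete subcategories of $\mathcal{E}$, their intersection is exactly the class of zero objects of $\mathcal{E}$, so the null morphisms of $\mathcal{E}$ are its zero morphisms, and axiom (T1) holds automatically (a morphism of $\mathcal{E}$ between the two distinguished classes has one of its two components forced to be a zero morphism, hence factors through a zero object). What is left is (T2). Feeding $b=\operatorname{id}_Y$ into the Lemma, an object $e$ of $\mathcal{E}$ sits in a short exact sequence of $\mathcal{E}$ of the shape displayed there precisely when $e$ has a kernel $a$ in $\mathbb{X}$ and $e$ is a cokernel of $a$; conversely, any short exact sequence of $\mathcal{E}$ with $e$ in the middle is isomorphic to one of that shape (replace the isomorphism end by an identity, and the zero on the other end by the chosen zero object), so the Lemma forces exactly that. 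I would close by recording the elementary observation that, once $\ker e$ exists, ``$e$ is a cokernel in $\mathbb{X}$'' is the same as ``$e$ is the cokernel of $\ker e$'' (if $e=\operatorname{coker} f$ then $f$ factors through $\ker e$, whence $e=\operatorname{coker}(\ker e)$). This yields the first equivalence.

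For the second assertion, assume $(\mathcal{E},\mathcal{T},\mathcal{F})$ is a torsion theory and $\mathbb{X}$ has binary products. The first step is to make the canonical functor $\Gamma\colon\mathcal{E}\to\mathcal{T}\times\mathcal{F}$ explicit: since $\mathcal{E}$ contains all isomorphisms and all morphisms to zero objects of $\mathbb{X}$, taking domains gives equivalences $\mathcal{T}\simeq\mathbb{X}$ and $\mathcal{F}\simeq\mathbb{X}$, under which $\Gamma$ becomes the functor $\mathcal{E}\to\mathbb{X}\times\mathbb{X}$ sending $e$ to $(\ker e,\operatorname{cod} e)$ (and a commuting square $(f_0,f_1)$ to the induced pair on kernels and codomains). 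Rectangularity is by definition that this functor is an equivalence. Against it one has the obvious candidate $S\colon\mathbb{X}\times\mathbb{X}\to\operatorname{Arr}(\mathbb{X})$, $(A,Y)\mapsto(\pi_Y\colon A\times Y\to Y)$, which satisfies $\Gamma\circ S=\operatorname{Id}$ once $S$ corestricts to $\mathcal{E}$; such a $\pi_Y$ has kernel $A$ and, by the first part, is automatically a cokernel of its kernel (a normal product projection).

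For ``$\Leftarrow$'': if every object $e$ of $\mathcal{E}$ is a product projection, then by the first part it is the cokernel of its kernel, hence isomorphic in $\operatorname{Arr}(\mathbb{X})$ to $\pi_{\operatorname{cod} e}\colon\ker e\times\operatorname{cod} e\to\operatorname{cod} e=S(\Gamma e)$; in particular $S$ corestricts to $\mathcal{E}$, and assembling these isomorphisms into a natural isomorphism $S\circ\Gamma\cong\operatorname{Id}_{\mathcal{E}}$ exhibits $\Gamma$ as an equivalence. For ``$\Rightarrow$'': if $\Gamma$ is an equivalence, then for $e\in\mathcal{E}$ the object $T^e\times F^e$ computed in $\mathcal{E}$ exists (as $\mathcal{E}\simeq\mathcal{T}\times\mathcal{F}$ inherits products), is sent by $\Gamma$ to $\Gamma(e)$, hence is isomorphic to $e$; identifying this product inside $\mathbb{X}$ exhibits $e$ as a product projection. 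The step I expect to be the real obstacle is the fully-faithfulness of $\Gamma$ in the ``$\Leftarrow$'' direction — equivalently, that the commuting-square hom-sets $\operatorname{Hom}_{\mathcal{E}}(e,e')$ are pinned down by the action on codomains and on kernels — and, dually, identifying the product $T^e\times F^e$ in $\mathbb{X}$ with an honest product projection in the ``$\Rightarrow$'' direction. This is the one place where the hypothesis ``every object of $\mathcal{E}$ is a product projection'', together with the standing assumption that objects of $\mathcal{E}$ are epimorphisms, has to be used genuinely rather than as bookkeeping on top of the Lemma.
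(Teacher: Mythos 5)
Your overall route --- reading both halves of the statement off the preceding Lemma, and identifying $\mathcal{T}\times\mathcal{F}$ with $\mathbb{X}\times\mathbb{X}$ via codomains and kernels --- is the intended one (the paper offers no proof beyond ``from the lemma above we easily get''). But two steps you treat as routine are not, and the one you defer is where the argument actually breaks.

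First, your justification of (T1) is incorrect: a commutative square with one zero component does not factor through a zero object $(0\to 0)$ of $\mathcal{E}$; a morphism of $\mathcal{E}$ is null precisely when \emph{both} components are zero in $\mathbb{X}$. With the labels as written ($\mathcal{T}=$ isomorphisms as torsion class, $\mathcal{F}=$ morphisms to $0$ as torsion-free class), (T1) in fact fails: $(\id{A},0)\colon \id{A}\to (A\to 0)$ is a morphism from a $\mathcal{T}$-object to an $\mathcal{F}$-object whose top component is $\id{A}\neq 0$, hence it is not null. The Lemma's exact sequence runs from a morphism-to-zero to an isomorphism, so for it to witness (T2) the torsion class must be the morphisms to zero and the torsion-free class the isomorphisms (a labelling slip in the paper that your argument should have surfaced); in that orientation both components of a morphism $(B\to 0)\to t$ do vanish ($t$ is a monomorphism, and the codomain component has zero domain), and (T1) holds. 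With that correction, your treatment of (T2) via the Lemma, including the reduction of an arbitrary short exact sequence to the displayed shape and the observation that ``cokernel with a kernel'' equals ``cokernel of its kernel'', is fine.

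Second, the rectangularity half. Your ``$\Leftarrow$'' step ``in particular $S$ corestricts to $\mathcal{E}$'' is a non sequitur: knowing that each $e\in\mathcal{E}$ is isomorphic to $S(\Gamma e)$ only controls $S$ on the essential image of $\Gamma$, whereas essential surjectivity of $\Gamma$ requires $\mathcal{E}$ to contain, for \emph{every} pair $(A,Y)$, an arrow with kernel $A$ and codomain $Y$; this does not follow from ``every object of $\mathcal{E}$ is a product projection'' (take $\mathcal{E}$ to consist of exactly the isomorphisms and the morphisms to zero). More seriously, the step you explicitly postpone --- fully faithfulness of $\Gamma$ --- is not a deferred verification but the point of failure. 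For product projections $\pi\colon A\times Y\to Y$ and $\pi'\colon A'\times Y'\to Y'$, a square $(u,v)\colon\pi\to\pi'$ is determined by $v$ together with an arbitrary $w\in\operatorname{Hom}(A\times Y,A')$ (the $A'$-component of $u$), and $\Gamma$ records only $(w\circ\langle\id{A},0\rangle,\,v)$. Faithfulness thus demands that precomposition with $A\to A\times Y$ be injective on $\operatorname{Hom}(A\times Y,A')$, which already fails for vector spaces (where $\operatorname{Hom}(A\oplus Y,A')\cong\operatorname{Hom}(A,A')\oplus\operatorname{Hom}(Y,A')$ and the map is the projection). So with $\mathcal{E}$ read as a full subcategory of the ordinary arrow category, the equivalence cannot be established as you propose, and the statement itself appears to need either a further identification of morphisms in $\mathcal{E}$ or an added hypothesis killing $\operatorname{Hom}(Y,A')$; your proof cannot be completed as written.
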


Let us say that $\mathcal{E}$ is a \emph{torsion theory class} of epimorphisms when $(\mathcal{E},\mathcal{T},\mathcal{F})$ is a torsion theory, and a \emph{rectangular torsion theory class} when this torsion theory is rectangular. The theorem above leads to the following characterization theorems.

\begin{theorem}\label{TheE}
The class of product projections in a pointed category having binary products is a torsion theory class if and only if the category has normal projections, and, if and only if it is a rectangular torsion theory class. 
\end{theorem}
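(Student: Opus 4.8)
The plan is to reduce everything to \thex\ref{TheD}. The first observation is that for $\mathcal{E}$ the class of \emph{all} product projections, the hypothesis occurring in the second half of \thex\ref{TheD} --- that every object of $\mathcal{E}$ be a product projection in $\mathbb{X}$ --- is trivially satisfied. Since $\mathbb{X}$ has binary products by assumption, \thex\ref{TheD} then tells us that the moment $(\mathcal{E},\mathcal{T},\mathcal{F})$ is a torsion theory it is already rectangular; the converse implication is immediate, because a rectangular torsion theory is in particular a torsion theory. This settles the equivalence between being a torsion theory class and being a rectangular torsion theory class, so it only remains to identify either of these with $\mathbb{X}$ having normal projections.

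By the torsion-theory criterion in \thex\ref{TheD}, $(\mathcal{E},\mathcal{T},\mathcal{F})$ is a torsion theory if and only if every product projection in $\mathbb{X}$ is a cokernel in $\mathbb{X}$ and has a kernel in $\mathbb{X}$. I would next note that the kernel part is automatic: in a pointed category with binary products, for the projection $\pi_Y\colon X\times Y\to Y$ the morphism $\langle\id{X},0\rangle\colon X\to X\times Y$ (with $0\colon X\to Y$ the zero morphism) is a kernel of $\pi_Y$, since a morphism $f$ into $X\times Y$ satisfies $\pi_Y\c f=0$ precisely when its $Y$-component vanishes, i.e.\ when $f$ factors --- necessarily uniquely, as $\langle\id{X},0\rangle$ is split monic --- through $\langle\id{X},0\rangle$. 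Thus the criterion simplifies to: every product projection in $\mathbb{X}$ is a cokernel.

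The final step is the standard fact that a morphism which is a cokernel and admits a kernel is in fact the cokernel of that kernel. Indeed, if $\pi_Y=\operatorname{coker}(f)$ for some $f$, then $\pi_Y\c f=0$ forces $f$ to factor through $\ker(\pi_Y)=\langle\id{X},0\rangle$, say $f=\langle\id{X},0\rangle\c g$; hence any $h$ with $h\c\langle\id{X},0\rangle=0$ also satisfies $h\c f=0$ and therefore factors uniquely through $\pi_Y$, while $\pi_Y\c\langle\id{X},0\rangle=0$. So $\pi_Y=\operatorname{coker}(\langle\id{X},0\rangle)$, which is exactly the statement that the isomorphism theorem $X\times Y/X\approx Y$ holds, i.e.\ that $\mathbb{X}$ has normal projections in the sense of \cite{Janelidze03}. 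Chaining the three steps gives all of the asserted equivalences.

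The argument is essentially bookkeeping on top of \thex\ref{TheD}; the only points needing a little care are that kernels and cokernels here are taken relative to the pointed structure of $\mathbb{X}$ (and not relative to the null ideal defining the torsion theory on $\mathcal{E}$), and that the ``cokernel of its kernel'' reformulation is lined up with the precise definition of normal projections from \cite{Janelidze03}. I do not anticipate a genuine difficulty.
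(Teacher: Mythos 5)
Your proof is correct and follows exactly the route the paper intends: the paper gives no separate argument for this theorem, presenting it as a direct consequence of \thex\ref{TheD}, and your three steps (rectangularity is automatic for the class of all product projections, the kernel $\langle\id{X},0\rangle$ exists for free, and ``cokernel with a kernel'' upgrades to ``cokernel of its kernel'') are precisely the bookkeeping needed to identify the remaining condition with normal projections in the sense of \cite{Janelidze03}.
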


\begin{theorem}\label{TheF}
The class of split epimorphisms in a pointed regular category is a torsion theory class if and only if the category is normal, and, if and only if the class of regular epimorphisms is a torsion theory class.
\end{theorem}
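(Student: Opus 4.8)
The plan is to push everything through Theorem~\ref{TheD}, rephrasing each of the three conditions as a statement about normal epimorphisms, after which only one short categorical argument remains.

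Write $\mathcal{E}_{\mathrm{s}}$ for the full subcategory of the arrow category of $\mathbb{X}$ on split epimorphisms, and $\mathcal{E}_{\mathrm{r}}$ for the one on regular epimorphisms. First I would check that each satisfies the standing hypotheses of this section: isomorphisms are split epimorphisms; a morphism $X\to 0$ into a zero object is split by the unique morphism $0\to X$ (as $0$ is initial), hence is a split epimorphism; a split epimorphism $e$ is the coequalizer of $(\mathrm{id},se)$, hence a regular epimorphism; and regular epimorphisms are epimorphisms. So Theorem~\ref{TheD} applies to $\mathcal{E}_{\mathrm{s}}$ and to $\mathcal{E}_{\mathrm{r}}$. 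Since a regular category is finitely complete, every morphism of $\mathbb{X}$ has a kernel, and hence the criterion of Theorem~\ref{TheD} simplifies to: for $\mathcal{E}\in\{\mathcal{E}_{\mathrm{s}},\mathcal{E}_{\mathrm{r}}\}$, the triple $(\mathcal{E},\mathcal{T},\mathcal{F})$ is a torsion theory if and only if every object of $\mathcal{E}$ is a cokernel in $\mathbb{X}$, i.e.\ a normal epimorphism.

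Two of the equivalences follow at once. By definition (\cite{Janelidze10}) a normal category is a pointed regular category in which every regular epimorphism is normal, so $(\mathcal{E}_{\mathrm{r}},\mathcal{T},\mathcal{F})$ is a torsion theory exactly when $\mathbb{X}$ is normal; and $(\mathcal{E}_{\mathrm{s}},\mathcal{T},\mathcal{F})$ is a torsion theory exactly when every split epimorphism of $\mathbb{X}$ is normal. Since split epimorphisms are regular epimorphisms, one implication of the remaining equivalence is trivial; the content is the converse: \emph{if every split epimorphism of $\mathbb{X}$ is normal, then every regular epimorphism is normal.}

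To prove this, let $f\colon X\to Y$ be a regular epimorphism with kernel $k\colon K\to X$. As $fk=0$ and $f$ is epic, it suffices to show that every $g\colon X\to Z$ with $gk=0$ factors through $f$; and since $\mathbb{X}$ is regular, $f$ is the coequalizer of its kernel pair $p_{0},p_{1}\colon R\rightrightarrows X$, so it is enough to prove $gp_{0}=gp_{1}$. The first projection $p_{0}$ is split by the diagonal $\Delta\colon X\to R$, hence is a split epimorphism, hence (by hypothesis, and since kernels exist) is the cokernel of its kernel; and a direct computation identifies $\ker p_{0}$ with $\langle 0,k\rangle\colon K\to R$, since a pair in $R=X\times_{Y}X$ with first component $0$ has its second component in $K$. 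As $gp_{1}\circ\langle 0,k\rangle=gk=0$, the morphism $gp_{1}$ factors through $p_{0}$, say $gp_{1}=\bar h\circ p_{0}$; precomposing with $\Delta$ gives $\bar h=g$, whence $gp_{0}=gp_{1}$, as wanted. I expect this last step to be the only non-formal part of the proof, and within it the delicate point is the identification of $\ker p_{0}$; the rest is routine use of Theorem~\ref{TheD} and of the definition of a normal category.
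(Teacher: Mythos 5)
Your proof is correct. The paper itself gives no argument for Theorem~\ref{TheF} (it is stated as a consequence of Theorem~\ref{TheD}, with the details left to the reader and to the cited literature on normal categories), so there is no official proof to compare against; what you have done is supply exactly the missing content. Your reduction via Theorem~\ref{TheD} is the intended one: both classes satisfy the standing hypotheses of the section, kernels exist because a regular category is finitely complete, so each condition becomes ``every member of the class is a normal epimorphism,'' and the regular-epimorphism case is the definition of a normal category from \cite{Janelidze10}. The genuine mathematical content is then your last step, that normality of all split epimorphisms forces normality of all regular epimorphisms, and your argument for it is sound: for a regular epimorphism $f$ with kernel $k$ and kernel pair $p_0,p_1\colon R\rightrightarrows X$, the projection $p_0$ is split by the diagonal, its kernel is correctly identified as $\langle 0,k\rangle$ (a generalized element of $R$ with first component $0$ has second component factoring through $k$, and $\langle 0,k\rangle$ is monic since $p_1\langle 0,k\rangle=k$ is), and the factorization $gp_1=\bar h p_0$ together with precomposition by $\Delta$ gives $gp_0=gp_1$, so $g$ factors through $f=\operatorname{coeq}(p_0,p_1)$. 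This is the standard argument in the literature on normal and subtractive categories, and it is exactly what the authors are implicitly appealing to.
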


In the dual of the category of pointed sets every regular epimorphism is a product projection, while in any abelian category every split epimorphism is a product projection.

\begin{theorem}\label{TheG}
The class of split epimorphisms in a pointed regular category is a rectangular torsion theory class if and only if every split epimorphism is a product projection. The class of regular epimorphisms is a rectangular torsion theory class if and only if every regular epimorphism is a product projection.
\end{theorem}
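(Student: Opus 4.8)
The plan is to reduce both biconditionals to Theorems~\ref{TheD} and \ref{TheF}. Write $\mathcal{E}$ for the class under consideration (split epimorphisms, or regular epimorphisms). By definition, $\mathcal{E}$ is a rectangular torsion theory class precisely when it is a torsion theory class \emph{and} the associated torsion theory is rectangular, and by Theorem~\ref{TheD}, once $\mathcal{E}$ is a torsion theory class this rectangularity is equivalent to every object of $\mathcal{E}$ being a product projection in $\mathbb{X}$. So the only point that is not a direct appeal to Theorem~\ref{TheD} is to check that the condition ``every object of $\mathcal{E}$ is a product projection'' already forces $\mathcal{E}$ to be a torsion theory class --- which, by Theorem~\ref{TheF}, is the same as $\mathbb{X}$ being a normal category. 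The ``only if'' directions are then immediate and use no normality: if $\mathcal{E}$ is a rectangular torsion theory class then it is in particular a torsion theory class, so Theorem~\ref{TheD} applies and gives that every object of $\mathcal{E}$ is a product projection.

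For the ``if'' directions, assume that every split epimorphism of $\mathbb{X}$ is a product projection (this is implied by the corresponding hypothesis on regular epimorphisms, since split epimorphisms are regular). I would show $\mathbb{X}$ is normal, that is, that every regular epimorphism is the cokernel of its kernel. Let $q\colon A\to B$ be a regular epimorphism, $k\colon K\to A$ its kernel, and $p_1,p_2\colon R\rightrightarrows A$ its kernel pair, so that $q=\operatorname{coeq}(p_1,p_2)$ because $\mathbb{X}$ is regular. The diagonal $\Delta\colon A\to R$ is a common section of $p_1$ and $p_2$, so both legs are split epimorphisms, hence product projections by hypothesis; since $\ker p_1\cong K\cong\ker p_2$, these structures identify $R$ with $A\times K$ in such a way that $p_1$ becomes the first projection $\pi_1$ and $p_2$ becomes a morphism $m\colon A\times K\to A$ --- itself a product projection --- satisfying $m\circ\langle 0,\id{K}\rangle=k$. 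One then verifies that $\operatorname{coeq}(\pi_1,m)$ is precisely $\operatorname{coker}(k)$, so $q$ is a normal epimorphism. Hence $\mathbb{X}$ is normal; Theorem~\ref{TheF} then says the class of split epimorphisms (resp.\ of regular epimorphisms) is a torsion theory class, and Theorem~\ref{TheD} upgrades it to a rectangular one since each of its objects is, by hypothesis, a product projection.

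I expect the main obstacle to be exactly the step that extracts normality of $\mathbb{X}$ from the plain hypothesis. The hypothesis genuinely has to be used: in a general pointed category with binary products the projection $\pi_1\colon X\times Y\to X$ need \emph{not} be the cokernel of $\langle 0,\id{Y}\rangle$ (this already fails in $\mathbf{Set}_{*}$), so $\mathbb{X}$ need not have normal projections. The device in the argument above is that the hypothesis says nothing new when applied to a product projection itself, but becomes informative when applied to the two legs of a kernel pair; the one genuinely non-formal piece is the diagram chase identifying $\operatorname{coeq}(\pi_1,m)$ with $\operatorname{coker}(k)$, for which one uses that $m$ is again a product projection and tracks the section coming from $\Delta$. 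If this reduction turns out to be delicate, an alternative is to isolate it as a separate lemma --- ``in a pointed category with binary products in which every split epimorphism is a product projection, the first projections are cokernels of the second coprojections'' --- after which Theorem~\ref{TheG} becomes a one-line consequence of Theorems~\ref{TheD} and \ref{TheF}.
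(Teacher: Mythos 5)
Your overall architecture is the right one and is what the paper intends (Theorem~\ref{TheG} is left as a consequence of Theorems~\ref{TheD} and~\ref{TheF}): the ``only if'' directions follow from Theorem~\ref{TheD}, the regular-epimorphism case of the ``if'' direction reduces to the split-epimorphism case because split epimorphisms are regular, and the only substantive point is that ``every split epimorphism is a product projection'' forces $\mathbb{X}$ to be normal. It is in your argument for this last implication that there is a genuine gap. Having identified $R\cong A\times K$ so that $p_1=\pi_1$ and $p_2=m$ with $m\circ\langle 0,\id{K}\rangle=k$, you need the hard direction of ``$\operatorname{coeq}(\pi_1,m)=\operatorname{coker}(k)$'', namely that $fk=0$ implies $f\pi_1=fm$. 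But $f\pi_1$ and $fm$ are two morphisms out of the product $A\times K$, and all your data says is that they agree after composing with $\langle 0,\id{K}\rangle$ (both give $0$) and with the common section coming from $\Delta$ (both give $f$). These two morphisms are not jointly epimorphic in a pointed category --- this is exactly the $\mathbf{Set}_{*}$ phenomenon you yourself cite as the obstacle --- so ``tracking the section'' cannot close the argument, and the fact that $m$ is again a product projection only yields $m=\pi_1\psi$ for some automorphism $\psi$ of $A\times K$, which gives no way to compare $f\pi_1$ with $f\pi_1\psi$. As written, the claim $\operatorname{coeq}(\pi_1,m)=\operatorname{coker}(k)$ is equivalent to the normality you are trying to establish, so the key step is circular.

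The implication is nevertheless true, but the hypothesis has to be applied to a new split epimorphism manufactured from $f$, and regularity must be used. Given a split epimorphism $q\colon A\to B$ with kernel $k\colon K\to A$ and $f\colon A\to W$ with $fk=0$, factor $\langle q,f\rangle\colon A\to B\times W$ as a regular epimorphism $p\colon A\to I$ followed by a monomorphism $i\colon I\to B\times W$. Then $q=(\pi_1\circ i)\circ p$, so $\pi_1\circ i\colon I\to B$ is a split epimorphism. Its kernel is $i^{-1}(0\times W)$, which (by pullback-stability of regular epimorphisms) is the image under $p$ of $\langle q,f\rangle^{-1}(0\times W)=\ker q$; this image is $0$ because $\langle q,f\rangle\circ k=\langle 0,0\rangle$. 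By hypothesis $\pi_1\circ i$ is a product projection with zero kernel, hence an isomorphism, and $f=\pi_2\, i\,(\pi_1 i)^{-1}q$ factors through $q$, uniquely since $q$ is an epimorphism. Thus every split epimorphism is a normal epimorphism, and Theorems~\ref{TheF} and~\ref{TheD} finish both sentences of the statement exactly as you describe.
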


\end{document}